\documentclass[pdflatex,sn-aps]{sn-jnl}

\usepackage{graphicx}%
\usepackage{multirow}%
\usepackage{amsmath,amssymb,amsfonts}%
\usepackage{amsthm}%
\usepackage{mathrsfs}%
\usepackage[title]{appendix}%
\usepackage{xcolor}%
\usepackage{textcomp}%
\usepackage{manyfoot}%
\usepackage{booktabs}%
\usepackage{algorithm}%
\usepackage{algorithmic}%
\usepackage{listings}%

\usepackage[english]{babel}
\usepackage{float}
\usepackage{graphicx}
\usepackage{subcaption}
\usepackage{enumerate}
\usepackage{booktabs}
\usepackage{dsfont}
\usepackage{comment}
\usepackage{bm}
\usepackage{microtype}

\usepackage{todonotes}
\usepackage{tablefootnote}

\usepackage{tikz}
\usepackage{pgfplots}
\pgfplotsset{compat=1.17}
\pgfplotsset{minor tick style={draw=none}}
\usepackage{caption}
\usepackage{float}

\allowdisplaybreaks[4]

\numberwithin{equation}{section}
\theoremstyle{thmstyleone}%
\newtheorem{theorem}{Theorem}[section]

\newtheorem{lemma}{Lemma}[section]
\newtheorem{assumption}{Assumption}[section]
\newtheorem{proposition}{Proposition}[section]

\theoremstyle{thmstyletwo}%

\newtheorem{remark}{Remark}[section]

\theoremstyle{thmstylethree}%
\newtheorem{definition}{Definition}%

\def\cL{{\mathcal{L}}}
\def\R{{\mathbb{R}}}

\def\Argmin{\mathop{\rm Arg\,min}}

\definecolor{cb-black}      {RGB}{  0,   0,   0}
\definecolor{cb-blue-green} {RGB}{  0,  073,  073}
\definecolor{cb-green-sea}  {RGB}{  0, 146, 146}
\definecolor{cb-rose}       {RGB}{255, 109, 182}
\definecolor{cb-salmon-pink}{RGB}{255, 182, 119}
\definecolor{cb-purple}     {RGB}{ 73,   0, 146}
\definecolor{cb-blue}       {RGB}{ 0, 109, 219}
\definecolor{cb-lilac}      {RGB}{182, 109, 255}
\definecolor{cb-blue-sky}   {RGB}{109, 182, 255}
\definecolor{cb-blue-light} {RGB}{182, 219, 255}
\definecolor{cb-burgundy}   {RGB}{146,   0,   0}
\definecolor{cb-brown}      {RGB}{146,  73,   0}
\definecolor{cb-clay}       {RGB}{219, 209,   0}
\definecolor{cb-green-lime} {RGB}{ 36, 255,  36}
\definecolor{cb-yellow}     {RGB}{255, 255, 109}

\begin{document}

\title[A single-loop CG-AL framework]{A conditional-gradient-based single-loop augmented Lagrangian method for inequality constrained optimization}


\author[1]{Xiaozhou Wang}\email{xiaozhou.wang@m.scnu.edu.cn}

\author[2]{Ting Kei Pong}\email{tk.pong@polyu.edu.hk}

\author*[3]{Zev Woodstock}\email{woodstzc@jmu.edu}

\affil[1]{School of Mathematical Sciences, South China Normal University, Guangzhou, People's Republic of China}

\affil[2]{Department  of  Applied  Mathematics, The  Hong  Kong  Polytechnic  University, Hong Kong, People's Republic of China}

\affil[3]{Department of Mathematics and Statistics, James Madison University, Harrisonburg, Virginia, USA}

\abstract{We consider the problem of minimizing the sum of a Lipschitz differentiable convex function $f$ and a proper closed convex function $h$ that admits efficient linear minimization oracles, subject to multiple smooth convex inequality constraints. We adapt the classical augmented Lagrangian (AL) method for these problems: in each iteration, our algorithm consists of {\em one step} of the conditional gradient (CG) method applied to the AL function, followed by an update of the dual variable as in classical AL methods with a {\em diminishing} dual stepsize. We study the convergence rate of our algorithm under two standard stepsize rules for the CG method, namely, an open-loop stepsize and the short stepsize, and obtain a convergence rate that matches the best-known complexity for this class of problems. We also establish accelerated rates when $h$ is the indicator function of a uniformly convex set.}

\keywords{Augmented Lagrangian methods, conditional gradient methods, inequality constraints, linear minimization oracles}



\maketitle

\section{Introduction}

Augmented Lagrangian (AL) methods are a classical family of optimization algorithms dating back over a half century \cite{Hest69,Powe69,Rockafellar-1973,Bert76,Kort76}. Over these decades, the connections between AL algorithms and proximal methods have been identified \cite{Rock76} and used to devise approaches to solve optimization problems involving inequality constraints; see, e.g., \cite{Bert82,Iuse99,Marc23} and the references therein. The rich interplay between AL methods and Newton-type algorithms, including single-loop algorithms, also dates back to roughly the same time \cite{Tapi77}.

On the other hand, the interplay between AL algorithms and \emph{Conditional Gradient} (CG, also known as `Frank-Wolfe') methods, i.e., algorithms that rely on linear minimization subproblems instead of proximal minimization subproblems, is notably newer. To the authors' knowledge, the first single-loop AL-type algorithm to rely on linear minimization subproblems was proposed in 2018, with several theoretical developments and CG+AL variants following in recent years \cite{Gidel-2018,Yurt19,Silveti-Molinari-Fadili-2020} (see also \cite[Appendix~A]{Garb23}). A notable advantage of this CG+AL approach is that, particularly in high-dimensional settings, the linear minimization subproblems often require fewer computational resources when compared to traditional proximity operators \cite{Freu16,Comb21LMO,Wood26}. In fact, it is currently an open question as to whether or not linear minimization \emph{always} requires less computation than solving a proximal subproblem \cite{Wood26}. These methods also exhibit a low numerical cost-per-iteration because they are \emph{single-loop} methods, i.e., convergence is proven while only relying on one ``internal'' CG step within each iteration (as compared to double-loop methods that rely on multiple internal CG steps for each outer iteration, e.g., \cite{HeHa15,LiuL19,Kolm21,Mill21}). While the CG+AL approaches mentioned thus-far exemplify important progress for the family of CG+AL algorithms, it appears that there are currently no CG+AL methods that explicitly account for, or leverage the structure of, convex inequality constraints. 

In view of the historical inclusion of inequality constraints for AL methods \cite{Bert76,Kort76,Rock76}, and in the interest of devising a CG+AL method that can leverage the structure of inequality constraints, this work considers the following optimization problem:
\begin{equation}\label{problem}
\min_{{\bm x}\in \R^n} f({\bm x}) + h({\bm x}) \quad   \mbox{s.t.  ${\bm g}(\bm x):=\begin{bmatrix}g_1(\bm x)&\cdots&g_m(\bm x)\end{bmatrix}^T\leq {\bm 0}$},
\end{equation}
where $f:\R^n\rightarrow \R$ is convex and Lipschitz differentiable, $h: \R^n\rightarrow \R\cup \{\infty\}$ is proper, closed and convex,  and  for each $i\in [m]:=\{1,\ldots,m\}$, each function $g_i:\R^n\rightarrow \R$ is differentiable and convex.
It is also assumed that, for any $\bm c\in\R^n$, the problem $\min_{\bm x\in\R^n}\langle \bm c,\bm x\rangle+h(\bm x)$ has a minimizer and can be solved efficiently using a \emph{linear minimization oracle (LMO)}; oftentimes, the function $h$ is used to represent another constraint that is accessible via a linear minimization oracle \cite{Beck-Pauwels-Sabach-2015,Silveti-Molinari-Fadili-2020}; the precise assumptions on \eqref{problem} will be presented in Assumption~\ref{assu-function} below. 
Problem \eqref{problem} appears in a variety of settings in data science and signal processing, e.g., radiotherapy treatment planning, link prediction, covariance estimation, and graph denoising \cite{Rich12,Gidel-2018,LanR21,Woodstock-Pokutta-2025}.

In addition to the aforementioned CG+AL methods \cite{Gidel-2018,Yurt19,Silveti-Molinari-Fadili-2020}, several single-loop CG methods for solving instances of \eqref{problem} have also been proposed \cite{LanR21,Asga24,Gian25,Woodstock-Pokutta-2025}. Although \eqref{problem} is technically a special case of the problems considered in \cite{Gian25,Woodstock-Pokutta-2025}, neither works explicitly account for inequality constraints. On the other hand, while \cite{Asga24,LanR21} account for inequality constraints, their algorithms and analysis differ from AL approaches.
To the best of our knowledge, both works \cite{Asga24,LanR21} exhibit the fastest convergence rate for solving \eqref{problem} with a single-loop CG algorithm -- guaranteeing that after $k$ iterations, both primal suboptimality and feasibility violation are bounded by $\mathcal{O}(1/\sqrt{k})$. 
Moreover, when $h={\rm Ind}_{\mathcal{C}}$ is an indicator function with $\mathcal{C}$ being compact and uniformly convex and ${\bm g}\equiv {\bm 0}$, the problem \eqref{problem} reduces to an important setting where the vanilla CG method applies. In this case, it is well established that the vanilla CG method enjoys accelerated convergence rates under standard stepsize rules, such as the open-loop stepsize and the short stepsize rules; see, e.g., \cite{Garber-Hazan-2015,Kerdreux-dAspremont-Pokutta-2021,Grimmer-Liu-2026,Halbey-et-2026}. However, it remains unknown whether LMO-based methods can achieve similar acceleration when $h={\rm Ind}_{\mathcal{C}}$ for a compact and uniformly convex set and ${\bm g}$ is nonzero.
This begs the following questions:

\begin{enumerate}[\rm (i)]
 \item Can a CG+AL method  explicitly leverage the structure of convex inequality constraints and match the current state-of-the-art convergence rate $\mathcal{O}(1/\sqrt{k})$? 
 \item  If, in addition, $h={\rm Ind}_{\mathcal{C}}$ for a compact and uniformly convex set, can we establish an improved convergence rate better than $\mathcal{O}(1/\sqrt{k})$?
\end{enumerate}

In this paper, we provide affirmative answers to the questions above.
Our contributions are as follows. 
\begin{enumerate}[\rm (i)]
\item Firstly, we propose the first single-loop CG+AL method for \eqref{problem}. In each iteration, our algorithm consists of {\em one step} of CG method applied to the AL function, followed by an update of the dual variable as in classical AL methods with a {\em diminishing} dual stepsize. As is standard with CG methods, our analysis considers both an open-loop stepsize schedule, as well as an adaptive stepsize schedule (specifically, the short stepsize schedule). This work proves that, for an open-loop stepsize schedule, the convergence rate of this CG+AL method can get arbitrarily close to $\mathcal{O}(1/\sqrt{k})$; and for the short stepsize schedule, the convergence rate is $\mathcal{O}(1/\sqrt{k})$, matching the current state-of-the-art convergence rate.

\item The second major contribution is the analysis of the important case when, additionally, $h$ is the indicator function of a uniformly convex compact set. This analysis in particular reveals improved rates for the case when the constraint set is strongly convex: (A) when using short stepsize, the primal suboptimality and feasibility violation are both bounded by $\mathcal{O}(1/k^{2/3})$, and (B) when using open-loop stepsizes, convergence rates can get arbitrarily close to $\mathcal{O}(1/k)$. The mismatch of these rates is atypical for CG methods; however, to our knowledge, such an improvement beyond $\mathcal{O}(1/\sqrt{k})$ has not been demonstrated to-date with any single-loop CG algorithm for solving \eqref{problem}.
\end{enumerate}

The rest of the paper is organized as follows. In Section~\ref{sec-pre}, we present some notation, preliminaries, the blanket assumptions on \eqref{problem}, and two important lemmas that will be used in our convergence analysis. Our CG-based single-loop AL method is presented in Section~\ref{sec-alg}, where we also establish a general convergence result under some technical assumptions. Equipped with this general convergence result, in Section~\ref{sec4}, we further
derive global convergence and complexity results under both the open-loop stepsize and short stepsize schedules. We study the case when $h={\rm Ind}_{\mathcal{C}}$ with $\mathcal{C}$ being compact and uniformly convex in Section~\ref{sec-con-ss}. Section~\ref{sec-numerics} is devoted to numerical experiments.

\section{Notation and preliminaries}\label{sec-pre}

In this paper, we use $\R^n$ to denote the $n$-dimensional Euclidean space and $\R^n_+$ to denote the nonnegative orthant of $\R^n$. Vectors in $\R^n$ will be in boldface, while scalars will use normal typeface.  For example, for ${\bm x}\in \R^n$, we write ${\bm x}=[x_1\ \cdots\ x_n]^T$ with each $x_i\in \R$. In particular, we define ${\bm 0}:=[0\ \cdots\ 0]^T$.  The $q$-norm ($q\in [1,\infty)$) of ${\bm x}$ is denoted by $\|{\bm x}\|_q=\sqrt[q]{|x_1|^q+\cdots + |x_n|^q}$; for notational simplicity, the $2$-norm of ${\bm x}$ is denoted by $\|{\bm x}\|$. We also write $[{\bm x}]_+:=\max\{{\bm x}, {\bm 0}\}$, where the maximum is taken componentwise. For ${\bm x}$ and ${\bm y }\in \R^n$, their inner product is denoted by $\langle {\bm x}, {\bm y}\rangle$. We also write ${\bm x}\geq {\bm y}$ to denote $x_i\geq y_i$ for every $i$. For a positive integer $m$, we define $[m]:=\{1,\ldots,m\}$.

Vector-valued functions are in boldface while scalar-valued functions use normal typeface; that is, we write ${\bm \phi}:=[\phi_1\ \cdots\ \phi_m]^T$ with each $\phi_i:\R^n\rightarrow \R$. 
An extended-real-valued function $\varphi : \R^n\rightarrow \R\cup\{\infty\}$ is said to be proper if its domain ${\rm dom}\, \varphi:=\{ {\bm x}\in \R^n : \varphi({\bm x})<\infty\}$ is nonempty. The function $\varphi$ is said to be closed if it is lower-semicontinuous. For a proper convex function $\varphi$, its $\epsilon$-subdifferential (for some $\epsilon\geq 0$) at an ${\bm x}\in \R^n$ is
\begin{align*}
\partial_\epsilon \varphi({\bm x}):=\{ {\bm \xi} \in \R^n : \varphi({\bm y})\geq \varphi({\bm x}) +\langle {\bm \xi}, {\bm y}-{\bm x}\rangle -\epsilon \,\,\mbox{for all ${\bm y}\in \R^n$}\};
\end{align*}
clearly, the $\epsilon$-subdifferential reduces to the subdifferential $\partial \varphi$ when $\epsilon=0$. 

For a nonempty convex set $\mathcal{C}\subseteq \R^n$,  the
indicator function is denoted by ${\rm Ind}_{\mathcal{C}}$, i.e., ${\rm Ind}_{\mathcal{C}}({\bm x})=0$ if ${\bm x}\in \mathcal{C}$ and  ${\rm Ind}_{\mathcal{C}}({\bm x})=\infty$ if ${\bm x}\notin \mathcal{C}$.
The normal cone of $\mathcal{C}$ at ${\bm x} \in \mathcal{C}$ is
\begin{align*}
\mbox{$\mathcal{N}_{\mathcal{C}}(\bm x):=\partial ({\rm Ind}_{\mathcal{C}})(\bm x)=\{ {\bm v}\in \R^n : \langle {\bm v}, {\bm y}-{\bm x} \rangle\leq 0$ \,\,for all ${\bm y}\in \mathcal{C}$\}}.
\end{align*}

\subsection{The augmented Lagrangian function}

By adding slack variable $
{\bm s}\in \R^m_+$ for the constraint ${\bm g}(\bm x)\leq {\bm 0}$, an AL function of \eqref{problem} is defined as
\begin{align}\label{lagrangian-tilde}\textstyle
\tilde{\cL}_\lambda({\bm x},{\bm s},{\bm z}):=f({\bm x})+ h({\bm x}) + {\rm Ind}_{\R^m_+}({\bm s})+\langle {\bm z},  {\bm g}({\bm x})+{\bm s}\rangle +\frac{\lambda}{2}\| {\bm g}({\bm x})+{\bm s}\|^2,
\end{align}
where ${\bm z}\in \R^m$ and $\lambda>0$ is the penalty parameter.
Minimizing $\tilde{\cL}_\lambda$ with respect to $s_i$,   the optimal $s_i$ is given by
\begin{align*}
s_i=\max\left\{ 0, - g_i(\bm x)-z_i/\lambda\right\}.
\end{align*}
  Letting $\psi_t :\R^2\rightarrow \R$ with $t>0$ be 
\begin{equation}\label{def-orig-psi}
\psi_t(u,v):= \frac{t}2\left(\left[u + \frac{v}{t}\right]_+^2 - \left(\frac{v}{t}\right)^2\right)
=
\begin{cases}
	uv +\frac{t}{2} u^2 & \mbox{if $t u+v\geq 0$}, \\
	-\frac{v^2}{2t} & \mbox{if $t u+v<0$},
\end{cases} 
\end{equation}
and substituting every optimal $s_i$ into \eqref{lagrangian-tilde} with $i\in [m]$,
we obtain the classic AL function for \eqref{problem} as follows:
\begin{align}\label{lagrangian}
\cL_\lambda({\bm x},{\bm z}):= f({\bm x})+\Psi_\lambda({\bm x},{\bm z})+h({\bm x}),
\end{align}
where $\Psi_{\lambda}:\R^n\times \R^m\rightarrow \R$ is defined as
\begin{equation}\label{def-psi}
\Psi_{\lambda}({\bm x},{\bm z}):= \sum_{i=1}^m \psi_\lambda(g_i({\bm x}), z_i) 
=  
\frac{\lambda}{2}\sum_{i=1}^m\bigg(\bigg[g_i(\bm x)+\frac{z_i}{\lambda}\bigg]_+^2-\bigg(\frac{z_i}{\lambda}\bigg)^2\bigg),
\end{equation}
which is  differentiable with respect to ${\bm x}$ and the gradient is given by
\[\textstyle
\nabla_{\bm x} \Psi_\lambda({\bm x},{\bm z})=\sum_{i=1}^m[\lambda g_i(\bm x)+z_i]_+\nabla g_i(\bm x).
\]
Denoting by $F_{\lambda}({\bm x},{\bm z}):=f({\bm x})+\Psi_\lambda({\bm x},{\bm z})$ the smooth part of $\mathcal{L}_\lambda$ in \eqref{lagrangian}, we can rewrite $\cL_\lambda$ as
\begin{align}\label{lagrangian-F}
	\cL_\lambda({\bm x},{\bm z})= F_\lambda({\bm x},{\bm z})+h({\bm x}).
\end{align}

We consider the following assumption on \eqref{problem} in this paper.
\begin{assumption}[Assumptions for functions $f$, $h$ and ${\bm g}$]\label{assu-function} Consider \eqref{problem}.
\begin{enumerate}[{\rm (i)}]
\item The function $f:\R^n\to \R$ is convex and $L_f$-Lipschitz differentiable for some $L_f > 0$.
\item  The function $h:\R^{n}\rightarrow (-\infty,+\infty]$ is proper, closed, convex, and has a bounded domain ${\rm dom}\, h$ such that
    \begin{align*}
    \sup_{{\bm x},{\bm x}'\in {\rm dom}\, h}\|{\bm x}- {\bm x}'\|\leq D<+\infty  \quad \mbox{for some positive constant $D$}.
    \end{align*}
Moreover,  for any fixed ${\bm u}\in \R^{n}$, a minimizer of the problem $\min_{{\bm v}\in \R^n} \langle  {\bm v}, {\bm u}\rangle+h(\bm v)$ exists and can be computed efficiently.
    \item For each $i\in [m]$, the function $g_i:\R^n\rightarrow \R$ is  convex differentiable and there exist positive constant $L_{g_i}$ such that
\begin{align*}
    	 \|\nabla g_i(\bm x)-\nabla g_i({\bm x}')\| \leq L_{g_i} \|{\bm x}-{\bm x}'\|  \quad 
    	 \forall \, {\bm x},{\bm x}' \in {\rm dom}\,h.
    	\end{align*}
    	\item Problem \eqref{problem} has a KKT point $({\bm x}^*,{\bm z}^*)$, i.e.,
    	\begin{equation}\label{kkt}
    		\begin{split}
    	&	\textstyle {\bm 0}\in \nabla f({\bm x}^*)+\partial h({\bm x}^*)+  \sum_{i=1}^m z_i^* \nabla g_i({\bm x}^*),   \\
    	&	{\bm g}({\bm x}^*)\leq {\bm 0}, \, {\bm z}^*\geq {\bm 0},  \, \langle {\bm z}^*, {\bm g}({\bm x}^*)\rangle= 0. \\
    		\end{split}
    	\end{equation}
\end{enumerate}
\end{assumption}

\begin{remark}\label{rem-Bi}
	Under Assumption \ref{assu-function}, for each $i\in [m]$ there exists $B_i > 0$ such that
	\begin{align*}
	  \|\nabla g_i(\bm x) \| \leq B_i  \quad \mbox{and}  \quad 	|g_i({\bm x})-g_i({\bm x}')|\leq B_i \|{\bm x}-{\bm x}'\| \quad \forall\, {\bm x},{\bm x}' \in {\rm dom}\,h,
	\end{align*}
	where the first inequality follows from the continuity of $\nabla g_i$ and the boundedness of ${\rm dom}\, h$, 
	and the second inequality follows from the mean value theorem. 
	\end{remark}

\begin{lemma}\label{lemma-pre}
		Under Assumption~\ref{assu-function}, the following statements hold.
		\begin{enumerate}[\rm (i)]
			\item {\rm (\cite[Lemma 1]{Xu-2021a}).} 
            \label{lemma-pre-1}
            The AL function $\mathcal{L}_\lambda$ given in \eqref{lagrangian} is convex with respect to ${\bm x}$ and concave with respect to ${\bm z}$. 
		\item {\rm (\cite[Lemma 1]{Xu-2021b}).}
         \label{lemma-pre-2}
         For any $({\bm x},{\bm z})$ with ${\bm g}({\bm x})\leq {\bm 0}$, it holds that $\Psi_\lambda({\bm x},{\bm z})\leq 0$.
			\item 
             \label{lemma-pre-3}
             For any KKT point  $({\bm x}^*,{\bm z}^*)$ of \eqref{problem}, it holds that $\Psi_\lambda({\bm x}^*,{\bm z}^*)= 0$.
			\item {\rm (\cite[Lemma 2]{Xu-2021b}).} 
             \label{lemma-pre-4}
             For any fixed ${\bm z}$, the function $\Psi_\lambda(\cdot,{\bm z})$ is continuously differentiable and satisfies that for any ${\bm x}$, ${\bm x}'\in {\rm dom}\, h$
			\begin{equation}\label{lip-psi}
	\|\nabla_{\bm x} \Psi_\lambda({\bm x},{\bm z}) -  \nabla_{\bm x} \Psi_\lambda({\bm x}',{\bm z})\| 
		\leq \underbrace{\sum_{i=1}^m(\lambda B_i^2 + L_{g_i} [\lambda g_i({\bm x})+z_i]_+)}_{=:L_{\Psi}({\bm x},{\bm z})}\|{\bm x}-{\bm x}'\|,
			\end{equation}
			where $B_i$ and $L_{g_i}$ are given in  Remark \ref{rem-Bi} and  Assumption \ref{assu-function}, respectively.
            In addition, it holds that 
            \begin{equation}\label{lip-bound}
            L_{\Psi}({\bm x},{\bm z})=\mathcal{O}(\max\{\lambda,\|\bm z\|\}).
            \end{equation}
			\end{enumerate}
		\end{lemma}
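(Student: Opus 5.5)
Parts (i), (ii) and the first half of (iv) are quoted from \cite{Xu-2021a,Xu-2021b}, so I would cite those and only write out part (iii) and the bound \eqref{lip-bound}. For completeness I would also sketch (ii), since the same calculation gives (iii). Both rest on the closed form \eqref{def-orig-psi} of $\psi_t$.

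\emph{Part (ii).} Fix $i$ and write $u=g_i({\bm x})\le 0$, $v=z_i$. If $\lambda u+v<0$, then $\psi_\lambda(u,v)=-v^2/(2\lambda)\le 0$. If $\lambda u+v\ge 0$, then $v\ge -\lambda u\ge 0$, so $uv\le 0$. In this case I need $uv+\frac{\lambda}{2}u^2=u\,(v+\frac{\lambda}{2}u)\le 0$. This holds because $u\le 0$ and $v+\frac{\lambda}{2}u\ge v+\lambda u\ge 0$, using $u\le 0$. Summing over $i$ gives $\Psi_\lambda\le 0$.

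\emph{Part (iii).} Take a KKT point and fix $i$, setting $u=g_i({\bm x}^*)$ and $v=z_i^*$. The KKT conditions \eqref{kkt} give $u\le 0$, $v\ge 0$ and $\sum_i z_i^*g_i({\bm x}^*)=0$. Since every term $z_i^*g_i({\bm x}^*)$ is nonpositive and they sum to zero, each $u v=0$. I then split into two cases.
\begin{enumerate}[\rm (a)]
\item If $v>0$, then $u=0$, so $\lambda u+v=v\ge 0$ and $\psi_\lambda=uv+\frac{\lambda}{2}u^2=0$.
\item If $v=0$, then either $\lambda u+v=\lambda u<0$, giving $\psi_\lambda=-0/(2\lambda)=0$, or $u=0$, giving $\psi_\lambda=0$.
\end{enumerate}
Summing over $i$ yields $\Psi_\lambda({\bm x}^*,{\bm z}^*)=0$.

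\emph{The bound \eqref{lip-bound}.} Since ${\rm dom}\,h$ is bounded and each $g_i$ is continuous, $|g_i({\bm x})|\le G$ on ${\rm dom}\,h$ for some constant $G$. Then $[\lambda g_i({\bm x})+z_i]_+\le \lambda G+|z_i|\le \lambda G+\|{\bm z}\|$. This gives
\[
L_\Psi({\bm x},{\bm z})\le \lambda\sum_i B_i^2+\sum_i L_{g_i}(\lambda G+\|{\bm z}\|)\le C\max\{\lambda,\|{\bm z}\|\}
\]
with $C$ independent of $\lambda$, ${\bm z}$ and ${\bm x}\in{\rm dom}\,h$.

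None of these steps is a real obstacle. The only care needed is reading the bound as uniform over ${\bm x}\in{\rm dom}\,h$, which is exactly what the boundedness of $g_i$ on ${\rm dom}\,h$ provides.
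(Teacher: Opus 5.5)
Your proposal is correct and follows essentially the same route as the paper. Part (iii) goes through the closed form of $\psi_\lambda$ plus complementary slackness, and \eqref{lip-bound} through the boundedness of $g_i$ on ${\rm dom}\,h$. The paper bounds the ${\bm z}$-term by $\sqrt{m}\max_i L_{g_i}\|{\bm z}\|$ where you use $\sum_i L_{g_i}\|{\bm z}\|$, which changes nothing. Your sketch of (ii), which the paper only cites, is also correct, and it rightly does not need ${\bm z}\ge{\bm 0}$.
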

		\begin{proof}
			Item (iii) follows directly from the definition of $\Psi_\lambda$ in \eqref{def-psi} and the fact that $({\bm x}^*,{\bm z}^*)$ satisfies \eqref{kkt}.
          To obtain the last assertion in item  (iv), notice that 
            \[\textstyle
            L_{\Psi}({\bm x},{\bm z})\leq\lambda	\sum_{i=1}^m (B_i^2+L_{g_i} |g_i({\bm x})|)+ \|{\bm z}\| \sqrt{m}\max_i L_{g_i}=\mathcal{O}(\max\{\lambda,\|\bm z\|\}),
            \]
            where the equality follows from the boundedness of ${\rm dom}\, h$ and the continuity of ${\bm g}$.
			\end{proof}

\subsection{Two difference inequalities}

In this subsection, we analyze two difference inequalities, which are the central tools for our convergence analysis. Analogous inequalities have been studied in the literature; see \cite[Lemma 4.4]{Bach-2015} and \cite[Lemma A.1]{Kerdreux-dAspremont-Pokutta-2021}. The difference inequality \eqref{prop_phi1-1} we study below is different from the one in \cite[Lemma 4.4]{Bach-2015} as they only considered $(\tau_k,\beta_k)=(\Theta(1/k),\Theta(1/k^2))$, while the difference inequality \eqref{phi_relation1} we study below differs from the one in \cite[Lemma A.1]{Kerdreux-dAspremont-Pokutta-2021} since we have an additional term of $\gamma_k$ and consider, more generally, an $\eta\in [0.5,1)$. The inequalities \eqref{prop_phi1-1} and \eqref{phi_relation1} are instrumental for our convergence analysis under the open-loop stepsize and the short stepsize, respectively.

\begin{proposition}\label{prop_phi1}
    Let $\{\tau_k\}$ and $\{\beta_k\}$ be  positive nonincreasing sequences satisfying $\tau_k =\Theta(k^{-t_1})$ and $\beta_k =\Theta(k^{-t_2})$ for some $t_1\in (0,1)$ and $t_2>t_1$. If $\{\phi_k\}$ is a nonnegative sequence satisfying 
    \begin{align}\label{prop_phi1-1}
    \phi_{k+1}\le (1-\tau_k)\phi_k + \beta_k
    \end{align}
    for all $k\ge 0$, then it holds that $\phi_k = \mathcal{O}(k^{-(t_2-t_1)})$.
\end{proposition}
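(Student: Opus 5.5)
The plan is an induction with an eventually-valid bound of the form $\phi_k\le C k^{-s}$, where $s:=t_2-t_1>0$. Fix constants $0<c_1\le C_1$ and $C_2>0$ with $c_1k^{-t_1}\le \tau_k\le C_1k^{-t_1}$ and $\beta_k\le C_2k^{-t_2}$ for all $k\ge 1$. Since $\tau_k\to 0$, there is $k_0$ with $\tau_k\le 1$ for $k\ge k_0$. Then $1-\tau_k\ge 0$ in \eqref{prop_phi1-1}, so the recursion is monotone in $\phi_k$. For $k<k_0$ the finitely many values $\phi_k$ are finite, so it suffices to prove $\phi_k\le Ck^{-s}$ for all $k\ge K$, for some $K\ge k_0$ and some constant $C$.

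For the inductive step, assume $\phi_k\le Ck^{-s}$ with $k\ge K$. Then \eqref{prop_phi1-1} gives
\[
\phi_{k+1}\le (1-c_1k^{-t_1})Ck^{-s}+C_2k^{-t_2}.
\]
I want this to be at most $C(k+1)^{-s}$. By the mean value theorem, $(k+1)^{-s}\ge k^{-s}-sk^{-s-1}$, so it suffices to have
\[
Ck^{-s}-Cc_1k^{-s-t_1}+C_2k^{-t_2}\le Ck^{-s}-Csk^{-s-1},
\]
that is, using $s+t_1=t_2$,
\[
C\bigl(c_1k^{-t_2}-sk^{-s-1}\bigr)\ge C_2k^{-t_2}.
\]
Dividing by $k^{-t_2}$ turns this into $C\bigl(c_1-sk^{t_1-1}\bigr)\ge C_2$.

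This is where $t_1<1$ is used, and it is the only delicate point. Since $k^{t_1-1}\to 0$, I choose $K\ge k_0$ so large that $sk^{t_1-1}\le c_1/2$ for all $k\ge K$. It then suffices to take $C\ge 2C_2/c_1$.

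For the base case, I also enlarge $C$ so that $C\ge \phi_K K^{s}$. With $C:=\max\{2C_2/c_1,\ \phi_KK^s\}$ the induction closes and gives $\phi_k\le Ck^{-s}$ for all $k\ge K$. Finally, $C$ is enlarged once more to cover the finitely many indices $1\le k<K$, which yields $\phi_k=\mathcal{O}(k^{-(t_2-t_1)})$.

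Monotonicity of $\{\tau_k\}$ and $\{\beta_k\}$ is not needed beyond what the $\Theta$ bounds already provide.
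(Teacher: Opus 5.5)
Your argument is correct, and it takes a slightly different route from the paper's. The paper does not induct on the target bound itself. It rescales to $\hat\phi_k:=\phi_k(k+1)^{t_2}\tau_k$, a weight of order $\tau_k/\beta_k$. It then combines $(1-\tau_k)(1+1/(k+1))^{t_2}\le 1-0.5\tau_k$ for large $k$ (this is where $t_1<1$enters) with $\tau_{k+1}/\tau_k\le 1$. This gives the averaging recursion $\hat\phi_{k+1}\le(1-\hat\tau_k)\hat\phi_k+\varpi\hat\tau_k$ with $\hat\tau_k=0.5\tau_k$, and an induction shows $\hat\phi_k\le\max\{\hat\phi_{k_0},\varpi\}$.

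Your guess-and-verify induction on $\phi_k\le Ck^{-s}$ with $s=t_2-t_1$ does the same job more directly. You close it with the mean value bound $(k+1)^{-s}\ge k^{-s}-sk^{-s-1}$ and the fact that $k^{t_1-1}\to 0$. Because your weight is a pure power rather than one involving $\tau_k$, the ratio $\tau_{k+1}/\tau_k$ never appears. So you really only use the eventual lower bound on $\tau_k$, the upper bound on $\beta_k$, and $\tau_k\le 1$ eventually. By contrast, the paper's step that discards $\tau_{k+1}/\tau_k$ genuinely relies on $\{\tau_k\}$ being nonincreasing.

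What the paper's rescaling buys in exchange is that it makes the mechanism $\phi_k=\mathcal{O}(\beta_k/\tau_k)$ explicit. It also reduces everything to a standard fact: a sequence obeying such an averaging recursion with $\hat\tau_k\in(0,1)$ never exceeds the larger of its starting value and the forcing level $\varpi$.
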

\begin{proof}
As $\beta_k =\Theta(k^{-t_2})$, we have $\beta_k\leq {c_1/(k+1)^{t_2}}$ for some $c_1>0$. 
Observe that
\begin{align*}
(1-\tau_k)(1+1/(k+1))^{t_2}=(1-\tau_k)(1+t_2/(k+1)+o(k^{-1}))=1-\tau_k +\mathcal{O}(k^{-1}),
\end{align*}
where the first equality follows from the fact that $(1+x)^{t_2} = 1+t_2 x + o(x)$ for all sufficiently small $x>0$.
The above display together with $\tau_k=\Theta(k^{-t_1})$ with $t_1\in (0,1)$ implies that there exists a positive integer $k_0$ such that for any $k\geq k_0$,
\begin{align}\label{prop_phil-1}
0< (1-\tau_k)(1+1/(k+1))^{t_2}\leq 1-0.5\tau_k \quad \mbox{and} \quad (1+1/(k+1))^{t_2}\leq 1+2 t_2,
\end{align}
where for the second relation we use the fact that $(1+x)^{t_2} \leq 1 +2 t_2 x$ for all sufficiently small $x>0$.
Let $\hat{\phi}_k:=\phi_k (k+1)^{t_2} \tau_k$.  
It follows that for any $k\geq k_0$
\begin{align}
\hat{\phi}_{k+1}=&\phi_{k+1} (k+2)^{t_2}\tau_{k+1} \overset{\rm (a)}{\leq} [(1-\tau_k)\phi_k + c_1(k+1)^{-{t_2}}](k+2)^{t_2}\tau_{k+1} \notag \\
= & (\phi_k(k+1)^{t_2}\tau_k) (1-\tau_k)(1+1/(k+1))^{t_2} (\tau_{k+1}/\tau_k)\notag \\
&+c_1(1+1/(k+1))^{t_2}(\tau_{k+1}/\tau_k)\tau_{k} \notag\\
\overset{\rm (b)}{\leq} & \hat{\phi}_k(1-\tau_k)(1+1/(k+1))^{t_2}  +c_1 (1+1/(k+1))^{t_2} \tau_k\notag \\
\overset{\rm (c)}{\leq} &\hat{\phi}_k(1-0.5\tau_k) + c_1(1+2t_2) \tau_k \overset{\rm (d)}{=}\hat{\phi}_k (1-\hat{\tau}_k)+\varpi \hat{\tau_k},\label{prop-phi1-2}
\end{align}
where (a) holds because $\phi_{k+1}\le (1-\tau_k)\phi_k + \beta_k$ and 
 $\beta_k\leq {c_1/(k+1)^{t_2}}$, 
 (b) follows from the definition of $\hat{\phi}_k$ and the facts that $\tau_k$ is nonincreasing and $\tau_k < 1$ (see \eqref{prop_phil-1}),
 (c) follows from \eqref{prop_phil-1},
 and (d) follows upon setting $\hat{\tau}_k:=0.5\tau_k$ and $\varpi:=2c_1(1+2t_2)$.

Since $\hat\tau_k =0.5\tau_k$, from the first inequality in \eqref{prop_phil-1}, we have that $1-\hat\tau_k> 0$ for all $k\ge k_0$.
We will finish the proof by showing that 
    $\sup_{k\ge k_0}\hat{\phi}_k \le \max\{\hat{\phi}_{k_0},\varpi\} =: M$.  
    We proceed by induction. Clearly, $\hat{\phi}_{k_0}\leq M$. Now suppose that $\hat{\phi}_l\le M$ for some $l\ge k_0$. Since $1-\hat{\tau}_l> 0$, we have from \eqref{prop-phi1-2}
    \[
    \hat{\phi}_{l+1} \le (1-\hat{\tau}_l)\hat{\phi}_l + \varpi\hat{\tau}_l \le (1-\hat{\tau}_l)M + \varpi \hat{\tau}_l \le (1-\hat{\tau}_l)M +  \hat{\tau}_l M = M,
    \]
    where the first inequality follows from \eqref{prop-phi1-2}. 
    This completes the induction argument. The desired conclusion now follows from the boundedness of $\{\hat{\phi}_k\}$.
\end{proof}

\begin{proposition}\label{prop_phi2}
    Let $\{\gamma_k\}$ be a positive nonincreasing sequence, $\eta\in[0.5,1)$ and $\mu\in (0,1]$. Suppose that
    \begin{equation}\label{condition_for_gamma}
       \lim_{k\to \infty}\gamma _k = 0\ \ {and}\ \ \sup_{k} \gamma_k^{-1}(\gamma^{1/(1+\mu)}_{k} - \gamma^{1/(1+\mu)}_{k+1}) < \infty.
    \end{equation}
    If $\{\phi_k\}$ is a nonnegative sequence satisfying
    \begin{equation}\label{phi_relation1}
\phi_{k+1}\le \phi_k \max\{\eta,1-\phi_k^\mu\} + \gamma_k\ \ \ \ \forall k\ge 0,
    \end{equation}
    then $\phi_k = \mathcal{O}(\gamma_k^{1/(1+\mu)})$.
\end{proposition}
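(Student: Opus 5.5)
We prove that $\phi_k\le C\gamma_k^{1/(1+\mu)}$ for all large $k$ by induction, with a constant $C$ fixed below, using a case split on the size of $\phi_k$. Write $a_k:=\gamma_k^{1/(1+\mu)}$, so that $\gamma_k=a_k^{1+\mu}$ and $a_k\downarrow 0$. By \eqref{condition_for_gamma}, there is $\kappa<\infty$ with $a_k-a_{k+1}\le\kappa\gamma_k=\kappa a_k^{1+\mu}$, that is,
\[
a_{k+1}\ge a_k\bigl(1-\kappa a_k^{\mu}\bigr).
\]
This is the only way the technical condition on $\{\gamma_k\}$ enters: the target bound decays slowly enough that the recursion can keep pace.

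\textbf{Case 1: $\phi_k$ is small.} Suppose $\phi_k\le c\,a_k$ for a small fixed $c>0$. Since $\max\{\eta,1-\phi_k^\mu\}\le 1$, the recursion \eqref{phi_relation1} gives
\[
\phi_{k+1}\le c\,a_k+a_k^{1+\mu}\le (c+a_k^\mu)\,a_k .
\]
For large $k$ we have $a_k^\mu$ small and $a_k/a_{k+1}\le 1/(1-\kappa a_k^\mu)\le 2$. Hence $\phi_{k+1}\le 2(c+a_k^\mu)\,a_{k+1}\le C a_{k+1}$, provided $C\ge 2c+1$ (say).

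\textbf{Case 2: $c\,a_k<\phi_k\le C a_k$.} In this range the contraction factor is
\[
\max\{\eta,1-\phi_k^\mu\}\le \max\{\eta,\,1-c^\mu a_k^{\mu}\}.
\]
For large $k$, $c^\mu a_k^\mu\le 1-\eta$, so the factor is at most $1-c^\mu a_k^\mu$. This gives
\[
\phi_{k+1}\le C a_k\bigl(1-c^\mu a_k^\mu\bigr)+a_k^{1+\mu}=a_k\bigl(C-(Cc^\mu-1)a_k^\mu\bigr).
\]
We need this to be at most $C a_{k+1}$. It suffices that
\[
C-(Cc^\mu-1)a_k^\mu\le C\bigl(1-\kappa a_k^\mu\bigr),
\]
i.e. $Cc^\mu-1\ge C\kappa$, i.e. $C(c^\mu-\kappa)\ge 1$. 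This requires $c^\mu>\kappa$, which conflicts with $c$ being small in Case 1.

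\textbf{Main obstacle and fix.} The difficulty is choosing the threshold consistently. The cleaner route is to use the case threshold $\phi_k\ge \theta a_k$ with $\theta$ large rather than small. Fix $\theta$ with $\theta^\mu\ge 2\kappa+2$ and set $C:=\max\{2\theta+2,\ \text{value needed at } k_0\}$.

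\emph{If $\phi_k\le\theta a_k$:} Case 1's argument with $c=\theta$ gives $\phi_{k+1}\le 2(\theta+a_k^\mu)a_{k+1}\le C a_{k+1}$.

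\emph{If $\theta a_k<\phi_k\le C a_k$:} the factor is at most $1-\theta^\mu a_k^\mu$ once $\theta^\mu a_k^\mu\le 1-\eta$, which holds for large $k$ because $a_k\to0$. Then
\[
\phi_{k+1}\le a_k\bigl(C-(C\theta^\mu-1)a_k^\mu\bigr).
\]
The requirement $C\theta^\mu-1\ge C\kappa$ holds since $\theta^\mu-\kappa\ge 1$ and $C\ge 1$. When $\eta$ dominates instead, we would use $\eta$-contraction together with $a_k/a_{k+1}\to1$ and $a_k^{\mu}\to0$; but the above choice of $k_0$ avoids this. We also use that $\phi_k\mapsto\phi_k\max\{\eta,1-\phi_k^\mu\}$ is handled by direct bounds rather than monotonicity.

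\textbf{Base case.} Choose $k_0$ so that all the "large $k$" requirements hold from $k_0$ on, and enlarge $C$ so that $\phi_{k_0}\le C a_{k_0}$. The induction then yields $\phi_k\le C\gamma_k^{1/(1+\mu)}$ for all $k\ge k_0$. Since there are finitely many earlier indices, this gives $\phi_k=\mathcal{O}(\gamma_k^{1/(1+\mu)})$.
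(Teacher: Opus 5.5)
Your final argument (the version with the large threshold $\theta$) is correct, and it takes a genuinely different route from the paper.

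The paper works in three stages. It first shows by contradiction that $\phi_k$ eventually falls below $(1-\eta)^{1/\mu}$. It then uses monotonicity of $x\mapsto x\max\{\eta,1-x^\mu\}$ to show $\phi_k$ stays there, so the recursion reduces to $\phi_{k+1}\le\phi_k-\phi_k^{1+\mu}+\gamma_k$. Finally it runs an induction with $C_1=(1-\eta)^{1/\mu}\gamma_{k_2}^{-1/(1+\mu)}$, using monotonicity of $x\mapsto x-x^{1+\mu}$ on $[0,0.5^{1/\mu}]$ to substitute the inductive bound. These two monotonicity facts are exactly where $\eta\ge 0.5$ and $\mu\le 1$ are needed.

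You instead run a single induction with the split $\phi_k\le\theta a_k$ versus $\phi_k>\theta a_k$. In the first case, the crude bound $\phi_{k+1}\le\phi_k+\gamma_k$ together with $a_k\le 2a_{k+1}$ suffices. In the second case, once $\theta^\mu a_k^\mu\le 1-\eta$, the factor $\max\{\eta,1-\phi_k^\mu\}$ is at most $1-\theta^\mu a_k^\mu$ whichever branch is active. That quantity is nonnegative and independent of $\phi_k$, so you can multiply it by the inductive bound $\phi_k\le Ca_k$ without any monotonicity. Because $C$ absorbs $\phi_{k_0}/a_{k_0}$, you also never need the paper's first step of driving $\phi_k$ into the small region.

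What your approach buys is a shorter proof that holds for any $\eta\in[0,1)$ and $\mu>0$. The paper's route, in exchange, exhibits the eventual reduction to the classical recursion $\phi_{k+1}\le\phi_k-\phi_k^{1+\mu}+\gamma_k$ familiar from the Frank--Wolfe literature.

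In the write-up, discard the abandoned small-$c$ attempt and the remark about $\eta$ dominating. Also state explicitly that $k_0$ is fixed first, via $\kappa a_k^\mu\le 1/2$, $a_k^\mu\le 1$ and $\theta^\mu a_k^\mu\le 1-\eta$, none of which involve $C$. Only then set $C:=\max\{2\theta+2,\phi_{k_0}/a_{k_0}\}$, so the choice is visibly not circular.
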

\begin{remark}\label{comment_condition_gen}
    We comment on the condition \eqref{condition_for_gamma}. Suppose that $\gamma_k = c (k+1)^{-s}$ for some $c>0$ and $s > 0$. Then clearly $\{\gamma_k\}$ is positive nonincreasing and $\gamma_k \to 0$. We claim that the second relation in \eqref{condition_for_gamma} holds if $\mu \in (0,1]$ satisfies $s \le 1 + 1/\mu$.

    To see this, notice that the function $t\mapsto t^{1/(1+\mu)}$ is concave for $t\ge 0$. Hence,
    \begin{align*}
        &0\le \gamma_k^{-1}(\gamma_k^{1/(1+\mu)}-\gamma_{k+1}^{1/(1+\mu)})\le (1+\mu)^{-1}\gamma_k^{-1}\left(\gamma_{k+1}^{-\mu/(1+\mu)}\left(\gamma_k-\gamma_{k+1}\right)\right)\\
        &= \mathcal{O}(k^{s} k^{s\mu/(1+\mu)}((k+1)^{-s} \!-\! (k+2)^{-s}))\!= \! \mathcal{O}(k^{s} k^{s\mu/(1+\mu)}(k+2)^{-s}((1+1/(k+1))^s\!-\!1))\\
        &=\mathcal{O}(k^{-1+s\mu/(1+\mu)})=\mathcal{O}(k^{-1+s/(1+1/\mu)}).
    \end{align*}
    Since $s \in (0, 1 + 1/\mu]$, the above display shows that the second relation in \eqref{condition_for_gamma} holds.
\end{remark}
\begin{proof}[Proof of Proposition~\ref{prop_phi2}]
   {\bf Step 1}: We first show that there exists $k_0 \ge 1$ with
   \begin{equation}\label{claim1}
      \phi_{k_0}\le (1-\eta)^{1/\mu}\ \ \ {\rm and}\ \ \ \gamma_{k_0}\le \eta (1-\eta)^{1+1/\mu}.
   \end{equation}
   To see that, notice first that in view of the limit in \eqref{condition_for_gamma}, we can choose $k_1 \ge 1$ such that $\gamma_{k_1} \le \eta (1-\eta)^{1+1/\mu}$. Since $\{\gamma_k\}$ is nonincreasing, we see that $\gamma_{k} \le \eta (1-\eta)^{1+1/\mu}$ for all $k\ge k_1$. Now, suppose to the contrary that $\phi_{k}> (1-\eta)^{1/\mu}$ for all $k\ge k_1$. Then we see from \eqref{phi_relation1} that for all $k\ge k_1$,
   \begin{align*}
       \phi_{k+1}&\le \phi_k \max\{\eta,1-\phi_k^\mu\} + \gamma_k 
       \overset{\rm (a)}\le \eta\phi_k +\eta(1-\eta)^{1+1/\mu}\\
       & = \eta \phi_k +  (1-\eta)^{1+1/\mu} - (1-\eta)^{2+1/\mu} \overset{\rm (b)}{<} \phi_k -  (1-\eta)^{2+1/\mu},
   \end{align*}
   where (a) holds because $\gamma_{k} \le \eta (1-\eta)^{1+1/\mu}$ for all $k\ge k_1$,
   and (b) holds because $\phi_{k}> (1-\eta)^{1/\mu}$ for all $k\ge k_1$. Since  $(1-\eta)^{2+1/\mu}$ is a positive constant,
    the above display contradicts the nonnegativity of $\{\phi_k\}$. Thus, there exists $k_0\ge k_1 \ge 1$ such that \eqref{claim1} holds.

   {\bf Step 2}: For the $k_0$ obtained above, we now show that $\phi_{k}\le (1-\eta)^{1/\mu}$ for all $k\ge k_0$. We proceed by induction. We have $\phi_{k_0}\le (1-\eta)^{1/\mu}$ from {\bf Step 1} (see \eqref{claim1}). Suppose that $\phi_{t}\le (1-\eta)^{1/\mu}$ for some $t\ge k_0$. Then we have
   \begin{align*}
       \phi_{t+1}&\le \phi_t \max\{\eta,1-\phi_t^\mu\} + \gamma_t 
       \overset{\rm (a)}\le  (1-\eta)^{1/\mu} \max\{\eta,1-((1-\eta)^{1/\mu})^\mu\} + \gamma_{k_0} \\
       & \overset{\rm (b)}\le  \eta (1-\eta)^{1/\mu} + \eta(1-\eta)(1-\eta)^{1/\mu}
       =(2\eta-\eta^2) (1-\eta)^{1/\mu} \overset{\rm (c)}{<} (1-\eta)^{1/\mu},
   \end{align*}
   where (a) holds because the function $x\mapsto x\max\{\eta,1-x^\mu\}$ is increasing on $\R_+$ for any $\eta\in[0.5,1)$ and $\{\gamma_k\}$ is nonincreasing, (b) holds because $\gamma_{k_0}\le \eta(1-\eta)^{1+1/\mu}$ (see \eqref{claim1}),
   and (c) follows from the fact that $\eta\in[0.5,1)$ and $2x-x^2 \leq 1$  (the equality holds only for $x=1$).
   This completes the induction argument.

   {\bf Step 3}: From the above two steps and \eqref{phi_relation1}, we see that
   \begin{equation}\label{phi_relation11}
\phi_{k+1}\le \phi_k - \phi_k^{1+\mu} + \gamma_k\ \ \ {\rm and}\ \ \ \phi_k \le (1-\eta)^{1/\mu}\ \ \ \ \forall k\ge k_0.
    \end{equation}
    Now, from \eqref{condition_for_gamma}, we have
    $
    \sup_{k}\gamma_{k}^{-1}(\gamma^{1/(1+\mu)}_{k} - \gamma^{1/(1+\mu)}_{k+1}) < \infty$ 
    and 
    $
    \lim_{k\to \infty}\gamma_k^{-1/(1+\mu)} = \infty
    $.
    Consequently, there exists $k_2 \ge k_0$ such that
    \begin{align}
        &C_1 := (1-\eta)^{1/\mu} \gamma_{k_2}^{-1/(1+\mu)} > 1,\label{Crel1}\\
        &\sup_{k}(\gamma^{1/(1+\mu)}_{k} - \gamma^{1/(1+\mu)}_{k+1})\gamma_k^{-1} \le C_1^\mu - 1.\label{Crel2}
    \end{align}

    We will now finish the proof by showing that $\phi_k \le C_1 \gamma_k^{1/(1+\mu)}$ for all $k\ge k_2$.     
    We proceed by induction. First, since $k_2 \ge k_0$, we see from {\bf Step 2} that
    \[
    \phi_{k_2} \le (1-\eta)^{1/\mu} = (1-\eta)^{1/\mu} \gamma_{k_2}^{-1/(1+\mu)}\gamma_{k_2}^{1/(1+\mu)}= C_1 \gamma_{k_2}^{1/(1+\mu)},
    \]
    where the equality follows from the definition of $C_1$ in \eqref{Crel1}. Now, suppose that $\phi_t \le C_1 \gamma_t^{1/(1+\mu)}$ for some $t\ge k_2$. Then we have $C_1\gamma_t^{1/(1+\mu)} \le C_1\gamma_{k_2}^{1/(1+\mu)} = (1-\eta)^{1/\mu}$. Using this, we can deduce from \eqref{phi_relation11} that
    \begin{align*}
        \phi_{t+1}&\le \phi_t - \phi_t^{1+\mu} + \gamma_t\le C_1\gamma_t^{1/(1+\mu)} - C_1^{1+\mu}\gamma_t + C_1\gamma_t\\
        & = C_1 \gamma_{t+1}^{1/(1+\mu)} + C_1\gamma_t\big(\gamma_t^{-1}(\gamma_t^{1/(1+\mu)} - \gamma_{t+1}^{1/(1+\mu)}) - C_1^{\mu} + 1\big)\le C_1 \gamma_{t+1}^{1/(1+\mu)},
    \end{align*}
    where in the second inequality we used the facts that $x\mapsto x-x^{1+\mu}$ is increasing on $[0,0.5^{1/\mu}]$ (and noting that $\phi_t \le C_1\gamma_t^{1/(1+\mu)} \leq (1-\eta)^{1/\mu}\leq 0.5^{1/\mu}$ since $\eta\in[0.5,1)$) and $C_1 > 1$ (see \eqref{Crel1}), and the last inequality follows from \eqref{Crel2}. This completes the induction argument and finishes the proof.
\end{proof}

\section{A CG-based single-loop AL method}\label{sec-alg}

Algorithm~\ref{alg} contains our new single-loop CG+AL method that also accounts for the inequality constraints in \eqref{problem} under Assumption~\ref{assu-function}.
The operations in \eqref{alg-1} and \eqref{alg-2} correspond to  {\em one-step} of the conditional gradient method to minimize the AL function ${\cal L}_{\lambda_k}({\bm x},{\bm z}^k)=F_\lambda({\bm x},{\bm z}^k)+h({\bm x})$, then \eqref{alg-3} updates the dual iterate ${\bm z}^k$ as in the classical AL method with a diminishing dual stepsize. Note that Algorithm~\ref{alg} allows $\alpha_k = 0$ to cover the use of a generalized short stepsize (see \eqref{short_stepsize_haha} below) while requiring $\{\lambda_k\}$ to be increasing so that ${\cal L}_{\lambda_k}$ is updated every iteration. 


\begin{algorithm}[h]
\caption{A CG-based single-loop AL method for \eqref{problem} under Assumption~\ref{assu-function}} \label{alg}
\begin{enumerate}[\textbf{Step} 1.]
  \item  Choose  $({\bm x}^0,{\bm z}^0)\in   {\rm dom}\, h \times \R^m_+$, $\alpha_0 \in [0,1]$,
  a positive increasing sequence $\{\lambda_k\}$ (i.e., $\lambda_{k+1} > \lambda_k > 0$ for all $k$) with $\lim_{k\to\infty}\lambda_k = \infty$, and a positive summable sequence $\{\sigma_k\}$ with $\sigma_k\leq \lambda_k$.
  Set $k=0$. 

\item Compute
\begin{align}
& {\bm v}^{k} \in\Argmin_{{\bm v}\in \R^n}  \langle  \nabla_{\bm x} F_{\lambda_k}({\bm x}^k,{\bm z}^k),{\bm v}\rangle + h({\bm v}),\label{alg-1}\\
& {\bm x}^{k+1} ={\bm x}^k+\alpha_k ({\bm v}^k-{\bm x}^k), \label{alg-2} \\
&{\bm z}^{k+1}={\bm z}^k + \sigma_k \max\left\{-\frac{{\bm z}^k}{\lambda_k}, \, {\bm g}({\bm x}^{k+1})\right\}. \label{alg-3}
\end{align}
\item
  Choose $\alpha_{k+1}\in [0,1]$ and update $k\leftarrow k+1$. Go to Step 2.
\end{enumerate}
\end{algorithm}

\begin{remark}[Boundedness for sequences generated by Algorithm \ref{alg}]\label{remark-bound}
	Suppose that Assumption~\ref{assu-function} holds and let $\{({\bm x}^k,{\bm z}^k)\}$ be generated by Algorithm \ref{alg}.
	Since ${\rm dom}\,h$ is bounded and $\{\alpha_k\}\subset[0,1]$, we see from \eqref{alg-1} and \eqref{alg-2} that $\{{\bm x}^k\}$ is  bounded.
	
	In addition, from Step 1 of Algorithm~\ref{alg}, we have $\sigma_k\leq \lambda_k$ and ${\bm z}^0\geq {\bm 0}$. Thus, it follows from \eqref{alg-3} that ${\bm z}^k \geq {\bm 0}$ for all $k$.
	
Next, we show the boundedness of $\{{\bm z}^k\}$.
For each $i\in[m]$,	if $g_i({\bm x}^{k+1})< -z_i^k/\lambda_k$, we see that
	$z_i^{k+1}=(1-\sigma_k/\lambda_k)z_i^k \leq z_i^k$ thanks to the facts that $\sigma_k\leq \lambda_k$ and ${\bm z}^k\geq {\bm 0}$; if $g_i({\bm x}^{k+1})\geq  -z_i^k/\lambda_k$, in view of \eqref{alg-3}, we see  that
	\begin{align}\label{remark-bound-zk-0}
	z_i^{k+1} \!=\! z_i^k\!+\! \sigma_k g_i({\bm x}^{k+1})\!\leq\! z_i^k \!+\!\sigma_k [g_i({\bm x}^{k+1})]_+   \!\leq\! z_i^k \!+\!\sigma_k \|[{\bm g}({\bm x}^{k+1})]_+\| \!\leq\! z_i^k\!+\! M_g\sigma_k,
	\end{align}
    where in the last inequality we set $M_g:=\sup\{\|[{\bm g}({\bm x})]_+\|: {\bm x}\in {\rm dom}\,h\}<\infty$ (this quantity is finite thanks to the continuity of ${\bm g}$ on $\R^n$ and the boundedness of ${\rm dom}\, h$ by Assumption~\ref{assu-function}).
	Then, from the above discussions and the summability of $\{\sigma_k\}$ (see Step 1 of Algorithm~\ref{alg}),  we obtain the boundedness of $\{{\bm z}^k\}$ as follows
	\begin{equation}\label{remark-bound-zk}
    \textstyle
		\|{\bm z}^{k}\| = \big(\sum_{i=1}^m |z^k_i|^2\big)^{1/2}\leq \sum_{i=1}^m |z_i^k|\le
        \|{\bm z}^0\|_1 + M_g\sum_{j=0}^\infty\sigma_j<\infty.
	\end{equation}
    Finally, we conclude this remark by noting the existence of $\varsigma>0$ such that
    \begin{align}\label{Lip-Lpsi}
    L_{\Psi}({\bm x}^k,{\bm z}^k)\leq \varsigma \lambda_k \ \ \ \forall k,
    \end{align}
    which is an immediate consequence of the boundedness of $\{{\bm z}^k\}$ and \eqref{lip-bound} established above.
\end{remark}


For the rest of this section, we will derive some useful properties of Algorithm~\ref{alg} for our subsequent analysis, and present a general convergence result under some technical assumptions: these technical assumptions will be shown to hold by suitably choosing the algorithm parameters (such as $\{\alpha_k\}$) in Section~\ref{sec4}.

We start by defining the following auxiliary gap function for \eqref{problem} under Assumption~\ref{assu-function}. Given $({\bm x},{\bm z})\in {\rm dom}\, h \times \R^m_+$ and $\lambda>0$, define 
\begin{align}\label{def-G}
	G_{{\bm z},\lambda}({\bm x}):=\langle \nabla_{\bm x} F_\lambda({\bm x},{\bm z}), {\bm x}-{\bm v}^+\rangle + h({\bm x}) - h({\bm v}^+),
\end{align}
where ${\bm v}^+\in \Argmin_{{\bm v}}   \langle \nabla_{\bm x} F_{\lambda}({\bm x}, {\bm z}),{\bm v}\rangle + h({\bm v})$, and one can observe that the above definition is {\em independent} of the choice of ${\bm v}^+$ in $\Argmin_{{\bm v}}   \langle \nabla_{\bm x} F_{\lambda}({\bm x}, {\bm z}),{\bm v}\rangle + h({\bm v})$ and that $G_{{\bm z}, \lambda}(\bm x)\ge 0$. When ${\bm g}\equiv {\bm 0}$, $G_{{\bm z},\lambda}$ reduces to the standard gap function considered in the Frank-Wolfe literature for minimizing $f + h$, where $G_{{\bm z},\lambda}({\bm x})=0$ means ${\bm x}$ is globally optimal; see, e.g., \cite{Jaggi-2013,Beck-Pauwels-Sabach-2015, Freu16, Pena-2023}. However, when ${\bm g}\not\equiv {\bm 0}$, $G_{{\bm z},\lambda}({\bm x})=0$ only guarantees that ${\bm x}$ is a minimizer of \eqref{lagrangian-F} for fixed values of ${\bm z}$ and $\lambda$; in particular, this does not necessarily imply that ${\bm x}$ is optimal for \eqref{problem} because ${\bm x}$ may not satisfy ${\bm g}({\bm x})\le {\bm 0}$.

The next lemma relates the gap function to the AL function.
\begin{lemma}[Lower bounds for $G_{{\bm z}, \lambda}({\bm x})$]\label{lemma-G-T}
Suppose that Assumption~\ref{assu-function} holds.
For any $({\bm x},{\bm z})\in {\rm dom}\, h \times \R^m_+$ and $\lambda>0$ it holds that 
\[
G_{{\bm z}, \lambda}({\bm x})\geq [\mathcal{L}_\lambda({\bm x}, {\bm z})-L^*]_+,
\]
where $G_{{\bm z}, \lambda}$ is given in \eqref{def-G}, $\cL_\lambda$ is defined in \eqref{lagrangian} and $L^*$ is the optimal value of \eqref{problem}.
\end{lemma}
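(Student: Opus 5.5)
Since $G_{{\bm z},\lambda}({\bm x})\ge 0$ has already been observed right after \eqref{def-G}, it suffices to show $G_{{\bm z},\lambda}({\bm x})\ge \mathcal{L}_\lambda({\bm x},{\bm z})-L^*$. The plan is the standard Frank--Wolfe gap argument applied to the convex function $\mathcal{L}_\lambda(\cdot,{\bm z})$, compared at an optimal solution of \eqref{problem}.

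First, by Lemma~\ref{lemma-pre}\eqref{lemma-pre-1}, $F_\lambda(\cdot,{\bm z})=f+\Psi_\lambda(\cdot,{\bm z})$ is convex and differentiable in ${\bm x}$. (It is the smooth part of the convex $\mathcal{L}_\lambda(\cdot,{\bm z})$; convexity also follows directly, since each $\psi_\lambda(g_i(\cdot),z_i)$ is a nondecreasing convex function of the convex $g_i$.) Hence for every ${\bm y}\in{\rm dom}\,h$,
\[
F_\lambda({\bm y},{\bm z})\ge F_\lambda({\bm x},{\bm z})+\langle\nabla_{\bm x}F_\lambda({\bm x},{\bm z}),{\bm y}-{\bm x}\rangle .
\]

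Second, by the minimality of ${\bm v}^+$ in \eqref{def-G},
\[
\langle\nabla_{\bm x}F_\lambda({\bm x},{\bm z}),{\bm v}^+\rangle+h({\bm v}^+)\le \langle\nabla_{\bm x}F_\lambda({\bm x},{\bm z}),{\bm y}\rangle+h({\bm y}) \quad \forall\,{\bm y}.
\]
Combining the two inequalities gives, for all ${\bm y}\in{\rm dom}\,h$,
\[
G_{{\bm z},\lambda}({\bm x})\ge \langle\nabla_{\bm x}F_\lambda({\bm x},{\bm z}),{\bm x}-{\bm y}\rangle+h({\bm x})-h({\bm y})\ge \mathcal{L}_\lambda({\bm x},{\bm z})-\mathcal{L}_\lambda({\bm y},{\bm z}).
\]

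Third, choose ${\bm y}={\bm x}^*$, an optimal solution of \eqref{problem}; the KKT point in Assumption~\ref{assu-function}(iv) supplies one, since convexity makes KKT points optimal. This ${\bm x}^*$ is feasible and lies in ${\rm dom}\,h$, and $f({\bm x}^*)+h({\bm x}^*)=L^*$. By Lemma~\ref{lemma-pre}\eqref{lemma-pre-2}, ${\bm g}({\bm x}^*)\le{\bm 0}$ implies $\Psi_\lambda({\bm x}^*,{\bm z})\le 0$ for the given ${\bm z}$. Therefore $\mathcal{L}_\lambda({\bm x}^*,{\bm z})\le L^*$, which yields $G_{{\bm z},\lambda}({\bm x})\ge \mathcal{L}_\lambda({\bm x},{\bm z})-L^*$. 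Taking the maximum with $0$ finishes the proof.

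There is no real obstacle. The only points to check are the convexity of $F_\lambda(\cdot,{\bm z})$ (rather than of $\mathcal{L}_\lambda$ as a whole) and the use of Lemma~\ref{lemma-pre}\eqref{lemma-pre-2} at a general ${\bm z}\ge{\bm 0}$ rather than at ${\bm z}^*$; both are routine.
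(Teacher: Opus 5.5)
Your proof is correct and follows the paper's argument: you combine the minimality of ${\bm v}^+$ with the convexity of $F_\lambda(\cdot,{\bm z})$, evaluated at the optimal ${\bm x}^*$ supplied by the KKT point. The only cosmetic difference is that the paper bounds $F_\lambda({\bm x}^*,{\bm z})\le F_\lambda({\bm x}^*,{\bm z}^*)$ using Lemma~\ref{lemma-pre}(ii) and (iii), whereas you use (ii) alone to get $\mathcal{L}_\lambda({\bm x}^*,{\bm z})\le L^*$ directly, and your composition argument for the convexity of $F_\lambda(\cdot,{\bm z})$ is the right justification, since Lemma~\ref{lemma-pre}(i) as stated concerns $\mathcal{L}_\lambda$ rather than $F_\lambda$.
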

\begin{proof}
Let $({\bm x}^*,{\bm z}^*)$ be the KKT point given in Assumption~\ref{assu-function}(iv). Then ${\bm x}^*$ is a solution of \eqref{problem}, and we have for any ${\bm v}^+\in \Argmin_{{\bm v}}   \langle \nabla_{\bm x} F_{\lambda}({\bm x}, {\bm z}),{\bm v}\rangle + h({\bm v})$ that
\begin{align}
	G_{{\bm z}, \lambda}({\bm x})& =\langle \nabla_{\bm x} F_{\lambda}({\bm x},{\bm z}), {\bm x}-{\bm v}^+\rangle+ h({\bm x})-h({\bm v}^+) \geq \langle \nabla_{\bm x} F_{\lambda}({\bm x},{\bm z}), {\bm x}-{\bm x}^*\rangle+ h({\bm x})-h({\bm x}^*) \notag \\
	&\overset{\rm (a)}\geq F_{\lambda}({\bm x},{\bm z})- F_{\lambda}({\bm x}^*,{\bm z})
    + h({\bm x})-h({\bm x}^*)
	\overset{\rm (b)}\geq F_{\lambda}({\bm x},{\bm z})-F_{\lambda}({\bm x}^*,{\bm z}^*)+ h({\bm x})-h({\bm x}^*)\notag\\
    &=\mathcal{L}_{\lambda}({\bm x},{\bm z})-L^*, \notag
\end{align}
where (a) uses convexity of $F_\lambda(\cdot,{\bm z})$, and (b) follows from Lemma \ref{lemma-pre},
and the last equality follows from \eqref{lagrangian-F}. 
Invoking the nonnegativity of $G_{{\bm z}, \lambda}(\bm x)$ completes the proof.
\end{proof}

When ${\bm g}\equiv {\bf 0}$, the gap function appears naturally in the short stepsize schedule. With the gap function \eqref{def-G} in mind, one can naturally define the analogous short stepsize for the AL approach to solving \eqref{problem} under Assumption~\ref{assu-function} as
\begin{equation}\label{short_stepsize_haha}
\alpha^{\rm short} := \begin{cases}
0 & {\rm if }\ {G_{{\bm z},\lambda}({\bm x})}=\|{\bm v}^+-{\bm x}\|=0,\\
\displaystyle \min\left\{1,\frac{G_{{\bm z},\lambda}({\bm x})}{(L_\Psi({\bm x},{\bm z})+L_f)\|{\bm v}^+-{\bm x}\|^2}\right\} & {\rm otherwise},
\end{cases}
\end{equation}
where $G_{{\bm z},\lambda}({\bm x})$ and ${\bm v}^+$ are given in \eqref{def-G}, $L_{\Psi}$ is given in \eqref{lip-psi}, and $L_f$ is given in Assumption~\ref{assu-function}(i). Note that unlike the classical short stepsize for the case ${\bm g}\equiv 0$, the above short stepsize can be zero when ${\bm x}$ is not yet optimal for \eqref{problem}, precisely because $G_{{\bm z},\lambda}({\bm x})=0$ is not sufficient for guaranteeing optimality, as explained above.

Next, we study the changes to $\Psi_\lambda$ in \eqref{def-psi} when $\lambda$ and the dual variable ${\bm z}$ are updated under two stepsize rules: general stepsizes in $[0,1]$ and the short stepsize \eqref{short_stepsize_haha}.

\begin{lemma}[Change in $\Psi$ w.r.t. ${\bm z}$ and $\lambda$]\label{lemma-psi}
Suppose that Assumption~\ref{assu-function} holds.
	Let $\{({\bm x}^k,{\bm z}^k,{\bm v}^k)\}$ be generated by Algorithm \ref{alg} and $\Psi_{\lambda}$ be given in \eqref{def-psi}.
	Then, there exists a positive constant $c$ such that for all $k\ge 0$, 
	\begin{align}
		&\Psi_{\lambda_{k+1}}({\bm x}^{k+1},{\bm z}^{k+1})-\Psi_{\lambda_k}({\bm x}^{k+1},{\bm z}^k) \notag\\
		\leq &  c \max\{1/\lambda_k^2, \alpha^2_k,  \|[{\bm g}({\bm x}^k)]_+\|^2\}(\sigma_k + \lambda_{k+1}-\lambda_k). \label{lemma-psi-1}
	\end{align}
    If $\alpha_k$ is the short stepsize in \eqref{short_stepsize_haha} with $({\bm x}^k,{\bm z}^k,{\bm v}^k,\lambda_k)$ in place of $({\bm x},{\bm z},{\bm v}^+,\lambda)$, then it holds
    	\begin{align}
		&\Psi_{\lambda_{k+1}}({\bm x}^{k+1},{\bm z}^{k+1})-\Psi_{\lambda_k}({\bm x}^{k+1},{\bm z}^k) \notag\\
		\leq &  c \max\{1/\lambda_k^2, \alpha_k G_{{\bm z}^k,\lambda_k}({\bm x}^k),  \|[{\bm g}({\bm x}^k)]_+\|^2\}(\sigma_k + \lambda_{k+1}-\lambda_k). \label{lemma-psi-2}
	\end{align}
\end{lemma}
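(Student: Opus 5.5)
The plan is to argue componentwise, treating $\psi_t(u,v)$ from \eqref{def-orig-psi} as a function of $(t,v)$ with $u=g_i({\bm x}^{k+1})$ fixed. For each $i\in[m]$ I would split
\[
\psi_{\lambda_{k+1}}(u,z_i^{k+1})-\psi_{\lambda_k}(u,z_i^k)
=\big[\psi_{\lambda_{k+1}}(u,z_i^{k+1})-\psi_{\lambda_k}(u,z_i^{k+1})\big]+\big[\psi_{\lambda_k}(u,z_i^{k+1})-\psi_{\lambda_k}(u,z_i^k)\big].
\]
A direct case check on the two branches of \eqref{def-orig-psi} gives $\partial_v\psi_t(u,v)=\max\{u,-v/t\}$ and $\partial_t\psi_t(u,v)=\tfrac12\max\{u,-v/t\}^2$.

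For the second bracket, I would use concavity of $\psi_{\lambda_k}(u,\cdot)$ (Lemma~\ref{lemma-pre}(i)). By \eqref{alg-3}, $z_i^{k+1}-z_i^k=\sigma_k\max\{u,-z_i^k/\lambda_k\}$, so the bracket is at most $\sigma_k\max\{u,-z_i^k/\lambda_k\}^2$. For the first bracket, I would integrate $\partial_t\psi$ over $t\in[\lambda_k,\lambda_{k+1}]$, which bounds it by $\tfrac12(\lambda_{k+1}-\lambda_k)\sup_{t\ge\lambda_k}\max\{u,-z_i^{k+1}/t\}^2$.

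To bound these squared maxima, take $w\ge0$ and $t\ge\lambda_k$. Either $u\ge -w/t$, in which case $|u|\le\max\{[u]_+,w/t\}$, or the max equals $-w/t$. In both cases
\[
\max\{u,-w/t\}^2\le [u]_+^2+(w/\lambda_k)^2 .
\]
By Remark~\ref{remark-bound}, both ${\bm z}^k$ and ${\bm z}^{k+1}$ are nonnegative and uniformly bounded. Hence $(w/\lambda_k)^2\le C/\lambda_k^2$. Summing over $i$, the whole difference is at most a constant times $\max\{1/\lambda_k^2,\|[{\bm g}({\bm x}^{k+1})]_+\|^2\}(\sigma_k+\lambda_{k+1}-\lambda_k)$.

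The remaining, and main, step is to transfer $[{\bm g}({\bm x}^{k+1})]_+$ back to ${\bm x}^k$. Since $[\cdot]_+$ is $1$-Lipschitz, Remark~\ref{rem-Bi} and \eqref{alg-2} give
\[
[g_i({\bm x}^{k+1})]_+\le[g_i({\bm x}^k)]_++B_i\alpha_k\|{\bm v}^k-{\bm x}^k\|.
\]
Squaring and using $(a+b)^2\le2a^2+2b^2$ together with $\|{\bm v}^k-{\bm x}^k\|\le D$ bounds $\|[{\bm g}({\bm x}^{k+1})]_+\|^2$ by a constant times $\max\{\|[{\bm g}({\bm x}^k)]_+\|^2,\alpha_k^2\}$. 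This yields \eqref{lemma-psi-1}.

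For the short stepsize, the crude bound $\|{\bm v}^k-{\bm x}^k\|\le D$ is replaced by the definition \eqref{short_stepsize_haha}. If $\alpha_k>0$, then $\alpha_k\le G_{{\bm z}^k,\lambda_k}({\bm x}^k)/((L_\Psi+L_f)\|{\bm v}^k-{\bm x}^k\|^2)$, so
\[
\alpha_k^2\|{\bm v}^k-{\bm x}^k\|^2\le \alpha_k G_{{\bm z}^k,\lambda_k}({\bm x}^k)/L_f .
\]
If $\alpha_k=0$, this term vanishes. Hence $\|[{\bm g}({\bm x}^{k+1})]_+\|^2\le c'\max\{\|[{\bm g}({\bm x}^k)]_+\|^2,\alpha_kG_{{\bm z}^k,\lambda_k}({\bm x}^k)\}$, and \eqref{lemma-psi-2} follows with a common constant $c$.
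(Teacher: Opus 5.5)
Your proof is correct, but it obtains the key per-coordinate estimate by a different route from the paper's.

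\textbf{The paper's route.} For fixed $i$, the paper runs a four-case analysis on the signs of $\lambda_{k+1}g_i({\bm x}^{k+1})+z_i^{k+1}$ and $\lambda_k g_i({\bm x}^{k+1})+z_i^k$. It shows case (II) is void, using ${\bm z}^k\ge{\bm 0}$, $\sigma_k\le\lambda_k$ and monotonicity of $\{\lambda_k\}$. It then computes the difference explicitly in cases (I) and (III), and handles the mixed case (IV) with a fairly delicate chain of inequalities. Each case ends with a bound of the form
\[
\sigma_k^{-1}|z_i^{k+1}-z_i^k|^2+(\sigma_k+\lambda_{k+1}-\lambda_k)\max\{(z_i^k/\lambda_k)^2,[g_i({\bm x}^{k+1})]_+^2\}.
\]

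\textbf{Your route.} You split the change into a pure-$\lambda$ part and a pure-${\bm z}$ part, and use that $\psi_t(u,v)$ is $C^1$ in $(t,v)$ and concave in $v$. The supergradient inequality at $z_i^k$ has slope $\max\{u,-z_i^k/\lambda_k\}$, which is exactly the dual direction in \eqref{alg-3}. It therefore yields directly
\[
\sigma_k\max\{u,-z_i^k/\lambda_k\}^2=\sigma_k^{-1}|z_i^{k+1}-z_i^k|^2.
\]
The fundamental theorem of calculus with $\partial_t\psi_t=\tfrac12\max\{u,-v/t\}^2$ then handles the penalty increase.

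\textbf{Comparison.} Your argument is shorter and case-free, and it makes clear why the dual step only costs $\sigma_k$ times the squared dual direction. Beyond nonnegativity and boundedness of the multipliers from Remark~\ref{remark-bound}, you need no information about which regime each coordinate is in. The paper's version is fully elementary and gives exact expressions in several regimes, but at the cost of length.

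In a write-up, state explicitly that the two branch formulas in \eqref{def-orig-psi} and their partial derivatives in $t$ and $v$ agree on $tu+v=0$; this is what justifies the integration in $t$ and the concavity argument. Your transfer from ${\bm x}^{k+1}$ to ${\bm x}^k$ coincides with the paper's \eqref{lemma-psi-3-00}--\eqref{lemma-psi-3-000}. This includes the short-stepsize refinement $\alpha_k^2\|{\bm v}^k-{\bm x}^k\|^2\le\alpha_kG_{{\bm z}^k,\lambda_k}({\bm x}^k)/L_f$.
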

\begin{proof}
    From  \eqref{alg-3} in Algorithm \ref{alg}, it holds that for each $i\in [m]$
		\begin{align}
	|z_i^{k+1}-z_i^k|^2=\sigma^2_k |\max\{-z_i^k/\lambda_k,  g_i({\bm x}^{k+1})\}|^2 \leq  \sigma^2_k \max\{(z_i^k/\lambda_k)^2,  [g_i({\bm x}^{k+1})]_+^2\}. \label{lemma-psi-3}
	\end{align}
    Also, notice that for each $i\in[m]$
    \begin{align}
     [g_i({\bm x}^{k+1})]_+&\leq  [g_i({\bm x}^{k+1})-g_i({\bm x}^k)]_+ +[g_i({\bm x}^k)]_+ \notag  \\
     &\leq  B_i \alpha_k\|{\bm v}^k-{\bm x}^k\|+[g_i({\bm x}^k)]_+ \label{lemma-psi-3-00} \\
   &\leq  B_i D \alpha_k +[g_i({\bm x}^k)]_+,\label{lemma-psi-3-0}
    \end{align}
    where the second inequality follows from Remark~\ref{rem-Bi} and \eqref{alg-2},
    and the last inequality follows from Assumption~\ref{assu-function}(ii).
    From \eqref{lemma-psi-3-00}, if $\alpha_k$ is the short stepsize we have 
    \begin{align}
      [g_i({\bm x}^{k+1})]_+^2\leq & 2B_i^2 \alpha_k^2 \|{\bm v}^k-{\bm x}^k\|^2 +2 [g_i({\bm x}^k)]_+^2 \notag\\
      =  &2B_i^2 \alpha_k^2 \|{\bm v}^k-{\bm x}^k\|^2 (L_{\Psi}({\bm x}^k,{\bm z}^k)+L_f) (L_{\Psi}({\bm x}^k,{\bm z}^k)+L_f)^{-1} +2 [g_i({\bm x}^k)]_+^2 \notag \\
      \leq & 2B_i^2 L_f^{-1} \alpha_k G_{{\bm z}^k,\lambda_k}({\bm x}^k)+2 [g_i({\bm x}^k)]_+^2,  \label{lemma-psi-3-000}
    \end{align}
    where the last inequality follows by a direct calculation using the definition of the short stepsize and the fact that $(L_{\Psi}({\bm x}^k,{\bm z}^k)+L_f)^{-1}\leq L_f^{-1}$.

	Next, in view of \eqref{lemma-psi-3-0} and \eqref{lemma-psi-3-000},  the fact that $\Psi_{\lambda}({\bm x},{\bm z})= \sum_{i=1}^m \psi_\lambda(g_i({\bm x}), z_i) $ (see \eqref{def-psi}) and the boundedness of $\{{\bm z}^k\}$ (see \eqref{remark-bound-zk}), to prove \eqref{lemma-psi-1} and \eqref{lemma-psi-2}, it suffices to show for each $i\in [m]$
    \begin{align}
&\psi_{\lambda_{k+1}}(g_i({\bm x}^{k+1}),z_i^{k+1})-\psi_{\lambda_k}(g_i({\bm x}^{k+1}),z_i^k) \notag\\
&=\mathcal{O}(\max\{(z_i^k/\lambda_k)^2,  [g_i({\bm x}^{k+1})]_+^2\}[\sigma_k+(\lambda_{k+1}-\lambda_k)]). \label{lemma-psi-3-1}
    \end{align} 
To this end, we consider different cases for each $i\in [m]$ as follows.
	\begin{enumerate}[\rm (I)]
		\item $\lambda_{k+1} g_i({\bm x}^{k+1}) + z_i^{k+1}\geq 0$ and   $\lambda_k g_i({\bm x}^{k+1})+ z_i^k\geq 0$.
		
		In this case, it holds that 
        \begin{equation}\label{z_update_haha}
        z_i^{k+1}=z_i^k+\sigma_k g_i({\bm x}^{k+1})
        \end{equation}
        thanks to \eqref{alg-3} and the fact that $g_i({\bm x}^{k+1})\geq -z_i^k/\lambda_k$.
		Then we see that
			\begin{align}
			&\psi_{\lambda_{k+1}}(g_i({\bm x}^{k+1}),z_i^{k+1})-\psi_{\lambda_k}(g_i({\bm x}^{k+1}),z_i^k)   \notag  \\
			=&g_i({\bm x}^{k+1})z_i^{k+1} +\frac{\lambda_{k+1}}{2} g_i({\bm x}^{k+1})^2-g_i({\bm x}^{k+1})z_i^k -\frac{\lambda_{k}}{2} g_i({\bm x}^{k+1})^2 \notag \\
			\leq & \sigma_k^{-1} |z_i^{k+1}-z_i^k|^2+\frac{\lambda_{k+1}-\lambda_k}{2} \max\{ (z^k_i/\lambda_k)^2, [g_i({\bm x}^{k+1})]_+^2\}, \notag
		\end{align}
        where the equality follows from \eqref{def-orig-psi}, and
        the inequality holds thanks to \eqref{z_update_haha}, the monotonicity of $\{\lambda_k\}$ and  the fact that $g_i({\bm x}^{k+1})\geq -z_i^k/\lambda_k$.
Using the above display together with \eqref{lemma-psi-3}, we see that \eqref{lemma-psi-3-1} holds.
			
		\item 		 $\lambda_{k+1} g_i({\bm x}^{k+1}) + z_i^{k+1}\geq 0$ and   $\lambda_k g_i({\bm x}^{k+1})+ z_i^k< 0$.

We note that this case is void.  To see this,
notice that $\lambda_k g_i({\bm x}^{k+1})+z_i^k<0$ means $g_i({\bm x}^{k+1})<-z_i^k/\lambda_k$, which together with \eqref{alg-3} implies that
		$z_i^{k+1}=z_i^k- (\sigma_k/\lambda_k) z_i^k$. 
		Since $z_i^k\geq 0$ (see Remark \ref{remark-bound}) and $0 < \sigma_k\le \lambda_k$, we see that $z_i^k\geq z_i^{k+1} \geq 0$.
		Also note that $g_i({\bm x}^{k+1})<0$, and that $\{\lambda_k\}$ is increasing. We must then have $\lambda_{k+1}g_i({\bm x}^{k+1})+z_i^{k+1} \le \lambda_k g_i({\bm x}^{k+1})+z_i^k<0$.
			
		\item $\lambda_{k+1} g_i({\bm x}^{k+1}) + z_i^{k+1}< 0$ and   $\lambda_k g_i({\bm x}^{k+1})+ z_i^k< 0$.
		
			In this case, we have $g_i({\bm x}^{k+1})<0$ since $z_i^k\geq 0$. In addition, since $g_i({\bm x}^{k+1})<-z_i^k/\lambda_k$, we have from \eqref{alg-3} that 
			\begin{align} \label{lemma-psi-4}
			z_i^{k+1}=z_i^k-\sigma_kz_i^k/\lambda_k.
			\end{align}
			We  deduce that
		\begin{align}
	&\psi_{\lambda_{k+1}}(g_i({\bm x}^{k+1}),z_i^{k+1})-\psi_{\lambda_k}(g_i({\bm x}^{k+1}),z_i^k)   
			= \frac{|z_i^k|^2}{2\lambda_k} - \frac{|z_i^{k+1}|^2}{2\lambda_{k+1}} \notag\\
	=&\frac{1}{2}\bigg(\frac{1}{\lambda_k}-\frac{1}{\lambda_{k+1}}(1-\sigma_k/\lambda_k)^2\bigg) |z_i^k|^2
	=\frac{\lambda_{k+1}-\lambda_k + 2\sigma_k  - \sigma_k^2/\lambda_k}{2\lambda_k^2 (\lambda_{k+1}/\lambda_k)} |z_i^k|^2,  \notag \\
    \leq  &\frac{\lambda_{k+1}-\lambda_k}{2}\frac{|z^k_i|^2}{\lambda_k^2}
    +\sigma_k \frac{|z^k_i|^2}{\lambda_k^2},
\end{align}
		where the first equality follows from \eqref{def-orig-psi}, 
        the second equality follows from \eqref{lemma-psi-4},
        and the last inequality follows from the monotonicity of $\{\lambda_k\}$.  
        Thus, we see that
		\eqref{lemma-psi-3-1} holds.
		
			\item $\lambda_{k+1} g_i({\bm x}^{k+1}) + z_i^{k+1}< 0$ and   $\lambda_k g_i({\bm x}^{k+1})+ z_i^k\geq 0$.
			
			In this case, we have that $g_i({\bm x}^{k+1})<0$. In addition, in view of \eqref{alg-3} and the fact that  $g_i({\bm x}^{k+1})\geq -z_i^k/\lambda_k$, it holds that
			\begin{align}\label{lemma-Psi-5}
				z_i^{k+1}=z_i^k+\sigma_k g_i({\bm x}^{k+1}).
				\end{align}
			  Hence, we have $0>\lambda_{k+1} g_i({\bm x}^{k+1}) + z_i^{k+1}\!=\!\lambda_{k+1} g_i({\bm x}^{k+1}) + z_i^k + \sigma_k g_i({\bm x}^{k+1})$, which further implies that
				\begin{align}\label{lemma-Psi-6}
			g_i({\bm x}^{k+1}) < -z_i^k/(\lambda_{k+1}+\sigma_k).
				\end{align}
			
			We then deduce from \eqref{def-orig-psi} that
			\begin{align}
				&\psi_{\lambda_{k+1}}(g_i({\bm x}^{k+1}),z_i^{k+1})-\psi_{\lambda_k}(g_i({\bm x}^{k+1}),z_i^k)  
				= - \frac{|z_i^{k+1}|^2}{2\lambda_{k+1}}-g_i({\bm x}^{k+1}) z_i^k-\frac{\lambda_k}{2} g_i({\bm x}^{k+1})^2	\notag	\\
				&= 	\sigma_k^{-1} |z_i^{k+1}-z_i^k|^2 - \frac{|z_i^{k+1}|^2}{2\lambda_{k+1}}	-	g_i({\bm x}^{k+1}) z_i^{k+1} - \frac{\lambda_k}{2} g_i({\bm x}^{k+1})^2 \notag \\
				&\leq  \sigma_k^{-1} |z_i^{k+1}-z_i^k|^2 - 
				\left( \frac{|z_i^{k+1}|^2}{2\lambda_{k+1}}	-	\frac{z_i^k z_i^{k+1}}{\lambda_k} + \frac{|z_i^k|^2}{2(\lambda_{k+1}+\sigma_k)^2/\lambda_k}\right) \notag \\
					&\leq  \sigma_k^{-1} |z_i^{k+1}-z_i^k|^2 - 
				\left( \frac{|z_i^{k+1}|^2}{2(\lambda_{k+1}+\sigma_k)^2/\lambda_k}	-	\frac{z_i^k z_i^{k+1}}{\lambda_k} + \frac{|z_i^k|^2}{2(\lambda_{k+1}+\sigma_k)^2/\lambda_k}\right) \notag \\
					&=  \left(\sigma_k^{-1}-\frac{\lambda_k}{2(\lambda_{k+1}+\sigma_k)^2}\right)|z_i^{k+1}-z_i^k|^2+\left(\frac{1}{\lambda_k}-\frac{\lambda_k}{(\lambda_{k+1}+\sigma_k)^2} \right) z_i^k z_i^{k+1} \notag \\
					&\leq    \sigma_k^{-1} |z_i^{k+1}-z_i^k|^2 + \frac{(\lambda_{k+1}+\lambda_k)(\lambda_{k+1}-\lambda_k)+2\lambda_{k+1}\sigma_k+\sigma_k^2}{\lambda_k(\lambda_{k+1}+\sigma_k)^2}  z_i^k z_i^{k+1} \notag \\
                    &\leq    \sigma_k^{-1} |z_i^{k+1}-z_i^k|^2 + 
                    \frac{2(\lambda_{k+1}-\lambda_k)}{\lambda_k^2}|z^k_i|^2 + \frac{2\sigma_k}{\lambda_k^2}|z^k_i|^2 + \frac{\sigma_k}{\lambda_k^2}|z^k_i|^2, \notag
			\end{align}
			where the second equality follows from \eqref{lemma-Psi-5},   the first inequality follows from \eqref{lemma-Psi-6} and the fact that $g_i({\bm x}^{k+1})\geq -z_i^k/\lambda_k$, 
            and the last inequality follows from the monotonicity of $\{\lambda_k\}$ and the facts that $z^{k+1}_i< z^k_i$ (thanks to \eqref{lemma-Psi-5} and $g_i({\bm x}^{k+1})<0$) and $\sigma_k \leq \lambda_k$ (see Step 1 of Algorithm~\ref{alg}). Then, from the above display and \eqref{lemma-psi-3}, we can verify \eqref{lemma-psi-3-1}.
	\end{enumerate}
	\end{proof}

The next lemma concerns the change in $\cL_\lambda$ after performing one iteration of Algorithm~\ref{alg}. We again consider both the general stepsize in $[0,1]$ and the short stepsize. This lemma will play an important role in the next section in our convergence analysis under the open-loop and the short stepsize schedules.

\begin{lemma}[One iteration progress]\label{lemma-od}
Suppose that Assumption~\ref{assu-function} holds.
	Let $\{({\bm x}^k,{\bm z}^k,{\bm v}^k)\}$ be generated by Algorithm \ref{alg}.
	 Let $\cL_\lambda$ be defined in \eqref{lagrangian}, $G_{{\bm z}, \lambda}({\bm x})$ be defined in \eqref{def-G}, and $L_{\Psi}$ be given in \eqref{lip-psi}. Then it holds that for all $k\ge 0$,
	\begin{align*}
		\cL_{\lambda_k}({\bm x}^{k+1},{\bm z}^k)\leq \cL_{\lambda_k}({\bm x}^k,{\bm z}^k) - \alpha_k G_{{\bm z}^k, \lambda_k}({\bm x}^k) + \frac{L_{\Psi}({\bm x}^k,{\bm z}^k)+L_f}{2}\alpha_k^2 \|{\bm v}^k-{\bm x}^k\|^2.
	\end{align*}
     Moreover, if $\alpha_k$ is the short stepsize in \eqref{short_stepsize_haha} with $({\bm x}^k,{\bm z}^k,{\bm v}^k,\lambda_k)$ in place of $({\bm x},{\bm z},{\bm v}^+,\lambda)$,
then
\begin{align*}
\mathcal{L}_{\lambda_k}({\bm x}^{k+1},{\bm z}^k)
\leq   \mathcal{L}_{\lambda_k}({\bm x}^k,{\bm z}^k) - 0.5 \alpha_k G_{{\bm z}^k, \lambda_k}({\bm x}^k).
	\end{align*}
\end{lemma}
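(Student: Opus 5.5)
The first inequality is a descent-lemma argument for the smooth part $F_{\lambda_k}(\cdot,{\bm z}^k)=f+\Psi_{\lambda_k}(\cdot,{\bm z}^k)$, combined with convexity of $h$ along the segment $[{\bm x}^k,{\bm v}^k]$.

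\textbf{Step 1 (descent lemma).} By Assumption~\ref{assu-function}(i) and \eqref{lip-psi} in Lemma~\ref{lemma-pre}(iv), $\nabla_{\bm x}F_{\lambda_k}(\cdot,{\bm z}^k)$ is Lipschitz with constant $L_f+L_\Psi({\bm x}^k,{\bm z}^k)$. The estimate \eqref{lip-psi} is stated with the constant evaluated at the base point ${\bm x}$, with ${\bm x}'$ arbitrary in ${\rm dom}\,h$. Both ${\bm x}^k$ and ${\bm x}^{k+1}$ lie in ${\rm dom}\,h$, since ${\rm dom}\,h$ is convex and ${\bm x}^{k+1}$ is a convex combination of ${\bm x}^k$ and ${\bm v}^k\in{\rm dom}\,h$. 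So along the segment we may use the integral form
\[
F({\bm x}^{k+1})-F({\bm x}^k)-\langle\nabla F({\bm x}^k),{\bm x}^{k+1}-{\bm x}^k\rangle=\int_0^1\langle\nabla F({\bm x}^k+s({\bm x}^{k+1}-{\bm x}^k))-\nabla F({\bm x}^k),{\bm x}^{k+1}-{\bm x}^k\rangle\,ds .
\]
Each point ${\bm x}^k+s({\bm x}^{k+1}-{\bm x}^k)$ lies in ${\rm dom}\,h$, so \eqref{lip-psi} applies with base point ${\bm x}^k$ and the constant $L_\Psi({\bm x}^k,{\bm z}^k)$. This gives the quadratic upper bound
\[
F_{\lambda_k}({\bm x}^{k+1},{\bm z}^k)\le F_{\lambda_k}({\bm x}^k,{\bm z}^k)+\alpha_k\langle\nabla_{\bm x}F_{\lambda_k}({\bm x}^k,{\bm z}^k),{\bm v}^k-{\bm x}^k\rangle+\tfrac{L_\Psi({\bm x}^k,{\bm z}^k)+L_f}{2}\alpha_k^2\|{\bm v}^k-{\bm x}^k\|^2 .
\]

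\textbf{Step 2 (convexity of $h$).} Since $\alpha_k\in[0,1]$, we have $h({\bm x}^{k+1})\le h({\bm x}^k)+\alpha_k(h({\bm v}^k)-h({\bm x}^k))$. Adding this to the bound in Step 1, the linear terms combine into $-\alpha_k G_{{\bm z}^k,\lambda_k}({\bm x}^k)$ by \eqref{def-G}, with ${\bm v}^+={\bm v}^k$ from \eqref{alg-1}. This proves the first inequality.

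\textbf{Step 3 (short stepsize).} Write $G:=G_{{\bm z}^k,\lambda_k}({\bm x}^k)$, $L:=L_\Psi({\bm x}^k,{\bm z}^k)+L_f$ and $d:=\|{\bm v}^k-{\bm x}^k\|$. We split into cases according to \eqref{short_stepsize_haha}.
\begin{enumerate}[\rm (a)]
\item If $G=d=0$, then $\alpha_k=0$ and ${\bm x}^{k+1}={\bm x}^k$, so the claim is trivial. The same holds whenever $\alpha_k=0$.
\item If $d=0$ but $G>0$, then $\alpha_k=1$ (interpreting the quotient as $+\infty$) and the quadratic term vanishes, so the first inequality gives $-\alpha_kG\le-0.5\alpha_kG$.
\item If $d>0$ and $\alpha_k=G/(Ld^2)$, then $\tfrac L2\alpha_k^2d^2=\tfrac12\alpha_kG$.
\item If $d>0$ and $\alpha_k=1$, then $1\le G/(Ld^2)$, i.e.\ $Ld^2\le G$, so $\tfrac L2d^2\le\tfrac12G=\tfrac12\alpha_kG$.
\end{enumerate}
In every case the first inequality yields the second.

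The only delicate point is Step 1. The Lipschitz bound \eqref{lip-psi} is not uniform: its constant depends on the base point ${\bm x}$. The descent lemma therefore has to be derived from the integral form anchored at ${\bm x}^k$, rather than by quoting a standard global $L$-smooth lemma.
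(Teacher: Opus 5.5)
Your proof is correct and follows the paper's argument: a descent inequality for $F_{\lambda_k}(\cdot,{\bm z}^k)$ with modulus $L_\Psi({\bm x}^k,{\bm z}^k)+L_f$, plus convexity of $h$, then substitution of the short stepsize; your integral-form derivation anchored at ${\bm x}^k$ simply makes explicit what the paper compresses into ``Lipschitz differentiable with modulus $L_\Psi({\bm x}^k,{\bm z}^k)+L_f$''. One small simplification: your case (b) cannot occur, since ${\bm v}^k={\bm x}^k$ forces $G_{{\bm z}^k,\lambda_k}({\bm x}^k)=0$ by \eqref{def-G}.
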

\begin{proof}
	We have for any $k\ge 0$ that
	\begin{align}
		\mathcal{L}_{\lambda_k}({\bm x}^{k+1},{\bm z}^k) \overset{\rm (a)}{=}&  F_{\lambda_k}({\bm x}^k+\alpha_k({\bm v}^k- {\bm x}^k),{\bm z}^k)+h({\bm x}^k+\alpha_k({\bm v}^{k}-{\bm x}^k))\notag \\
		\overset{\rm (b)}{\leq} & F_{\lambda_k}({\bm x}^k,{\bm z}^k)+\alpha_k \langle \nabla_{\bm x} F_{\lambda_k}({\bm x}^k,{\bm z}^k), {\bm v}^k-{\bm x}^k \rangle\notag\\
        &\ \ \ \ + \frac{L_{\Psi}({\bm x}^k,{\bm z}^k)+L_f}{2}\alpha_k^2\|{\bm v}^k- {\bm x}^k\|^2 +(1-\alpha_k) h({\bm x}^k)+\alpha_k h({\bm v}^k) \notag  \\
		\overset{\rm (c)}{=} &  \mathcal{L}_{\lambda_k}({\bm x}^k,{\bm z}^k) -\alpha_k G_{{\bm z}^k, \lambda_k}({\bm x})+\frac{L_{\Psi}({\bm x}^k,{\bm z}^k)+L_f}{2}\alpha_k^2\|{\bm v}^k- {\bm x}^k\|^2, \label{lemma-od-4}
	\end{align}
	where (a) follows from \eqref{lagrangian-F},
	(b) follows from the convexity of $h$ and the fact that $F_{\lambda_k}(\cdot,{\bm z}^k)=f(\cdot)+\Psi_{\lambda_k}(\cdot,{\bm z}^k)$ is Lipschitz differentiable with modulus $L_{\Psi}({\bm x}^k,{\bm z}^k)+L_f$ thanks to Lemma \ref{lemma-pre}(iv),
    and (c) follows from \eqref{lagrangian-F} and the definition of $G_{{\bm z}, \lambda}({\bm x})$ in \eqref{def-G}.

    Finally, suppose that $\alpha_k$ is the short stepsize. When ${G_{{\bm z}^k,\lambda_k}({\bm x}^k)}=0$, the claimed inequality clearly holds as $\alpha_k = 0$. When ${G_{{\bm z}^k,\lambda_k}({\bm x}^k)}>0$, we have
    \begin{align}
		\mathcal{L}_{\lambda_k}({\bm x}^{k+1},{\bm z}^k)
			&\leq \mathcal{L}_{\lambda_k}({\bm x}^k,{\bm z}^k)-
			\min\left\{1, \frac{G_{{\bm z}^k, \lambda_k}({\bm x}^k)}{(L_\Psi({\bm x}^k,{\bm z}^k)+L_f)\|{\bm v}^k-{\bm x}^k\|^2}\right\} G_{{\bm z}^k, \lambda_k}({\bm x}^k) \notag\\
			&\ \ \ \ + \frac{L_{\Psi}({\bm x}^k,{\bm z}^k)+L_f}{2}\min\left\{1, \frac{G_{{\bm z}^k, \lambda_k}({\bm x}^k)}{(L_\Psi({\bm x}^k,{\bm z}^k)+L_f)\|{\bm v}^k - {\bm x}^k\|^2}\right\}^2\|{\bm v}^k - {\bm x}^k\|^2 \notag \\
&\le \mathcal{L}_{\lambda_k}({\bm x}^k,{\bm z}^k) - \frac{G_{{\bm z}^k, \lambda_k}({\bm x}^k)}{2}\min\left\{1, \frac{G_{{\bm z}^k, \lambda_k}({\bm x}^k)}{(L_{\Psi}({\bm x}^k,{\bm z}^k)+L_f)\| {\bm x}^k - {\bm v}^k\|^2} \right\},  \notag
	\end{align}
	where the first inequality follows upon substituting the short stepsize in \eqref{lemma-od-4}.
\end{proof}

We next present our general convergence results for Algorithm~\ref{alg}, which are the basis for our convergence analysis under specific stepsize schedules in Section~\ref{sec4} and additional structural assumptions on $h$ in Section~\ref{sec-con-ss}. We first define another {\em nonnegative} auxiliary quantity that is related to optimality. Let
\begin{align}\label{def-T}
 T_{\lambda}({\bm x},{\bm z}):=[\mathcal{L}_{\lambda}({\bm x},{\bm z})-L^*]_+,
\end{align}
where $\cL_\lambda$ is defined in \eqref{lagrangian} and $L^*$ is the optimal value of \eqref{problem}.
For $\{({\bm x}^k,{\bm z}^k)\}$ and $\{\lambda_k\}$ from Algorithm \ref{alg}, we define
\begin{align}\label{def-Tk}
	T_{k} :=T_{\lambda_k}({\bm x}^k,{\bm z}^k)=[\mathcal{L}_{\lambda_k}({\bm x}^k,{\bm z}^k)-L^*]_+\ \ \ \ \ \forall k\ge 0.
\end{align}
In the next theorem, we bound the feasibility violation and the primal suboptimality in terms of $\{T_k\}$ and $\{\lambda_k\}$.

\begin{theorem}[Feasibility and primal suboptimality]\label{lemma-fea-fun-bound}
Suppose that Assumption~\ref{assu-function} holds.
	Let $\{({\bm x}^k,{\bm z}^k)\}$ be generated by Algorithm \ref{alg} and $T_k$ be given in \eqref{def-Tk}.
    Then it holds that
    \begin{align*}
      &\| [{\bm g}({\bm x}^k)]_+\| = \mathcal{O}(\max\{(T_k/\lambda_k)^\frac12,1/\lambda_k\})= \mathcal{O}(\max\{T_k,1/\lambda_k\}),\\
      &|f({\bm x}^k)+h({\bm x}^k)-L^*|= \mathcal{O}(\max\{T_k,1/\lambda_k\}),
    \end{align*}
    where $L^*$ is the optimal value of problem \eqref{problem}.
\end{theorem}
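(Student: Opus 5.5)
The plan is to compare $\cL_{\lambda_k}({\bm x}^k,{\bm z}^k)$ with the Lagrangian at the KKT pair $({\bm x}^*,{\bm z}^*)$ of Assumption~\ref{assu-function}(iv), using the explicit form of $\psi_t$ in \eqref{def-orig-psi}. Write $P({\bm x}):=f({\bm x})+h({\bm x})$. Since $\lambda_k\to\infty$, we have $1/\lambda_k\le 1$ eventually, and $T_k$ is bounded: $\cL_{\lambda_k}$ is bounded above on the bounded iterates because ${\bm z}^k$ is bounded (Remark~\ref{remark-bound}) and $\lambda_k\|[{\bm g}]_+\|^2$ grows at most linearly. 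Hence it suffices to prove the first (square-root) bound; the weaker form follows because $\sqrt{ab}\le \max\{a,b\}$ applied to $a=T_k$, $b=1/\lambda_k$.

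\textbf{Lower bound on $\Psi$.} For each $i$, $\psi_\lambda(u,v)\ge \frac{\lambda}{2}[u]_+^2+v u - \frac{v^2}{2\lambda}\cdot{\bf 1}$ in a suitable sense. Concretely, I will use $\psi_\lambda(u,v)=\frac{\lambda}{2}[u+v/\lambda]_+^2-\frac{v^2}{2\lambda}\ge \frac{\lambda}{2}[u]_+^2-\frac{v^2}{2\lambda}$ for $v\ge0$, which holds since $[u+v/\lambda]_+\ge[u]_+$. Because ${\bm z}^k\ge{\bm 0}$ is bounded, this gives
\[
\cL_{\lambda_k}({\bm x}^k,{\bm z}^k)\ge P({\bm x}^k)+\frac{\lambda_k}{2}\|[{\bm g}({\bm x}^k)]_+\|^2-\frac{C}{\lambda_k}.
\]
Next, the KKT conditions and convexity give the standard saddle inequality $P({\bm x})\ge L^*-\langle {\bm z}^*,{\bm g}({\bm x})\rangle\ge L^*-\|{\bm z}^*\|\,\|[{\bm g}({\bm x})]_+\|$ for every ${\bm x}\in{\rm dom}\,h$. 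This follows because ${\bm x}^*$ minimizes the convex function $f+h+\langle{\bm z}^*,{\bm g}\rangle$, whose value at ${\bm x}^*$ is $L^*$ by complementarity. Combining, with $r_k:=\|[{\bm g}({\bm x}^k)]_+\|$,
\[
T_k\ge \cL_{\lambda_k}({\bm x}^k,{\bm z}^k)-L^*\ge \frac{\lambda_k}{2}r_k^2-\|{\bm z}^*\|r_k-\frac{C}{\lambda_k}.
\]
Solving this quadratic inequality in $r_k$ gives $r_k\le \frac{2\|{\bm z}^*\|}{\lambda_k}+\sqrt{\frac{2T_k}{\lambda_k}+\frac{2C}{\lambda_k^2}}$, hence $r_k=\mathcal{O}(\max\{(T_k/\lambda_k)^{1/2},1/\lambda_k\})$.

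\textbf{Suboptimality.} For the lower bound, the saddle inequality gives $P({\bm x}^k)-L^*\ge -\|{\bm z}^*\|r_k=-\mathcal{O}(\max\{T_k,1/\lambda_k\})$. For the upper bound, the inequality above also yields $P({\bm x}^k)-L^*\le T_k+C/\lambda_k-\frac{\lambda_k}{2}r_k^2\le T_k+C/\lambda_k$.

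The main subtlety lies in the lower bound on $\psi$, which requires the sign ${\bm z}^k\ge{\bm 0}$ and the boundedness of ${\bm z}^k$, both supplied by Remark~\ref{remark-bound}. Note that only a bound $\mathcal{O}(1/\lambda_k)$ on the $-z^2/(2\lambda)$ term is needed. Everything else is elementary algebra.
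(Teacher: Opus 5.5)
Your proposal is correct and is essentially the paper's own proof. It uses the same lower bound $\Psi_{\lambda}({\bm x},{\bm z})\ge \frac{\lambda}{2}\|[{\bm g}({\bm x})]_+\|^2-\|{\bm z}\|^2/(2\lambda)$ for ${\bm z}\ge{\bm 0}$, the same saddle-point inequality $f({\bm x}^k)+h({\bm x}^k)\ge L^*-\|{\bm z}^*\|\,\|[{\bm g}({\bm x}^k)]_+\|$, the same quadratic inequality solved for $\|[{\bm g}({\bm x}^k)]_+\|$, and the same two-sided bound on $f({\bm x}^k)+h({\bm x}^k)-L^*$. You should drop the aside that $T_k$ is bounded: under Assumption~\ref{assu-function} alone it need not be (e.g., $\alpha_k\equiv 0$ with ${\bm x}^0$ violating ${\bm g}({\bm x})\le{\bm 0}$ gives $T_k\to\infty$, as your own remark about linear growth in $\lambda_k$ suggests), but nothing depends on it, since $\sqrt{T_k/\lambda_k}\le\max\{T_k,1/\lambda_k\}$ holds unconditionally.
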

\begin{proof}
Notice that for all $k\ge 0$,
    \begin{align}
		&f({\bm x}^k)+ h({\bm x}^k)-L^* + \frac{\lambda_k}{2}\|[{\bm g}({\bm x}^k)]_+\|^2 \notag \\
        \leq & f({\bm x}^k)+ h({\bm x}^k)-L^* + \frac{\lambda_k}{2}\sum_{i=1}^m\bigg(\bigg[g_i({\bm x}^k)+\frac{z^k_i}{\lambda_k}\bigg]_+^2 -\frac{(z^k_i)^2}{\lambda_k^2} + \frac{(z^k_i)^2}{\lambda_k^2}\bigg)\notag\\
        \leq & T_k  + \|{\bm z}^k\|^2/(2\lambda_k), \label{lemma-fea-fun-bound-1}
		\end{align}
        where the first inequality holds because ${\bm z}^k \ge {\bm 0}$ (see Remark~\ref{remark-bound}), and the second inequality follows from the definition of $T_k$ in \eqref{def-Tk}.
        
		Recall that $({\bm x}^*,{\bm z}^*)$ is a KKT point of problem \eqref{problem} from Assumption \ref{assu-function}(iv). 
		From \cite[Corollary 3.4]{Rockafellar-1973}, we see that $({\bm x}^*,{\bm z}^*)$ is also a saddle point of the function $({\bm x},{\bm z})\mapsto  f({\bm x})+h({\bm x}) + \langle {\bm z}, {\bm g}({\bm x})\rangle-{\rm Ind}_{\R_+^m}({\bm z})$, which implies that for all $k\ge 0$,
		\begin{align}
		L^*=&f({\bm x}^*)+h({\bm x}^*)
		 \leq f({\bm x}^k)+h({\bm x}^k) +\langle {\bm z}^*, g({\bm x}^k)\rangle  \notag\\
         \leq& f({\bm x}^k)+h({\bm x}^k) +\langle {\bm z}^*, [g({\bm x})^k]_+)\rangle 
		 \leq f({\bm x}^k)+h({\bm x}^k)+\|{\bm z}^*\| \| [{\bm g}({\bm x}^k)]_+\|, \label{lemma-fea-fun-bound-2}  
		\end{align}
		where the second inequality holds because ${\bm z}^*\geq {\bm 0}$,  and the last inequality follows from the Cauchy-Schwarz inequality.

    From the last two displays, we have for all $k\ge 0$,
    \begin{align*}
    &-\|{\bm z}^*\| \| [{\bm g}({\bm x}^k)]_+\|+ \frac{\lambda_k}{2}\|[{\bm g}({\bm x}^k)]_+\|^2  \\
    \leq & (f({\bm x}^k)+ h({\bm x}^k)-L^* +\|{\bm z}^*\| \| [{\bm g}({\bm x}^k)]_+\|) -\|{\bm z}^*\| \| [{\bm g}({\bm x}^k)]_+\|+ \frac{\lambda_k}{2}\|[{\bm g}({\bm x}^k)]_+\|^2 \\
    \leq & T_k + \|{\bm z}^k\|^2/(2\lambda_k).
    \end{align*}
    Then, from the first and third rows of the above display, we have that 
     \begin{align*}
     \| [{\bm g}({\bm x}^k)]_+\| \leq & \bigg(\|{\bm z}^*\|+ \sqrt{\|{\bm z}^*\|^2+2\lambda_k(T_k+\|{\bm z}^k\|^2/(2\lambda_k))}\bigg)/\lambda_k.  
     \end{align*}
     Moreover, it follows from \eqref{lemma-fea-fun-bound-1} and \eqref{lemma-fea-fun-bound-2} that 
     \[
     |f({\bm x}^k)+h({\bm x}^k)-L^*|\leq \max\bigg\{T_k+\frac{\|{\bm z}^k\|^2}{2\lambda_k}, \,\|{\bm z}^*\| \|[{\bm g}({\bm x}^k)]_+\|\bigg\}.
     \]
  The desired assertion now follows immediately from the last two displays, the boundedness of $\{{\bm z}^k\}$ (see \eqref{remark-bound-zk}), and the observation that $2\sqrt{T_k/\lambda_k}\le T_k + 1/\lambda_k$.
\end{proof}

In view of Theorem~\ref{lemma-fea-fun-bound}, we see that the key to establishing the convergence of Algorithm~\ref{alg} is to derive conditions to obtain {\em vanishing} bounds on the {\em nonnegative} auxiliary sequences $\{T_k\}$ (by, e.g., suitable assumptions on $\{\alpha_k\}$, etc).
Specifically, we have the following convergence result based on the additional assumptions \eqref{general_conditions} and \eqref{the-con-1-3}: we will show in Section~\ref{sec4} that \eqref{general_conditions} and \eqref{the-con-1-3} can be guaranteed by properly choosing $\{\alpha_k\}$, $\{\sigma_k\}$ and $\{\lambda_k\}$.

\begin{theorem}[Global convergence]\label{the-con-0}
Suppose that Assumption~\ref{assu-function} holds.
	Let $\{({\bm x}^k,{\bm z}^k)\}$ be generated by Algorithm \ref{alg} and $L^*$ be the optimal value of problem \eqref{problem}. Let $\cL_\lambda$ be defined in \eqref{lagrangian}, $G_{{\bm z}, \lambda}({\bm x})$ be defined in \eqref{def-G}, and $T_k$ be defined in \eqref{def-Tk}. Assume that
    \begin{equation}\label{general_conditions}
\lim_{k\to\infty}T_k = 0.
   \end{equation}
	Then it holds that
    \begin{equation}\label{the-con-1-2-0}
    \lim_{k\to \infty}\|[{\bm g}({\bm x}^k)]_+\| = \lim_{k\to\infty} |f({\bm x}^k)+h({\bm x}^k)-L^*| = \lim_{k\rightarrow \infty} |\cL_{\lambda_k}({\bm x}^k,{\bm z}^k)-L^*|=0.
\end{equation}
    Moreover, if we assume further that problem \eqref{problem} has a Slater point $\tilde{\bm x}$, i.e., ${\bm g}(\tilde{\bm x})< {\bm 0}$ with $\tilde{{\bm x}}\in {\rm dom}\, h$, and that there exist positive sequence $\{\xi_k\}$ and $\iota > 0$ such that
      \begin{equation}\label{the-con-1-3}\textstyle
   \lim_{k\rightarrow \infty} \sum_{i=0}^k \frac{\xi_i}{\sum_{j=0}^k \xi_j} (G_{{\bm z}^i,\lambda_i}({\bm x}^i))^\iota= 0,
    \end{equation}
    then there exists a subsequence of $\{({\bm x}^k,[\lambda_k {\bm g}({\bm x}^k)+ {\bm z}^k]_+)\}$ such that
    all of its cluster points are KKT points of \eqref{problem}.
\end{theorem}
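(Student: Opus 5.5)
The plan has two parts: the limits \eqref{the-con-1-2-0}, then the KKT statement.

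\textbf{First part.} Since $T_k\to 0$ by \eqref{general_conditions} and $\lambda_k\to\infty$, Theorem~\ref{lemma-fea-fun-bound} immediately gives $\|[{\bm g}({\bm x}^k)]_+\|\to 0$ and $|f({\bm x}^k)+h({\bm x}^k)-L^*|\to 0$. For the AL values, I would sandwich $\Psi_{\lambda_k}({\bm x}^k,{\bm z}^k)$ using \eqref{def-psi}. Dropping the nonnegative terms $[\cdot]_+^2$ gives $\Psi_{\lambda_k}({\bm x}^k,{\bm z}^k)\ge -\|{\bm z}^k\|^2/(2\lambda_k)$, which tends to $0$ because $\{{\bm z}^k\}$ is bounded by \eqref{remark-bound-zk}. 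Hence
\[
\cL_{\lambda_k}({\bm x}^k,{\bm z}^k)-L^*\ \ge\ f({\bm x}^k)+h({\bm x}^k)-L^*-\|{\bm z}^k\|^2/(2\lambda_k)\ \to\ 0 .
\]
On the other side, $\cL_{\lambda_k}({\bm x}^k,{\bm z}^k)-L^*\le T_k\to 0$. Together these give \eqref{the-con-1-2-0}.

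\textbf{Second part, step 1 (a subsequence with vanishing gap).} I will show that \eqref{the-con-1-3} forces $\liminf_k G_{{\bm z}^k,\lambda_k}({\bm x}^k)=0$.
\begin{itemize}
\item Suppose $\sum_j\xi_j<\infty$. Then the weighted average converges to $\sum_i\xi_i G_i^\iota/\sum_j\xi_j$, and this limit is $0$ only if every $G_i=0$.
\item Suppose $\sum_j\xi_j=\infty$ and $G_i\ge\epsilon>0$ for all $i\ge K$. Then the average is asymptotically at least $\epsilon^\iota$, a contradiction.
\end{itemize}
So there is an index set $\mathcal K$ with $G_k:=G_{{\bm z}^k,\lambda_k}({\bm x}^k)\to 0$ along $\mathcal K$. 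Write ${\bm y}^k:=[\lambda_k{\bm g}({\bm x}^k)+{\bm z}^k]_+$. Then $\nabla_{\bm x}F_{\lambda_k}({\bm x}^k,{\bm z}^k)=\nabla f({\bm x}^k)+\sum_i y_i^k\nabla g_i({\bm x}^k)$. By the definition \eqref{def-G} of $G$ as a maximum over ${\bm v}$, for every ${\bm v}$ we have
\begin{equation*}
h({\bm v})\ \ge\ h({\bm x}^k)+\Big\langle \nabla f({\bm x}^k)+\textstyle\sum_i y_i^k\nabla g_i({\bm x}^k),\,{\bm v}-{\bm x}^k\Big\rangle - G_k .
\end{equation*}

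\textbf{Step 2 (boundedness of $\{{\bm y}^k\}_{k\in\mathcal K}$ via Slater).} I expect this to be the main obstacle. I would plug in ${\bm v}=\tilde{\bm x}$.
\begin{itemize}
\item By convexity, $\langle\nabla g_i({\bm x}^k),\tilde{\bm x}-{\bm x}^k\rangle\le g_i(\tilde{\bm x})-g_i({\bm x}^k)$.
\item Whenever $y_i^k>0$ we have $-g_i({\bm x}^k)<z_i^k/\lambda_k$, and $z_i^k/\lambda_k\to0$ because ${\bm z}^k$ is bounded. With $\delta:=-\tfrac12\max_i g_i(\tilde{\bm x})>0$, this gives, for large $k$,
\[
\sum_i y_i^k\langle\nabla g_i({\bm x}^k),\tilde{\bm x}-{\bm x}^k\rangle\le-\delta\|{\bm y}^k\|_1 .
\]
\item The remaining terms are bounded: $\nabla f$ is bounded on the bounded set ${\rm dom}\,h$, $h(\tilde{\bm x})<\infty$, and $h({\bm x}^k)$ is bounded, since $f+h\to L^*$ along the iterates and $f$ is bounded there.
\end{itemize}
This yields $\delta\|{\bm y}^k\|_1\le C+G_k$, so $\{{\bm y}^k\}_{k\in\mathcal K}$ is bounded.

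\textbf{Step 3 (cluster points are KKT points).} Take any cluster point $(\bar{\bm x},\bar{\bm y})$ of $\{({\bm x}^k,{\bm y}^k)\}_{k\in\mathcal K}$.
\begin{itemize}
\item \emph{Feasibility and $\bar{\bm x}\in{\rm dom}\,h$.} Lower semicontinuity of $h$ and boundedness of $h({\bm x}^k)$ give $\bar{\bm x}\in{\rm dom}\,h$ with $h(\bar{\bm x})\le\liminf h({\bm x}^k)$. The first part gives ${\bm g}(\bar{\bm x})\le{\bm 0}$, and $\bar{\bm y}\ge{\bm 0}$ by construction.
\item \emph{Stationarity.} Take $\liminf$ in the displayed inequality of Step 1, using continuity of $\nabla f$, ${\bm g}$, $\nabla g_i$ and $G_k\to0$. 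This gives $h({\bm v})\ge h(\bar{\bm x})+\langle\nabla f(\bar{\bm x})+\sum_i\bar y_i\nabla g_i(\bar{\bm x}),{\bm v}-\bar{\bm x}\rangle$ for all ${\bm v}$, i.e.\ the first relation in \eqref{kkt}.
\item \emph{Complementarity.} If $g_i(\bar{\bm x})<0$, then $\lambda_kg_i({\bm x}^k)+z_i^k\to-\infty$ along the subsequence, because ${\bm z}^k$ is bounded and $\lambda_k\to\infty$. So $y_i^k=0$ eventually and $\bar y_i=0$, which gives $\langle\bar{\bm y},{\bm g}(\bar{\bm x})\rangle=0$.
\end{itemize}
Thus every cluster point of this subsequence is a KKT point of \eqref{problem}.
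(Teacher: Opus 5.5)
Your argument is correct once one sign is fixed. In the second part it takes a different route from the paper.

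\textbf{The sign.} By \eqref{def-G} and the minimality of ${\bm v}^k$, the inequality displayed in your Step 1 should read
\[
h({\bm v})\ \ge\ h({\bm x}^k)-\Big\langle \nabla f({\bm x}^k)+\textstyle\sum_i y_i^k\nabla g_i({\bm x}^k),\,{\bm v}-{\bm x}^k\Big\rangle-G_k,
\]
that is, $-\nabla_{\bm x}F_{\lambda_k}({\bm x}^k,{\bm z}^k)\in\partial_{G_k}h({\bm x}^k)$. With the sign as you wrote it, two things go wrong:
\begin{itemize}
\item Setting ${\bm v}=\tilde{\bm x}$ only gives an upper bound on the already nonpositive sum $\sum_i y_i^k\langle\nabla g_i({\bm x}^k),\tilde{\bm x}-{\bm x}^k\rangle$, so Step 2 yields nothing.
\item Step 3 would produce $\nabla f(\bar{\bm x})+\sum_i\bar y_i\nabla g_i(\bar{\bm x})\in\partial h(\bar{\bm x})$ rather than the KKT inclusion.
\end{itemize}
With the correct sign, ${\bm v}=\tilde{\bm x}$ gives exactly
\[
\delta\|{\bm y}^k\|_1\le h(\tilde{\bm x})-h({\bm x}^k)+\langle\nabla f({\bm x}^k),\tilde{\bm x}-{\bm x}^k\rangle+G_k,
\]
and the rest of your Steps 2--3 goes through as stated.

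\textbf{Comparison with the paper.} Your first part is the paper's argument. For the multipliers, the paper proceeds as follows:
\begin{itemize}
\item It first extracts from the Slater point the implication \eqref{the-con-9}.
\item It proves boundedness of $\{[\lambda_k{\bm g}({\bm x}^k)+{\bm z}^k]_+\}$ by contradiction: it normalizes, passes to the limit in the $\epsilon$-subdifferential inclusion to get a normal-cone relation at $\bar{\bm x}$ (the argument in its footnote), and contradicts \eqref{the-con-9}.
\item It obtains stationarity at the limit from a closedness theorem for $\epsilon$-subdifferentials in \cite{Zalinescu-2002}.
\end{itemize}
You instead substitute the Slater point directly into the $\epsilon$-subgradient inequality. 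This is shorter and avoids both the normalization and the cited closedness result, since you verify the limit by hand using lower semicontinuity of $h$. It is also quantitative: $\|{\bm y}^k\|_1\le (C+G_k)/\delta$ for all large $k$, not only along $\mathcal K$.

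The paper's route is more modular. Its boundedness step uses the Slater point only through \eqref{the-con-9} at the limit point, so it would survive under any hypothesis guaranteeing that implication. You also prove $\liminf_k G_{{\bm z}^k,\lambda_k}({\bm x}^k)=0$ from \eqref{the-con-1-3}, including the summable-weights case, which the paper only asserts.
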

\begin{proof}
We deduce $\lim_{k\to \infty}\|[{\bm g}({\bm x}^k)]_+\| = \lim_{k\to\infty} |f({\bm x}^k)+h({\bm x}^k)-L^*|=0$ from \eqref{general_conditions} and Theorem~\ref{lemma-fea-fun-bound}.
Next, notice from the definition of $\mathcal{L}_\lambda$ in \eqref{lagrangian} that for all $k\ge 0$,
\begin{align*}
|\cL_{\lambda_{k}}({\bm x}^{k},{\bm z}^{k})- L^*| &= [\cL_{\lambda_{k}}({\bm x}^{k},{\bm z}^{k})- L^*]_+ + [L^* - \cL_{\lambda_{k}}({\bm x}^{k},{\bm z}^{k})]_+\\
&\le T_k + [L^* - (f({\bm x}^k)+h({\bm x}^k))]_+ + [-\Psi_{\lambda_k}({\bm x}^k,{\bm z}^k)]_+.
\end{align*}
Since $\|{\bm z}^k\|^2/\lambda_k\rightarrow 0$ (thanks to \eqref{remark-bound-zk} and $\lambda_k\rightarrow \infty$),  we have upon recalling the definition of $\Psi_{\lambda}$ in \eqref{def-psi} that $\liminf_{k\rightarrow \infty}\Psi_{\lambda_k}({\bm x}^k,{\bm z}^k) \geq \lim_{k\rightarrow \infty} - \|{\bm z}^k\|^2/(2\lambda_k)=0$. Using this together with the above display and the facts that $T_k \to 0$ (see \eqref{general_conditions}) and $f({\bm x}^k)+h({\bm x}^k)-L^*\rightarrow 0$, we deduce that $|\cL_{\lambda_{k}}({\bm x}^{k},{\bm z}^{k})- L^*|\rightarrow 0$.

    Finally, we further assume the existence of the Slater point $\tilde{\bm x}$ and \eqref{the-con-1-3} to prove the last assertion. We first prove the following implication:
    \begin{align}\label{the-con-9}
    \textstyle
    {\bm 0}\in \sum_{i=1}^m z_i \nabla g_i({\bm x})+\mathcal{N}_{{\rm dom}\, h}({\bm x}), \, \, {\bm z}^T {\bm g}({\bm x})=0, \,\, {\bm z}\geq {\bm 0} \,\, \Longrightarrow\,\, {\bm z}={\bm 0}.
    \end{align}
    Indeed, for $({\bm x},{\bm z})$ satisfying the left-hand side of \eqref{the-con-9}, we have
    \[
    0\overset{\rm (a)}\leq \bigg\langle \sum_{i=1}^m z_i \nabla g_i({\bm x}), \tilde{\bm x} -{\bm x}\bigg\rangle\overset{\rm (b)}\leq \sum_{i=1}^m z_i(g_i(\tilde{\bm x})-g_i({\bm x}))= {\bm z}^T {\bm g}(\tilde{\bm x})\leq 0,
    \]
    where (a) follows from the first inclusion in \eqref{the-con-9} and the definition of the normal cone, (b) follows from the convexity of $g_i$, the  equality holds because ${\bm z}^T {\bm g}({\bm x})=0$,  and the last inequality holds because ${\bm z}\geq {\bm 0}$ and ${\bm g}(\tilde{\bm x})<{\bm 0}$.
    The above display gives ${\bm z}^T{\bm g}(\tilde{\bm x})=0$, which together with ${\bm z}\geq 0$ and ${\bm g}(\tilde{\bm x})<0$ implies ${\bm z}={\bm 0}$.

    From \eqref{the-con-1-3}, it is clear that $\liminf_{k\rightarrow \infty}G_{{\bm z}^k,\lambda_k}({\bm x}^k)=0$. 
    Let $\mathcal{\tilde{K}}$ be an index set such that $\{{\bm x}^k\}_{k\in \mathcal{\tilde{K}}}$ is a subsequence satisfying $\lim_{k\rightarrow \infty, k\in \mathcal{\tilde{K}}}G_{{\bm z}^k,\lambda_k}({\bm x}^k)=0$.
    Define 
    \[
    \hat\epsilon_k:= G_{{\bm z}^k,\lambda_k}({\bm x}^k)=\langle \nabla_{\bm x} F_{\lambda_k}({\bm x}^k,{\bm z}^k), {\bm x}^k - {\bm v}^k\rangle + h({\bm x}^k) - h({\bm v}^k)\ \ \ \forall k \ge 0.
    \]
    Then we have $\lim_{k\rightarrow \infty, k\in \mathcal{\tilde{K}}}\hat\epsilon_k=0$. From \eqref{alg-1} and the definition of $\hat{\epsilon}_k$, we have that
    \begin{align*}
     \langle  \nabla_{\bm x} F_{\lambda_k}({\bm x}^k,{\bm z}^k),{\bm x}^k\rangle + h({\bm x}^k) = & \langle  \nabla_{\bm x} F_{\lambda_k}({\bm x}^k,{\bm z}^k),{\bm v}^k\rangle + h({\bm v}^k)+\hat{\epsilon}_k \\
     \leq & \langle  \nabla_{\bm x} F_{\lambda_k}({\bm x}^k,{\bm z}^k),{\bm x}\rangle + h({\bm x})+\hat{\epsilon}_k \quad \forall \, {\bm x}\in {\rm dom}\,h,
    \end{align*}
    which further implies that $-\nabla_{\bm x} F_{\lambda_k}({\bm x}^k,{\bm z}^k)\in \partial_{\hat{\epsilon}_k}h({\bm x}^k)$. From this fact and the definition of $F_{\lambda_k}$ (see \eqref{lagrangian-F}), we have that
    \begin{align}\label{the-con-10}
    \textstyle
   {\bm 0}\in \nabla f({\bm x}^k)+\sum_{i=1}^m[\lambda_k g_i({\bm x}^k)+{\bm z}^k_i]_+\nabla g_i({\bm x}^k)+ \partial_{\hat{\epsilon}_k}h({\bm x}^k)\ \ \ \forall k\ge 0.
    \end{align}
    
    We claim that $\{[\lambda_k {\bm g}({\bm x}^k)+{\bm z}^k]_+\}_{k\in \tilde{\cal K}}$ is bounded. Suppose to the contrary that $\{[\lambda_k {\bm g}({\bm x}^k)+{\bm z}^k]_+\}_{k\in \tilde{\cal K}}$ is unbounded.
   By passing to a further subsequence (we denote its index set by $\hat {\cal K}$) if necessary, we assume that $\lim_{k\to\infty,k\in \hat{\cal K}}[\lambda_k {\bm g}({\bm x}^k)+{\bm z}^k]_+ = \infty$ and $\{({\bm x}^k,[\lambda_k {\bm g}({\bm x}^k) +{\bm z}^k]_+/\|[\lambda_k {\bm g}({\bm x}^k) +{\bm z}^k]_+\|)\}_{k\in \hat {\cal K}}$ converges to some $(\bar{\bm x},\bar{\bm z})$ satisfying ${\bm g}(\bar{\bm x})\leq {\bm 0}$ (see \eqref{the-con-1-2-0}) and $\bar{\bm z}\geq {\bm 0}$ with $\bar{\bm z}\neq {\bm 0}$.    
    Dividing both sides of \eqref{the-con-10} by $\|[\lambda_k {\bm g}({\bm x}^k) +{\bm z}^k]_+\|$ and passing to the limit along $k\in \hat {\cal K}$, we obtain
    \begin{align}\label{the-con-11}
    \textstyle
      {\bm 0}\in \sum_{i=1}^m\bar{z}_i\nabla g_i(\bar{\bm x})+\mathcal{N}_{{\rm dom}\, h}(\bar{\bm x}), 
    \end{align}
    where we used the boundedness of $\{\nabla f({\bm x}^k)\}$ and the definitions of $\epsilon$-subdifferential and normal cone.\footnote{Precisely, if $\{t_k\}_{k\in \hat{\cal K}}$ is a positive sequence converging to zero and $\{{\bm v}^k\}_{k\in {\hat {\cal K}}}$ converges to some $\bar {\bm v}$ and satisfies $t_k^{-1}{\bm v}^k\in \partial_{\hat\epsilon_k}h({\bm x}^k)$ for all $k\in {\hat{\cal K}}$, then $\langle {\bm v}^k, {\bm x} - {\bm x}^k\rangle\le t_k(h({\bm x}) - h({\bm x}^k) + \hat\epsilon_k)$ for all ${\bm x}\in {\rm dom}\, h$, which implies $\langle \bar {\bm v},{\bm x} - \bar {\bm x}\rangle \le 0$ for all ${\bm x}\in {\rm dom}\, h$. We obtain \eqref{the-con-11} upon applying the above observation to $t_k := \|[\lambda_k {\bm g}({\bm x}^k) +{\bm z}^k]_+\|^{-1}$ and ${\bm v}^k := -t_k(\nabla f({\bm x}^k)+\sum_{i=1}^m[\lambda_k g_i({\bm x}^k)+{\bm z}^k_i]_+\nabla g_i({\bm x}^k))$, and noting that $\bar {\bm x}\in {\rm dom}\,h$ since $f(x^k) + h(x^k) \to L^*$ (see \eqref{the-con-1-2-0}) and $h$ is closed.} Next, if $g_j(\bar{\bm x})<0$ for some $j\in [m]$, then, for all sufficiently large $k\in \hat{\cal K}$, we have 
    \begin{align*}
    \frac{[\lambda_k g_j({\bm x}^k)+z^k_j]_+}{\|[\lambda_k {\bm g}({\bm x}^k) +{\bm z}^k]_+\|}=\frac{\lambda_k [g_j({\bm x}^k)+z^k_j/\lambda_k]_+}{\|[\lambda_k {\bm g}({\bm x}^k) +{\bm z}^k]_+\|}= 0,
    \end{align*}
    where the last equality holds because $z^k_j/\lambda_k\rightarrow 0$ (thanks to \eqref{remark-bound-zk} and $\lambda_k\rightarrow \infty$) and $g_j({\bm x}^k)\rightarrow g_j(\bar{\bm x}) < 0$.
    Hence, we have $\bar{z}_j=0$ if $g_j(\bar{\bm x})<0$, which further implies that $\bar{\bm z}^T{\bm g}(\bar{\bm x})=0$.
     This fact together with \eqref{the-con-11} and the fact that $\bar {\bm z}\neq {\bm 0}$ contradicts the implication \eqref{the-con-9}. 
    Hence, we conclude that $\{[\lambda_k {\bm g}({\bm x}^k)+{\bm z}^k]_+\}_{k\in \tilde{\cal K}}$ is bounded.

    Since $\{({\bm x}^k,[\lambda_k {\bm g}({\bm x}^k)+{\bm z}^k]_+)\}_{k\in \tilde{\cal K}}$ is bounded, we can (arbitrarily) choose a further convergent subsequence with index set ${\cal\tilde{K}}_1\subseteq {\cal \tilde{K}}$ such that $\lim_{k\rightarrow \infty, k\in \mathcal{\tilde{K}}_1}({\bm x}^k,[\lambda_k {\bm g}({\bm x}^k)+{\bm z}^k]_+) = (\hat{\bm x},\hat{\bm z})$ for some $\hat{\bm x}$ and $\hat{\bm z}$. Then $g(\hat{{\bm x}})\leq {\bm 0}$ (see \eqref{the-con-1-2-0})
    and $\hat{\bm z}\geq 0$.
    Notice that if $\hat{z}_i> 0$, then $g_i(\hat{\bm x})=0$, for otherwise we have
    a contradiction as follows: 
    \[
    0<\hat{z}_i=\lim_{k\rightarrow \infty, k\in \tilde{\mathcal{K}}_1}[\lambda_k g_i({\bm x}^k)+z^k_i]_+= \lim_{k\rightarrow \infty, k\in \tilde{\mathcal{K}}_1}\lambda_k[g_i({\bm x}^k)+z^k_i/\lambda_k]_+=0,\]
   where the last equality holds because $z^k_i/\lambda_k\rightarrow 0$ (thanks to \eqref{remark-bound-zk} and $\lambda_k\rightarrow \infty$) and $g_i({\bm x}^k)\rightarrow g_i(\hat{\bm x})<0$.
   In addition, invoking \cite[Theorem 2.4.2(ix)]{Zalinescu-2002} and passing to the limit along $k\in \mathcal{\tilde{K}}_1$ in \eqref{the-con-10}, we have  
 \[\textstyle
   {\bm 0}\in \nabla f(\hat{\bm x})+\sum_{i=1}^m \hat{z}_i \nabla g_i(\hat{\bm x})+ \partial h(\hat{\bm x}).
  \]
  Therefore, we have verified that $(\hat{\bm x},\hat{\bm z})$ satisfies the KKT condition (see also \eqref{kkt}). The desired conclusion follows from this and the arbitrariness of $\mathcal{\tilde{K}}_1$.
\end{proof}

\begin{remark}[Constructability of subsequence index set $\tilde{\cal K}$]
The proof of Theorem \ref{the-con-0} reveals that any convergent subsequence of $\{({\bm x}^k,[\lambda_k {\bm g}({\bm x}^k)+{\bm z}^k]_+)\}_{\tilde{\cal K}}$ converges to a KKT point, where $\tilde{\cal K}$ is such that $\lim_{k\rightarrow \infty, k\in \mathcal{\tilde{K}}}G_{{\bm z}^k,\lambda_k}({\bm x}^k)=0$. Here, we discuss how such an index set can be identified based on \eqref{the-con-1-3}.

Let $\mathcal{G}_k :=\sum_{i=0}^k \frac{\xi_i}{\Gamma_k} (G_{{\bm z}^i,\lambda_i}({\bm x}^i))^\iota$, where $\Gamma_k:=\sum_{i=0}^k \xi_i$.
Then $\mathcal{G}_k\ge 0$ for all $k$ and $\mathcal{G}_k \to 0$. Hence, there exists a subsequence satisfying
$\mathcal{G}_{i_k} \leq \mathcal{G}_{i_{k}-1}$ for all $k$. Moreover, for any subsequence $\{\mathcal{G}_{i_k}\}$ satisfying $\mathcal{G}_{i_k} \leq \mathcal{G}_{i_{k}-1}$ for all $k$, we have
\[
(G_{{\bm z}^{i_k},\lambda_{i_k}}({\bm x}^{i_k}))^\iota=(\Gamma_{i_k} \mathcal{G}_{i_k}-\Gamma_{i_k-1} \mathcal{G}_{i_k-1})/\xi_{i_k}
=(\Gamma_{i_k-1} (\mathcal{G}_{i_k}-\mathcal{G}_{i_k-1}))/\xi_{i_k}+ \mathcal{G}_{i_k} \leq \mathcal{G}_{i_k}.
\]
Therefore, if we define 
\[\textstyle
\tilde{\mathcal{K}}:=\big\{ k\in \mathbb{N}_+  : \, \sum_{i=0}^k \frac{\xi_i}{\Gamma_k}(G_{{\bm z}^i, \lambda_i}({\bm x}^i))^\iota\leq \sum_{i=0}^{k-1} \frac{\xi_i}{\Gamma_{k-1}} (G_{{\bm z}^i, \lambda_i}({\bm x}^i))^\iota\big\},
\]
then $\lim_{k\rightarrow \infty, k\in \tilde{\mathcal{K}}} G_{{\bm z}^k, \lambda_k}({\bm x}^k)=0$ and each accumulation point of $\{({\bm x}^k,[\lambda_k {\bm g}({\bm x}^k)+{\bm z}^k]_+)\}_{\tilde{\mathcal{K}}}$ is a KKT point.
\end{remark}

\section{Convergence analysis of Algorithm~\ref{alg}}\label{sec4}

In view of Theorem~\ref{the-con-0}, the key to establishing convergence of Algorithm~\ref{alg} is to guarantee \eqref{general_conditions} and \eqref{the-con-1-3}. We show that these conditions can be achieved if we suitably choose the parameters $\{\alpha_k\}$, $\{\lambda_k\}$ and $\{\sigma_k\}$ in Algorithm~\ref{alg}.

\subsection{Open-loop stepsize}

In this section, we consider the following assumption for $\{\alpha_k\}$, $\{\lambda_k\}$ and $\{\sigma_k\}$ in Algorithm \ref{alg}.

\begin{assumption}[Assumptions for  parameters in Algorithm \ref{alg}]\label{assu-parameter}
	\begin{enumerate}[{\rm (i)}]		
    \item The positive penalty parameter $\{\lambda_k\}$ is increasing, $\lambda_k=\Theta(k^{\tau})$ and $\lambda_{k+1} - \lambda_k = \mathcal{O}(k^{-(1-\tau)})$ for some $\tau \in (0,1)$.
    \item The stepsize $\{\alpha_k\}\subset(0,1]$ is nonincreasing and  $\alpha_k=\Theta(k^{-p})$, where $p\in (\tau,1)$.	
     \item The positive sequence  $\{\sigma_k\}$ satisfies  $\sigma_k \leq \lambda_k$ and $\sigma_k=\Theta(k^{-(1+\gamma)})$ for some $\gamma>0$.
	\end{enumerate}
\end{assumption}

In order to invoke Theorems~\ref{lemma-fea-fun-bound} and~\ref{the-con-0} to deduce global convergence and complexity, it suffices to study the convergence behaviors of $\{T_k\}$ and $\{G_{{\bm z}^k,\lambda_k}({\bm x}^k)\}$. In the next proposition, we first show that $\{T_k\}$ satisfies a certain difference inequality and then establish the convergence rate of $\{T_k\}$ and a weighted average of $\{G_{{\bm z}^k,\lambda_k}({\bm x}^k)\}$.  

\begin{proposition}\label{prop-rate-T}
Suppose that Assumptions~\ref{assu-function} and \ref{assu-parameter} hold.
	Let $\{({\bm x}^k,{\bm z}^k,{\bm v}^k)\}$ be generated by Algorithm \ref{alg} and $L^*$ be the optimal value of problem \eqref{problem}. Let $\cL_\lambda$ be defined in \eqref{lagrangian}, $G_{{\bm z}, \lambda}({\bm x})$ be defined in \eqref{def-G}, $L_{\Psi}$ be given in \eqref{lip-psi}, and $T_k$ be given in \eqref{def-Tk}. 
     Then, there exist some positive constants $c_1$, $c_2$ and $c_3$ such that for all $k\ge 0$,
		\begin{align}
       & \cL_{\lambda_{k+1}}({\bm x}^{k+1},{\bm z}^{k+1})- L^* - c_1 \max\bigg\{\frac{1}{\lambda_k}, \alpha^2_k\lambda_k,T_k\bigg\} \frac{\sigma_k + \lambda_{k+1}-\lambda_k}{\lambda_k} \notag\\
  & \quad \quad \, \leq  \cL_{\lambda_k}({\bm x}^k,{\bm z}^k) -L^*  - \alpha_k G_{{\bm z}^k,\lambda_k}({\bm x^k}) + \frac{L_{\Psi}({\bm x}^k,{\bm z}^k)+L_f}{2}\alpha_k^2 \|{\bm v}^k-{\bm x}^k\|^2, \label{prop-rate-T-1} \\
	&	T_{k+1}
		\leq  (1-\alpha_k) T_k
             + \frac{\lambda_k + c_2}{2}c_3\alpha_k^2 +c_1 \max\bigg\{\frac{1}{\lambda_k}, \alpha^2_k\lambda_k,T_k\bigg\} \frac{\sigma_k + \lambda_{k+1}-\lambda_k}{\lambda_k}.  \label{prop-rate-T-1-1}
		\end{align}
    Moreover, it holds that 
	\begin{align}
    T_k &= \textstyle \mathcal{O}\big(\max\big\{\frac{1}{k^{p-\tau}},\frac{1}{k^{1-p+\tau}}\big\}\big), \label{prop-rate-T-2}\\
   \textstyle\sum_{i=0}^k  \frac{\alpha_i}{\Gamma_k} G_{{\bm z}^i, \lambda_i}({\bm x}^{i}) &= \begin{cases}
   \mathcal{O}(1/k^{1-p}) & {\rm if}\ 2p-\tau > 1,\\
   \mathcal{O}(\log(k)/k^{1-p}) & {\rm if}\ 2p-\tau = 1,\\
   \mathcal{O}(1/k^{p-\tau}) & {\rm if}\ 2p-\tau < 1,
   \end{cases}\label{prop-rate-TG-2}
	\end{align}
	where $\Gamma_k := \sum_{i=0}^k \alpha_i$,
     and $(p,\tau,\gamma)$ are specified in Assumption~\ref{assu-parameter}.
\end{proposition}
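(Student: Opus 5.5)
The plan has four steps: a one-step inequality, a recursion for $T_k$, Proposition~\ref{prop_phi1} for the rate of $T_k$, and a telescoping sum for the weighted gap. To get \eqref{prop-rate-T-1}, I will split
\[
\cL_{\lambda_{k+1}}({\bm x}^{k+1},{\bm z}^{k+1})-\cL_{\lambda_k}({\bm x}^{k},{\bm z}^{k})
\]
into the primal change $\cL_{\lambda_k}({\bm x}^{k+1},{\bm z}^k)-\cL_{\lambda_k}({\bm x}^k,{\bm z}^k)$ and the dual/penalty change $\Psi_{\lambda_{k+1}}({\bm x}^{k+1},{\bm z}^{k+1})-\Psi_{\lambda_k}({\bm x}^{k+1},{\bm z}^k)$. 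The first term is bounded by Lemma~\ref{lemma-od}. The second is bounded by \eqref{lemma-psi-1} of Lemma~\ref{lemma-psi}, which gives $c\max\{1/\lambda_k^2,\alpha_k^2,\|[{\bm g}({\bm x}^k)]_+\|^2\}(\sigma_k+\lambda_{k+1}-\lambda_k)$. Next I control $\|[{\bm g}({\bm x}^k)]_+\|^2$ with Theorem~\ref{lemma-fea-fun-bound}, whose proof gives $\|[{\bm g}({\bm x}^k)]_+\|^2=\mathcal{O}(\max\{T_k/\lambda_k,1/\lambda_k^2\})$. Factoring out $1/\lambda_k$, the maximum becomes $\max\{1/\lambda_k,\alpha_k^2\lambda_k,T_k\}/\lambda_k$, which yields \eqref{prop-rate-T-1} for a suitable $c_1$. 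To get \eqref{prop-rate-T-1-1}, I subtract $L^*$ and apply Lemma~\ref{lemma-G-T}, namely $G_{{\bm z}^k,\lambda_k}({\bm x}^k)\ge T_k$, so that $\mathcal{L}_{\lambda_k}-L^*-\alpha_kG\le (1-\alpha_k)T_k$; this uses $\cL-L^*\le T_k$ and $\alpha_k\le 1$. I then bound $\|{\bm v}^k-{\bm x}^k\|^2\le D^2$ and use \eqref{Lip-Lpsi}, $L_\Psi\le\varsigma\lambda_k$, to get the term $\frac{\lambda_k+c_2}{2}c_3\alpha_k^2$. Finally I take positive parts on the left-hand side, using that the right-hand side is nonnegative.

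For the rate \eqref{prop-rate-T-2}, I use $T_k\to$ bounded to absorb the $T_k$ inside the maximum. The term $c_1T_k(\sigma_k+\lambda_{k+1}-\lambda_k)/\lambda_k$ is $\mathcal{O}(k^{-1})\,T_k$, which is $o(\alpha_k)$ because $p<1$. For large $k$ this makes the coefficient of $T_k$ at most $(1-\alpha_k/2)$. The remaining additive terms are $\lambda_k\alpha_k^2=\Theta(k^{\tau-2p})$ and $\frac{1}{\lambda_k}\cdot\frac{\sigma_k+\lambda_{k+1}-\lambda_k}{\lambda_k}=\mathcal{O}(k^{-2\tau}k^{-(1-\tau)})=\mathcal{O}(k^{-1-\tau})$. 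The term $\alpha_k^2\lambda_k\cdot\mathcal{O}(k^{-1})$ is smaller still. So $\beta_k=\Theta(k^{-\min\{2p-\tau,1+\tau\}})$, and Proposition~\ref{prop_phi1} with $t_1=p$ and $t_2=\min\{2p-\tau,1+\tau\}$ applies, since $t_2>p$ follows from $p>\tau$ and $p<1$. It gives $T_k=\mathcal{O}(k^{-\min\{p-\tau,1-p+\tau\}})$, which is \eqref{prop-rate-T-2}. The small shift needed to restate the inequality from some $k_0$ onward is harmless.

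For \eqref{prop-rate-TG-2}, I telescope \eqref{prop-rate-T-1} from $0$ to $k$ to get
\[
\sum\alpha_iG_i\le \cL_{\lambda_0}({\bm x}^0,{\bm z}^0)-L^*-(\cL_{\lambda_{k+1}}({\bm x}^{k+1},{\bm z}^{k+1})-L^*)+\sum\big(\mathcal{O}(\lambda_i\alpha_i^2)+\text{error}_i\big).
\]
The term $-(\cL_{\lambda_{k+1}}-L^*)$ is bounded above uniformly: it is at most $[L^*-f-h]_++\|{\bm z}^{k+1}\|^2/(2\lambda_{k+1})$, which is bounded by \eqref{lemma-fea-fun-bound-2} and the boundedness of ${\bm x}^k$ and ${\bm z}^k$. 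The error terms are summable, being $\mathcal{O}(k^{-1-\tau})$ plus smaller terms and $T_i\,\mathcal{O}(i^{-1})$ with $T_i\to0$ polynomially. The sum $\sum_{i\le k}i^{\tau-2p}$ is $\mathcal{O}(1)$, $\mathcal{O}(\log k)$, or $\mathcal{O}(k^{1+\tau-2p})$ in the three cases. Dividing by $\Gamma_k=\Theta(k^{1-p})$ gives the three rates; in the third case $k^{1+\tau-2p}/k^{1-p}=k^{-(p-\tau)}$.

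The main obstacle is the first step: converting $\|[{\bm g}({\bm x}^k)]_+\|^2$ into $\max\{T_k,1/\lambda_k\}/\lambda_k$. This needs the sharper square-root form from Theorem~\ref{lemma-fea-fun-bound}, not its final simplified statement. It also requires care to keep constants uniform in $k$ through the boundedness of ${\bm z}^k$.
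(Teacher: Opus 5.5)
Your proposal is correct and follows the paper's proof essentially step for step. The steps are: Lemma~\ref{lemma-od} together with \eqref{lemma-psi-1} and the square-root feasibility bound (which already appears in the statement of Theorem~\ref{lemma-fea-fun-bound}, not only in its proof); then Lemma~\ref{lemma-G-T}; then absorbing the $\mathcal{O}(1/k)\,T_k$ term into $(1-\alpha_k/2)T_k$ and applying Proposition~\ref{prop_phi1} with $t_1=p$, $t_2=\min\{2p-\tau,1+\tau\}$; and finally telescoping and dividing by $\Gamma_k=\Theta(k^{1-p})$. The only deviation is that you bound $-(\cL_{\lambda_{k+1}}({\bm x}^{k+1},{\bm z}^{k+1})-L^*)$ directly via $\Psi_\lambda\ge -\|{\bm z}\|^2/(2\lambda)$ and \eqref{lemma-fea-fun-bound-2}, whereas the paper first invokes Theorem~\ref{the-con-0} (using $T_k\to 0$) to get $|\cL_{\lambda_k}({\bm x}^k,{\bm z}^k)-L^*|\le 1$ eventually; your shortcut is valid and slightly more self-contained.
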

\begin{proof}
 Applying Lemma \ref{lemma-od} with $({\bm x}^{k+1},{\bm x}^k,{\bm z}^k,{\bm v}^k,\alpha_k,\lambda_{k})$ in place of the sextuple $({\bm x}^+,{\bm x},{\bm z},{\bm v}^+,\alpha,\lambda)$, we have for all $k\ge 0$ that
            	\begin{align*}
		\cL_{\lambda_k}({\bm x}^{k+1},{\bm z}^k)\leq \cL_{\lambda_k}({\bm x}^k,{\bm z}^k) - \alpha_k G_{{\bm z}^k,\lambda_k}({\bm x^k}) + \frac{L_{\Psi}({\bm x}^k,{\bm z}^k)+L_f}{2}\alpha_k^2 \|{\bm v}^k-{\bm x}^k\|^2.
	\end{align*}
    From the last display, we see that 
    \begin{align}
   &\cL_{\lambda_{k+1}}({\bm x}^{k+1},{\bm z}^{k+1})- L^*-(\cL_{\lambda_{k+1}}({\bm x}^{k+1},{\bm z}^{k+1})-\cL_{\lambda_k}({\bm x}^{k+1},{\bm z}^k)) \notag\\
   \leq & \cL_{\lambda_k}({\bm x}^k,{\bm z}^k) -L^* - \alpha_k G_{{\bm z}^k,\lambda_k}({\bm x^k}) + \frac{L_{\Psi}({\bm x}^k,{\bm z}^k)+L_f}{2}\alpha_k^2 \|{\bm v}^k-{\bm x}^k\|^2.
   \label{prop-rate-T-3} 
    \end{align}
  Also, from \eqref{lemma-psi-1} and the bound on $\|[{\bm g}({\bm x}^k)]_+\|$ in Theorem~\ref{lemma-fea-fun-bound}, there exists $c_1 >0$ such that 
    \begin{align}\label{prop-rate-T-3-1}
    \cL_{\lambda_{k+1}}({\bm x}^{k+1},{\bm z}^{k+1})\!-\!\cL_{\lambda_k}({\bm x}^{k+1},{\bm z}^k)
    \!\leq\! c_1 \max\bigg\{\frac{1}{\lambda_k}, \alpha^2_k\lambda_k, T_k\bigg\} \frac{\sigma_k \!+\! \lambda_{k+1}\!-\!\lambda_k}{\lambda_k}\ \ \forall k\ge 0.
    \end{align}
    Then, the inequality \eqref{prop-rate-T-1} follows from the \eqref{prop-rate-T-3} and \eqref{prop-rate-T-3-1}.
    In addition, we have from \eqref{prop-rate-T-1} that for all $k\ge 0$,
     \begin{align}
   T_{k+1}  
   \leq & (1 - \alpha_k) T_k \!+ \! \frac{L_{\Psi}({\bm x}^k,{\bm z}^k)\!+\!L_f}{2}\alpha_k^2 \|{\bm v}^k\!-\!{\bm x}^k\|^2 
  \! +\! c_1 \max\bigg\{\frac{1}{\lambda_k}, \alpha^2_k\lambda_k,T_k\bigg\} \frac{\sigma_k \!+\! \lambda_{k+1} \!-\! \lambda_k}{\lambda_k}\notag \\
      &\!\leq\! (1 - \alpha_k)T_k + 
   \underbrace{\frac{\lambda_k + c_2}{2}c_3\alpha_k^2 +c_1 \max\bigg\{\frac{1}{\lambda_k}, \alpha^2_k\lambda_k, T_k\bigg\} \frac{\sigma_k + \lambda_{k+1}-\lambda_k}{\lambda_k}}_{=:\beta_k}, \label{prop-rate-T-4}
    \end{align}
    where the first inequality follows from the definition of $T_k$ in \eqref{def-Tk} and Lemma~\ref{lemma-G-T},
    and the second inequality holds for some positive constants $c_2$ and $c_3$
    since we have $\|{\bm v}^k-{\bm x}^k\|\leq D$ for any $k$ by Assumption~\ref{assu-function}(ii) and $L_{\Psi}({\bm x}^k,{\bm z}^k)=\mathcal{O}(\lambda_k)$ thanks to \eqref{Lip-Lpsi}.
    This establishes the claimed difference inequality for $T_k$ in \eqref{prop-rate-T-1-1}. 

   Under Assumption~\ref{assu-parameter}, there exist $c_4$, $c_5>0$ and positive integer $k_0$ such that for all $k\geq k_0$
   \begin{align}
   &\frac{\lambda_k + c_2}{2}c_3\alpha_k^2 \leq \frac{c_4}{(k+1)^{2p-\tau}},\ \
   c_1\frac{\sigma_k + \lambda_{k+1}-\lambda_k}{\lambda_k}\leq \frac{c_4}{k+1},
   \label{prop-rate-T-5}\\ 
   &-\alpha_k +\frac{c_4}{k+1}\leq -0.5\alpha_k, \ \ \frac{c_4}{\lambda_k}\le \frac{c_5}{(k+1)^\tau},\ \ \frac{\alpha_k^2\lambda_kc_4}{k+1}\le \frac{c_5}{(k+1)^{2p-\tau}}.\label{prop-rate-T-522}
   \end{align}
    
    From \eqref{prop-rate-T-4}, \eqref{prop-rate-T-5} and \eqref{prop-rate-T-522}, we have for any $k\geq k_0$ that
   \begin{align*}
   T_{k+1}
        \leq  & (1-\alpha_k)  T_k +\frac{c_4}{(k+1)^{2p-\tau}}
		+ \left(\frac{1}{\lambda_k}+\alpha^2_k\lambda_k+T_k\right) \frac{c_4}{k+1} \\
        \leq & \left(1-\alpha_k+\frac{c_4}{k+1}\right)  T_k 
        +\frac{c_4+c_5}{(k+1)^{2p-\tau}}
		+\frac{c_5}{(k+1)^{1+\tau}} \\
  \leq & \left(1 - \frac{\alpha_k}{2} \right)  T_k +\frac{c_4+c_5}{(k+1)^{2p-\tau}}
		+\frac{c_5}{(k+1)^{1+\tau}}.
\end{align*}
Since $\min\{ 2p-\tau, 1+\tau\} > p$,
the relation \eqref{prop-rate-T-2} follows from Proposition~\ref{prop_phi1}.

Next, we show \eqref{prop-rate-TG-2}.
We have from \eqref{prop-rate-T-3} that for all $i\ge 0$,
\begin{align*}
&\alpha_i G_{{\bm z}^i,\lambda_i}({\bm x^i}) \leq (\cL_{\lambda_i}({\bm x}^i,{\bm z}^i) -L^*)-(\cL_{\lambda_{i+1}}({\bm x}^{i+1},{\bm z}^{i+1})- L^*) +\beta_i,
   \end{align*}
   where we obtain $\beta_i$ as in \eqref{prop-rate-T-4}.
  Note that we have $|\cL_{\lambda_k}({\bm x}^k,{\bm z}^k)-L^*|\rightarrow 0$ by Theorem~\ref{the-con-0} (this theorem is applicable thanks to $T_k\rightarrow 0$; see \eqref{prop-rate-T-2}).
  Let $k_1\geq k_0$ be a positive integer such that $|\cL_{\lambda_{k}}({\bm x}^{k},{\bm z}^{k})- L^*|\leq 1$ for any $k\geq k_1$.
  Summing both sides of the last display from $i=0$ to some $k$ with $k\geq k_1$, we obtain
\begin{align}\textstyle
\sum_{i=0}^{k} \alpha_i G_{{\bm z}^i, \lambda_i}({\bm x}^{i})\leq (|\cL_{\lambda_{0}}({\bm x}^{0},{\bm z}^{0})-L^*|+1)+ \sum_{i=0}^{k}\beta_i.  \label{prop-rate-T-6}
  \end{align}
Now, from $p>\tau$ (see Assumption~\ref{assu-parameter}(ii)), $\lambda_k=\Theta(k^{\tau})$  (see Assumption~\ref{assu-parameter}(i)), \eqref{prop-rate-T-2} and the second inequality in \eqref{prop-rate-T-5}, we see that the second term in $\beta_k$ is summable. Then, we have
\begin{equation*}
\sum_{i=0}^k\beta_i=\mathcal{O}\bigg(\sum_{i=0}^k\frac{\lambda_i+c_2}{2} c_3 \alpha_i^2\bigg)=
\begin{cases}
\mathcal{O}(1) & \mbox{if $2p-\tau >1$}, \\
\mathcal{O}(\log(k))  & \mbox{if $2p-\tau =1$},  \\
\mathcal{O}(k^{1-(2p-\tau)})  & \mbox{if $2p-\tau <1$}.
\end{cases}
\end{equation*}
In addition, $\Gamma_k=\sum_{i=0}^k \alpha_i=\Theta(k^{1-p})$ with $p\in (0,1)$.
Based on these observations, the relation \eqref{prop-rate-TG-2} now follows immediately upon dividing both sides of \eqref{prop-rate-T-6} by $\Gamma_k$.
\end{proof}

We are now ready to present the following convergence result of Algorithm~\ref{alg} under Assumption~\ref{assu-parameter}.
\begin{theorem}[Global convergence and complexity: open-loop stepsize]\label{the-con}
Suppose that Assumptions~\ref{assu-function} and \ref{assu-parameter} hold.
	Let  $\{({\bm x}^k,{\bm z}^k)\}$ be generated by Algorithm \ref{alg} and $L^*$ be the optimal value of problem \eqref{problem}. 
		Let  $(p,\tau)$ be given in Assumption \ref{assu-parameter}.
	Then
        \begin{equation*}
	\max\{|f({\bm x}^k)+h({\bm x}^k)-L^*|, \|[{\bm g}({\bm x}^k)]_+\|\} = \mathcal{O}(\max\{k^{-(p-\tau)},k^{-\tau} \}).
\end{equation*}
If, in addition, problem \eqref{problem} has a Slater point, then there exists a subsequence of $\{({{\bm x}^k,[\lambda_k {\bm g}({\bm x}^k)+{\bm z}^k]_+})\}$ such that all of its cluster points are KKT points.
\end{theorem}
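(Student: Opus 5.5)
The argument combines Theorem~\ref{lemma-fea-fun-bound} with the rate for $T_k$ in Proposition~\ref{prop-rate-T}, and then obtains the KKT statement from Theorem~\ref{the-con-0}.

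\textbf{Rate.} Under Assumptions~\ref{assu-function} and \ref{assu-parameter}, \eqref{prop-rate-T-2} gives
\[
T_k=\mathcal{O}(\max\{k^{-(p-\tau)},k^{-(1-p+\tau)}\}).
\]
Assumption~\ref{assu-parameter}(i) gives $1/\lambda_k=\Theta(k^{-\tau})$. Since $p<1$, we have $1-p+\tau>\tau$, so $k^{-(1-p+\tau)}=\mathcal{O}(k^{-\tau})$. Hence
\[
\max\{T_k,1/\lambda_k\}=\mathcal{O}(\max\{k^{-(p-\tau)},k^{-\tau}\}).
\]
Theorem~\ref{lemma-fea-fun-bound} bounds both $\|[{\bm g}({\bm x}^k)]_+\|$ and $|f({\bm x}^k)+h({\bm x}^k)-L^*|$ by $\mathcal{O}(\max\{T_k,1/\lambda_k\})$. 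This gives the claimed rate.

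\textbf{KKT assertion.} We invoke the second part of Theorem~\ref{the-con-0} under the Slater assumption. Condition \eqref{general_conditions} holds because $p>\tau$, so both exponents in \eqref{prop-rate-T-2} are positive and $T_k\to 0$. For \eqref{the-con-1-3}, take $\xi_i:=\alpha_i>0$ and $\iota:=1$. The left-hand side of \eqref{the-con-1-3} is then exactly the weighted average $\sum_{i=0}^k \frac{\alpha_i}{\Gamma_k}G_{{\bm z}^i,\lambda_i}({\bm x}^i)$ controlled in \eqref{prop-rate-TG-2}.

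It remains to check that each case of \eqref{prop-rate-TG-2} tends to zero.
\begin{enumerate}[(a)]
\item If $2p-\tau>1$, the bound is $\mathcal{O}(k^{-(1-p)})$, which vanishes since $p<1$.
\item If $2p-\tau=1$, the bound is $\mathcal{O}(\log k/k^{1-p})$, which vanishes for the same reason.
\item If $2p-\tau<1$, the bound is $\mathcal{O}(k^{-(p-\tau)})$, which vanishes since $p>\tau$.
\end{enumerate}
Theorem~\ref{the-con-0} then yields a subsequence of $\{({\bm x}^k,[\lambda_k{\bm g}({\bm x}^k)+{\bm z}^k]_+)\}$ all of whose cluster points are KKT points.

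The only step needing care is the exponent comparison that lets the term $k^{-(1-p+\tau)}$ be absorbed into $k^{-\tau}$. Everything else is direct substitution into earlier results.
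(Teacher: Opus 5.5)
Your proposal is correct and follows the paper's proof: you combine \eqref{prop-rate-T-2} with Theorem~\ref{lemma-fea-fun-bound} for the rate, then apply the last assertion of Theorem~\ref{the-con-0} with $\xi_i=\alpha_i$, $\iota=1$, using \eqref{prop-rate-TG-2}. You also spell out two steps the paper leaves implicit: absorbing $k^{-(1-p+\tau)}$ into $k^{-\tau}$ (valid since $p<1$), and checking that each case of \eqref{prop-rate-TG-2} vanishes.
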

\begin{proof}
Using \eqref{prop-rate-T-2} and Theorem~\ref{lemma-fea-fun-bound}, we deduce the desired complexity bounds on feasibility violation and primal suboptimality.

Next, since $\sum_{i=0}^k \frac{\alpha_i}{\Gamma_k}G_{{\bm z}^i, \lambda_i}({\bm x}^{i})\rightarrow 0$ (see \eqref{prop-rate-TG-2}),
the second conclusion follows from the last assertion in Theorem~\ref{the-con-0}.
\end{proof}

 \begin{remark}\label{remark-dmin-rate}
By taking $\tau=0.5$, we see from Theorem~\ref{the-con} that the rate of $\max\{|f({\bm x}^k)+h({\bm x}^k)-L^*|, [{\bm g}({\bm x}^k)]_+\|\}\rightarrow 0$ can be made arbitrarily close to $\mathcal{O}(1/\sqrt{k})$ by choosing $p \in (0,1)$ arbitrarily close to $1$. 
 \end{remark}

\subsection{The short stepsize}

In this section, we consider the following assumption for $\{\alpha_k\}$, $\{\lambda_k\}$ and $\{\sigma_k\}$ in Algorithm \ref{alg}.

\begin{assumption}[Assumptions for  parameters in Algorithm \ref{alg}]\label{assu-parameter-ss}
	\begin{enumerate}[{\rm (i)}]
			\item The positive penalty parameter $\{\lambda_k\}$ is increasing, $\lambda_k=\Theta(k^\tau)$ and $\lambda_{k+1} -\lambda_k = \mathcal{O}(k^{-(1-\tau)})$ for some $\tau \in (0,1)$.
			\item  The $\alpha_k$ is the short stepsize in \eqref{short_stepsize_haha} with $({\bm x}^k,{\bm z}^k,{\bm v}^k,\lambda_k)$ in place of $({\bm x},{\bm z},{\bm v}^+,\lambda)$.
            \item The positive sequence  $\{\sigma_k\}$ satisfies  $\sigma_k\leq \lambda_k$ and 
			$\sigma_k=\Theta(k^{-(1+\gamma)})$ for some $\gamma>0$.
	\end{enumerate}
\end{assumption}

It now suffices to study the convergence behaviors of $\{T_k\}$ and $\{G_{{\bm z}^k,\lambda_k}({\bm x}^k)\}$ before we can invoke Theorem~\ref{lemma-fea-fun-bound} and~\ref{the-con-0} to deduce global convergence and complexity. To this end, we show in the next proposition that $\{T_k\}$ satisfies a certain difference inequality and establish the convergence rate of $\{T_k\}$ and a weighted average of $\{(G_{{\bm z}^k,\lambda_k}({\bm x}^k))^2\}$.

\begin{proposition}\label{prop-rate-T-ss}
Suppose that Assumptions~\ref{assu-function} and \ref{assu-parameter-ss} hold.
	Let $\{({\bm x}^k,{\bm z}^k)\}$ be generated by Algorithm \ref{alg} and $L^*$ be the optimal value of problem \eqref{problem}. Let $\cL_\lambda$ be defined in \eqref{lagrangian}, $G_{{\bm z}, \lambda}({\bm x})$ be defined in \eqref{def-G}, and $T_k$ be given in \eqref{def-Tk}.  Then, there exist some positive constants $c_1$, $c_2$, $c_3$ and positive integer $k_0$ such that for all $k\ge k_0$,
		\begin{align}
       & \cL_{\lambda_{k+1}}({\bm x}^{k+1},{\bm z}^{k+1}) -L^*\notag\\
       &\quad\quad\le (\cL_{\lambda_{k}}({\bm x}^{k},{\bm z}^{k}) -L^*) - \frac{1}{4} \alpha_k G_{{\bm z}^k,\lambda_k}({\bm x}^k)+ \frac{c_2}{(k+1)^{1+\tau}} + \frac{c_3}{(k+1)^{2-\tau}}, \label{prop-rate-T-ss-0}\\
&T_{k+1}\leq T_k \max\left\{\frac{3}{4}, 1-\frac{T_k}{4c_1 \lambda_k} \right\} + \frac{c_2}{(k+1)^{1+\tau}}+\frac{c_3}{(k+1)^{2-\tau}}. \label{prop-rate-T-ss-1}
		\end{align}
Moreover, it holds that
	\begin{align}\label{prop-rate-T-ss-2}
    	T_k = \mathcal{O}(\max\{1/\sqrt{k},1/k^{1-\tau}\})
    \,\, \mbox{and}\,\,
   \sum_{i=0}^k\! \frac{\lambda_i^{-1}}{\Lambda_k} (G_{{\bm z}^i, \lambda_i}({\bm x}^{i}))^2 = \mathcal{O}(1/k^{1-\tau}),
	\end{align}
where $\Lambda_k:=\sum_{i=0}^k \lambda_i^{-1}$, and $\tau$ is given in Assumption~\ref{assu-parameter-ss}.
\end{proposition}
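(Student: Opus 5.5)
The plan is to run the argument of Proposition~\ref{prop-rate-T} again, replacing the open-loop estimate by the short-stepsize estimates: Lemma~\ref{lemma-od} (second part) and \eqref{lemma-psi-2} of Lemma~\ref{lemma-psi}.

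\textbf{Step 1: one-step inequality.} By Lemma~\ref{lemma-od},
\[
\cL_{\lambda_k}({\bm x}^{k+1},{\bm z}^k)\le \cL_{\lambda_k}({\bm x}^k,{\bm z}^k)-\tfrac12\alpha_kG_k,\qquad G_k:=G_{{\bm z}^k,\lambda_k}({\bm x}^k).
\]
Theorem~\ref{lemma-fea-fun-bound} gives $\|[{\bm g}({\bm x}^k)]_+\|^2=\mathcal{O}(T_k/\lambda_k+1/\lambda_k^2)$. Substituting this into \eqref{lemma-psi-2} yields
\[
\cL_{\lambda_{k+1}}({\bm x}^{k+1},{\bm z}^{k+1})-\cL_{\lambda_k}({\bm x}^{k+1},{\bm z}^k)\le c\Big(\tfrac1{\lambda_k^2}+\alpha_kG_k+\tfrac{T_k}{\lambda_k}\Big)(\sigma_k+\lambda_{k+1}-\lambda_k).
\]
By Assumption~\ref{assu-parameter-ss}, $\sigma_k+\lambda_{k+1}-\lambda_k=\mathcal{O}(k^{-(1-\tau)})\to0$. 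We then treat the three terms separately:
\begin{itemize}
\item For $k\ge k_0$, the $\alpha_kG_k$ term is at most $\tfrac18\alpha_kG_k$.
\item The $1/\lambda_k^2$ term is $\mathcal{O}(k^{-2\tau}k^{-(1-\tau)})=\mathcal{O}(k^{-(1+\tau)})$, which produces the $c_2$ term.
\item The delicate term is $\frac{T_k}{\lambda_k}(\lambda_{k+1}-\lambda_k+\sigma_k)=\mathcal{O}(T_k/k)$. Young's inequality with $\lambda_k=\Theta(k^\tau)$ gives
\[
T_k/k\le \epsilon\,T_k^2/\lambda_k+C_\epsilon\,k^{-(2-\tau)},
\]
which produces the $c_3$ term.
\end{itemize}

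\textbf{Step 2: absorbing $\epsilon T_k^2/\lambda_k$.} From \eqref{short_stepsize_haha}, together with $L_\Psi+L_f\le c_1'\lambda_k$ from \eqref{Lip-Lpsi} and $\|{\bm v}^k-{\bm x}^k\|\le D$, we get
\[
\alpha_kG_k\ge\min\{G_k,\;G_k^2/(c_1\lambda_k)\}.
\]
Lemma~\ref{lemma-G-T} gives $G_k\ge T_k$, so $\alpha_kG_k\ge\min\{T_k,T_k^2/(c_1\lambda_k)\}$. Choosing $\epsilon$ small relative to $1/c_1$, the term $\epsilon T_k^2/\lambda_k$ is absorbed by $\tfrac18\alpha_kG_k$ in the regime where the min is the quadratic term. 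In the other regime, $T_k\le c_1\lambda_k$ holds there, so $\epsilon T_k^2/\lambda_k\le\epsilon c_1T_k$ is absorbed as well. This yields \eqref{prop-rate-T-ss-0}.

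Taking positive parts (Lemma~\ref{lemma-G-T} makes the right-hand side control $T_{k+1}$) and using
\[
T_k-\tfrac14\min\{T_k,T_k^2/(c_1\lambda_k)\}=T_k\max\{\tfrac34,\,1-T_k/(4c_1\lambda_k)\},
\]
we obtain \eqref{prop-rate-T-ss-1}. I expect this absorption, and in particular getting the exponents $1+\tau$ and $2-\tau$ exactly, to be the main obstacle.

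\textbf{Step 3: rate for $T_k$.} Set $\phi_k:=T_k/(4c_1\lambda_k)$. Divide \eqref{prop-rate-T-ss-1} by $4c_1\lambda_{k+1}$ and use $\lambda_k/\lambda_{k+1}\le1$. This gives
\[
\phi_{k+1}\le\phi_k\max\{\tfrac34,1-\phi_k\}+\gamma_k,\qquad \gamma_k=\Theta(k^{-s}),\quad s=\min\{1+2\tau,2\}.
\]
Since $s\le2=1+1/\mu$ with $\mu=1$, Remark~\ref{comment_condition_gen} verifies \eqref{condition_for_gamma}. Proposition~\ref{prop_phi2} with $\eta=3/4$ and $\mu=1$ then gives $\phi_k=\mathcal{O}(\gamma_k^{1/2})=\mathcal{O}(\max\{k^{-1/2-\tau},k^{-1}\})$. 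Hence
\[
T_k=\lambda_k\phi_k=\mathcal{O}(\max\{k^{-1/2},k^{-(1-\tau)}\}).
\]

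\textbf{Step 4: weighted gap bound.} Sum \eqref{prop-rate-T-ss-0} from $k_0$ to $k$. By Theorem~\ref{the-con-0}, whose hypothesis $T_k\to0$ now holds, $|\cL_{\lambda_k}({\bm x}^k,{\bm z}^k)-L^*|$ is bounded. The error terms are summable because $1+\tau>1$ and $2-\tau>1$. Hence $\sum_i\alpha_iG_i=\mathcal{O}(1)$.

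Next, $G_i\le\|\nabla_{\bm x}F_{\lambda_i}({\bm x}^i,{\bm z}^i)\|D+\mathcal{O}(1)=\mathcal{O}(\lambda_i)$. With the bound from Step 2 this gives $\alpha_iG_i\ge G_i^2/(C\lambda_i)$, so $\sum_i\lambda_i^{-1}G_i^2=\mathcal{O}(1)$. Dividing by $\Lambda_k=\Theta(k^{1-\tau})$ gives the second relation in \eqref{prop-rate-T-ss-2}.
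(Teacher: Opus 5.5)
There is a genuine gap in your Step~2: the absorption of the Young remainder fails in one regime. The rest of your architecture is the paper's: Lemma~\ref{lemma-od} with \eqref{lemma-psi-2}, Young's inequality on the $T_k/\lambda_k$ cross term, the substitution $\phi_k=T_k/(4c_1\lambda_k)$ fed into Proposition~\ref{prop_phi2} with $(\eta,\mu)=(3/4,1)$, and telescoping for the weighted gap.

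You split according to which entry attains $\min\{T_k,T_k^2/(c_1\lambda_k)\}$. The quadratic entry is the minimum exactly when $T_k\le c_1\lambda_k$, so your ``other regime'' is $T_k>c_1\lambda_k$. There the only lower bound you have is $\alpha_kG_k\ge T_k$. In fact $G_k\ge T_k>c_1\lambda_k\ge (L_\Psi({\bm x}^k,{\bm z}^k)+L_f)\|{\bm v}^k-{\bm x}^k\|^2$ forces $\alpha_k=1$ there. Your claim that $T_k\le c_1\lambda_k$ ``holds there'' reverses that regime's defining inequality; the argument you give is the one valid in the first regime. When $T_k>c_1\lambda_k$, the remainder $\epsilon T_k^2/\lambda_k$ exceeds $\tfrac18T_k$ as soon as $T_k>\lambda_k/(8\epsilon)$. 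Nothing you have established rules this out, and you cannot appeal to $T_k\to0$, since that is derived from \eqref{prop-rate-T-ss-1}. So \eqref{prop-rate-T-ss-0} is not proved for all $k\ge k_0$.

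The repair is what the paper does in its case $\alpha_k=1$: do not use Young there, but bound the cross term linearly,
\[
c\,\frac{T_k}{\lambda_k}\,(\sigma_k+\lambda_{k+1}-\lambda_k)=\mathcal{O}(T_k/k)\le \tfrac18\,T_k\le\tfrac18\,\alpha_kG_k \quad\text{for all large } k,
\]
using Lemma~\ref{lemma-G-T}. The $1/\lambda_k^2$ and $\alpha_kG_k$ terms are handled as you did. Young's inequality is only needed when $\alpha_k\in(0,1)$. There $\alpha_k\ge G_k/(c_1\lambda_k)$ gives $\alpha_kG_k\ge G_k^2/(c_1\lambda_k)\ge T_k^2/(c_1\lambda_k)$, and your absorption with $\epsilon$ small relative to $1/c_1$ goes through. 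Alternatively, you could first prove an a priori bound $T_k=\mathcal{O}(\lambda_k)$ and shrink $\epsilon$, but that needs a separate argument.

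With this fix, Steps~3 and~4 are fine, with one justification missing in Step~4. The bound $h({\bm x}^i)-h({\bm v}^i)=\mathcal{O}(1)$ needs a reason. For example, $f({\bm x}^i)+h({\bm x}^i)\to L^*$ by Theorem~\ref{the-con-0}, and $h$ is bounded below on its bounded domain. Alternatively, argue as the paper does: $\alpha_iG_i\to0$ by \eqref{prop-rate-T-ss-0}, and $G_i>c_1\lambda_i$ would force $\alpha_i=1$, so eventually $G_i\le c_1\lambda_i$.
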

\begin{proof}
First, we note that for any $k\ge 0$,
\begin{equation}\label{prop-rate-T-ss-3-0}
(L_\Psi({\bm x}^k,{\bm z}^k)+L_f)\|{\bm x}^k-{\bm v}^k\|^2\leq (D^2 \varsigma + D^2 L_f / \lambda_0) \lambda_k = c_1\lambda_k.
\end{equation}
where the inequality follows from \eqref{Lip-Lpsi}, Assumption~\ref{assu-function}(ii) and the monotonicity of $\{\lambda_k\}$,
and the equality follows upon letting $c_1:= D^2 \varsigma + D^2 L_f / \lambda_0$. 
In addition, using \eqref{lemma-psi-2} in Lemma~\ref{lemma-psi}, the relation $\| [{\bm g}({\bm x}^k)]_+\| = \mathcal{O}(\max\{(T_k/\lambda_k)^\frac12,1/\lambda_k\})$ from Theorem~\ref{lemma-fea-fun-bound}, and the fact that $\sigma_k=\Theta(k^{-1-\gamma})$ with $\gamma>0$, we see that there exists $\hat{c}_2>0$ such that for any $k$
\begin{equation}\label{prop-rate-T-ss-3-1}
\cL_{\lambda_{k+1}}({\bm x}^{k+1},{\bm z}^{k+1})  -\cL_{\lambda_{k}}({\bm x}^{k+1},{\bm z}^{k}) \leq \bigg(\frac{1}{\lambda_k^2}+\alpha_k  G_{{\bm z}^k,\lambda_k}({\bm x}^k)+\frac{T_k}{\lambda_k}\bigg)\frac{\hat{c}_2}{(k+1)^{1-\tau}}.
\end{equation}
From Assumption~\ref{assu-parameter-ss}, we have that $\lambda_k=\Theta(k^{\tau})$ with $\tau\in (0,1)$.
From this, there exists a positive integer $k_0$ such that for any $k\geq k_0$
\begin{equation}\label{prop-rate-T-ss-3}
\begin{split}
\frac{1}{8} \ge \frac{\hat{c}_2}{(k+1)^{1-\tau}}\ \ {\rm and}\ \  0.5  -\frac{\hat{c}_2}{(k+1)^{1-\tau}}-\frac{\hat{c}_2}{\lambda_k (k+1)^{1-\tau}}\ge\frac{1}{4}.
\end{split}
\end{equation}
In addition, fixed some $\epsilon \in (0,1/(8\hat{c}_2c_1)]$, we have that 
\begin{align}\label{prop-rate-T-ss-3-2}
 \frac{1}{8c_1} \ge\hat{c}_2\epsilon.
\end{align}
Now, we claim that it holds for any $k\geq k_0$
\begin{align}
& 0.5 \alpha_k  G_{{\bm z}^k,\lambda_k}({\bm x}^k) - \bigg(\alpha_k  G_{{\bm z}^k,\lambda_k}({\bm x}^k) + \frac{T_k}{\lambda_k}\bigg)\frac{\hat{c}_2}{(k+1)^{1-\tau}}+\frac{(4\epsilon)^{-1}}{\lambda_k}\frac{\hat{c}_2}{(k+1)^{2-2\tau}} \notag \\
\geq & 0.25\alpha_k G_{{\bm z}^k,\lambda_k}({\bm x}^k). \label{prop-rate-T-ss-5}
\end{align}
We will proceed by considering the cases $\alpha_k=0$, $\alpha_k\in (0,1)$ and $\alpha_k =1$, separately.
For the case that $\alpha_k =0$, we have that $G_{{\bm z}^k,\lambda_k}({\bm x}^k)= 0$ and thus $T_k=0$ thanks to Lemma~\ref{lemma-G-T}. It is trivial that \eqref{prop-rate-T-ss-5} holds.
If $\alpha_k\in (0,1)$,  by the definition of the short stepsize and \eqref{prop-rate-T-ss-3-0},  we have
\begin{align}\label{prop-rate-T-ss-4}
\alpha_k = G_{{\bm z}^k,\lambda_k}({\bm x}^k)/((L_\Psi({\bm x}^k,{\bm z}^k)+L_f)\|{\bm x}^k-{\bm v}^k\|^2)\geq G_{{\bm z}^k,\lambda_k}({\bm x}^k)/(c_1 \lambda_k).
\end{align}
Hence, for $\alpha_k\in (0,1)$, it holds that
\begin{align*}
& 0.5 \alpha_k  G_{{\bm z}^k,\lambda_k}({\bm x}^k) - \bigg(\alpha_k  G_{{\bm z}^k,\lambda_k}({\bm x}^k) + \frac{T_k}{\lambda_k}\bigg)\frac{\hat{c}_2}{(k+1)^{1-\tau}}+\frac{(4\epsilon)^{-1}}{\lambda_k}\frac{\hat{c}_2}{(k+1)^{2-2\tau}} \notag \\
=& 0.5 \alpha_k  G_{{\bm z}^k,\lambda_k}({\bm x}^k) - \frac{\hat{c}_2\alpha_k  G_{{\bm z}^k,\lambda_k}({\bm x}^k)}{(k+1)^{1-\tau}} -\frac{T_k}{(k+1)^{1-\tau}} \frac{\hat{c}_2}{\lambda_k}+\frac{1}{4\epsilon \lambda_k} \frac{\hat{c}_2}{(k+1)^{2-2\tau}}\\
\geq& \bigg(0.5 \alpha_k-\frac{\hat{c}_2}{(k+1)^{1-\tau}}\alpha_k- \frac{\hat{c}_2\epsilon G_{{\bm z}^k,\lambda_k}({\bm x}^k)}{\lambda_k}\bigg)  G_{{\bm z}^k,\lambda_k}({\bm x}^k)  \\
\geq& \frac{\alpha_k}{4} G_{{\bm z}^k,\lambda_k}({\bm x}^k) + \bigg(\frac{\alpha_k}{8} + \frac{G_{{\bm z}^k,\lambda_k}({\bm x}^k)}{8c_1 \lambda_k}\!-\!\frac{\hat{c}_2}{(k+1)^{1-\tau}}\alpha_k - \frac{\hat{c}_2\epsilon G_{{\bm z}^k,\lambda_k}({\bm x}^k)}{\lambda_k} \bigg)G_{{\bm z}^k,\lambda_k}({\bm x}^k) \\
\geq & 0.25\alpha_k G_{{\bm z}^k,\lambda_k}({\bm x}^k),
\end{align*}
where the first inequality follows from Lemma~\ref{lemma-G-T} and the fact that 
\begin{align*}
\frac{T_k} {(k+1)^{1-\tau}} = (\sqrt{2\epsilon}T_k) (\sqrt{2\epsilon}^{-1}(k+1)^{\tau-1})\leq \epsilon T_k^2+ \frac{1}{4\epsilon(k+1)^{2-2\tau}},
\end{align*}
  the second inequality follows from \eqref{prop-rate-T-ss-4}, 
  and the last inequality follows from \eqref{prop-rate-T-ss-3} and \eqref{prop-rate-T-ss-3-2}.
If $\alpha_k=1$, then we have
 \begin{align*}
 &  0.5 \alpha_k  G_{{\bm z}^k,\lambda_k}({\bm x}^k) - \bigg(\alpha_k  G_{{\bm z}^k,\lambda_k}({\bm x}^k) + \frac{T_k}{\lambda_k}\bigg)\frac{\hat{c}_2}{(k+1)^{1-\tau}} \\
 \geq & \bigg(0.5  -\frac{\hat{c}_2}{(k+1)^{1-\tau}}-\frac{\hat{c}_2}{\lambda_k (k+1)^{1-\tau}}\bigg) G_{{\bm z}^k,\lambda_k}({\bm x}^k)\geq 0.25\alpha_k G_{{\bm z}^k,\lambda_k}({\bm x}^k),
\end{align*}
where the first inequality follows from Lemma~\ref{lemma-G-T} and the fact that $\alpha_k=1$, and we use \eqref{prop-rate-T-ss-3} and the fact $\alpha_k=1$ for the second inequality. Therefore, we have verified \eqref{prop-rate-T-ss-5}.

  Applying Lemma \ref{lemma-od} with $({\bm x}^{k+1},{\bm x}^k,{\bm z}^k,{\bm v}^k,\alpha_k,\lambda_{k})$ in place of the sextuple $({\bm x}^+,{\bm x},{\bm z},{\bm v}^+,\alpha,\lambda)$ in which $\alpha_k$ is the short stepsize in Assumption \ref{assu-parameter-ss}(ii), we have
   \begin{align}\label{prop-rate-T-ss-6}
\mathcal{L}_{\lambda_k}({\bm x}^{k+1},{\bm z}^k)
\leq & \mathcal{L}_{\lambda_k}({\bm x}^k,{\bm z}^k) - 0.5\alpha_k G_{{\bm z}^k,\lambda_k}({\bm x}^k).
	\end{align}
  For any $k\geq k_0$, it holds that
    \begin{align}
    &(\cL_{\lambda_{k+1}}({\bm x}^{k+1},{\bm z}^{k+1}) -L^*)-(\cL_{\lambda_{k}}({\bm x}^{k},{\bm z}^{k}) -L^* )
   \notag\\
   \overset{\rm (a)}{\leq}   & - 0.5 \alpha_k G_{{\bm z}^k,\lambda_k}({\bm x}^k)+\cL_{\lambda_{k+1}}({\bm x}^{k+1},{\bm z}^{k+1})  -\cL_{\lambda_{k}}({\bm x}^{k+1},{\bm z}^{k}) \notag \\
    \overset{\rm (b)}\leq & - 0.5 \alpha_k G_{{\bm z}^k,\lambda_k}({\bm x}^k) + \bigg(\frac{1}{\lambda_k^2}+\alpha_k  G_{{\bm z}^k,\lambda_k}({\bm x}^k)+\frac{T_k}{\lambda_k}\bigg)\frac{\hat{c}_2}{(k+1)^{1-\tau}}  \notag\\
    = & - 0.5 \alpha_k  G_{{\bm z}^k,\lambda_k}({\bm x}^k) + \bigg(\alpha_k  G_{{\bm z}^k,\lambda_k}({\bm x}^k) + \frac{T_k}{\lambda_k}\bigg)\frac{\hat{c}_2}{(k+1)^{1-\tau}}
    - \frac{(4\epsilon)^{-1}\hat{c}_2}{\lambda_k(k+1)^{2-2\tau}} \notag \\
    &+\frac{(4\epsilon)^{-1}\hat{c}_2}{\lambda_k(k+1)^{2-2\tau}}+\frac{1}{\lambda_k^2}\frac{\hat{c}_2}{(k+1)^{1-\tau}}  \notag  \\ 
     \overset{\rm (c)}\leq & - 0.25 \alpha_k G_{{\bm z}^k,\lambda_k}({\bm x}^k)+ \frac{c_2}{(k+1)^{1+\tau}}+ \frac{c_3}{(k+1)^{2-\tau}}, \notag  
    \end{align}
    where (a) follows from \eqref{prop-rate-T-ss-6},
    (b) follows from \eqref{prop-rate-T-ss-3-1},
    and (c) follows from \eqref{prop-rate-T-ss-5} for some constants $c_2\geq\hat{c}_2$ and $c_3 \geq (4\epsilon)^{-1}\hat{c}_2$ since $\lambda_k=\Theta(k^{\tau})$. This establishes \eqref{prop-rate-T-ss-0}.
    From the last display, when $G_{{\bm z}^k,\lambda_k}({\bm x}^k) > 0$, we have
     \begin{align*}
   &\cL_{\lambda_{k+1}}({\bm x}^{k+1},{\bm z}^{k+1}) -L^*\notag\\
   &\leq   T_k- 0.25 G_{{\bm z}^k,\lambda_k}({\bm x}^k)\min\bigg\{1, \frac{G_{{\bm z}^k,\lambda_k}({\bm x}^k)}{c_1\lambda_k}\bigg\}+ \frac{c_2}{(k+1)^{1+\tau}}+ \frac{c_3}{(k+1)^{2-\tau}}   \notag  \\
     &\leq   T_k \max\left\{\frac{3}{4}, 1-\frac{T_k}{4c_1 \lambda_k} \right\} + \frac{c_2}{(k+1)^{1+\tau}} + \frac{c_3}{(k+1)^{2-\tau}},  
    \end{align*}
    where the first inequality follows from \eqref{prop-rate-T-ss-3-0} and the definition of $T_k$ in \eqref{def-Tk}, and the second inequality follows from Lemma~\ref{lemma-G-T}. The above inequality clearly also holds when $G_{{\bm z}^k,\lambda_k}({\bm x}^k) = 0$ because this means $T_k = 0$ as well thanks to Lemma~\ref{lemma-G-T}.
    The above display together with the definition of $T_k$ proves \eqref{prop-rate-T-ss-1}.

Next, we prove the first relation in \eqref{prop-rate-T-ss-2}. Dividing both sides of \eqref{prop-rate-T-ss-1} by $4c_1\lambda_k$, we have upon using the substitution $\varphi_k := T_k/(4c_1\lambda_k)$ that for all $k\ge k_0$,
\begin{align}
\varphi_{k+1} =  \frac{T_{k+1}}{4c_1\lambda_{k+1}}\le  \frac{T_{k+1}}{4c_1\lambda_{k}} \le \varphi_k \max\bigg\{\frac{3}{4}, 1-\varphi_k\bigg\}  + \frac{c_4}{(k+1)^{1+2\tau}} + \frac{c_5}{(k+1)^{2}},\label{Recur_varphi}
\end{align}
where the first inequality holds because $\{\lambda_k\}$ is nondecreasing, and the last inequality holds for some constants $c_4\geq c_2$ and $c_5\geq c_3$ thanks to $\lambda_k=\Theta(k^{\tau})$.
In view of \eqref{Recur_varphi} and Remark~\ref{comment_condition_gen}, we can now invoke Proposition~\ref{prop_phi2} with $(\eta,\mu) =(3/4, 1)$ and $\gamma_k=c_4/(k+1)^{1+2\tau}+c_5/(k+1)^2$ to deduce that $\varphi_k =\mathcal{O}(\gamma_k^{1/2}) =\mathcal{O}(\max\{1/k^{1/2+\tau}, 1/k\})$. The first estimate in \eqref{prop-rate-T-ss-2} now follows immediately upon recalling $\varphi_k = T_k/(4c_1\lambda_k)$ and $\lambda_k = \Theta(k^\tau)$ (see Assumption~\ref{assu-parameter-ss}(i)).

Finally, we show the second estimate in \eqref{prop-rate-T-ss-2}. From \eqref{prop-rate-T-ss-0},
we have for all $i\ge k_0$,
\begin{align}
&\frac{G_{{\bm z}^i,\lambda_i}({\bm x}^i)}{4}\min\bigg\{1, \frac{G_{{\bm z}^i,\lambda_i}({\bm x}^i)}{c_1 \lambda_i} \bigg\} \notag \\
\leq& (\cL_{\lambda_i}({\bm x}^i,{\bm z}^i) -L^*) - (\cL_{\lambda_{i+1}}({\bm x}^{i+1},{\bm z}^{i+1})- L^*)
     +\frac{c_2}{(i+1)^{1+\tau}} +\frac{c_3}{(i+1)^{2-\tau}}, \label{prop-rate-T-ss-8}
   \end{align}
   where the first inequality follows from \eqref{prop-rate-T-ss-3-0}. 
   Since we have $|\cL_{\lambda_i}({\bm x}^i,{\bm z}^i) -L^*|\rightarrow 0$ thanks to Theorem~\ref{the-con-0} (this theorem is applicable because $T_k\rightarrow 0$; see the first relation in \eqref{prop-rate-T-ss-2}),
   there exists $k_1\geq k_0$ that $|\cL_{\lambda_i}({\bm x}^i,{\bm z}^i) -L^*|\leq  1$ for any  $i\geq k_1$. In addition, since the right-hand side of  \eqref{prop-rate-T-ss-8} goes to zero and $\lambda_i\rightarrow \infty$, we deduce from \eqref{prop-rate-T-ss-8} that there exists $k_2\geq k_1$ such that 
   \[
   G_{{\bm z}^i,\lambda_i}({\bm x}^i)\leq c_1\lambda_i
   \quad \forall\, i\geq k_2.
   \]
From these facts,  summing both sides of \eqref{prop-rate-T-ss-8} from $i=k_2$ to some $k>k_2$, we obtain
   \begin{align}\label{prop-rate-T-ss-9}
   \sum_{i=k_2}^k \frac{(G_{{\bm z}^i,\lambda_i}({\bm x}^i))^2}{4 c_1 \lambda_i}
   \leq 
   (|\cL_{\lambda_0}({\bm x}^0,{\bm z}^0) -L^*|+1)
     +\sum_{i= k_2}^k \left(\frac{c_2}{(i+1)^{1+\tau}}+\frac{c_3}{(i+1)^{2-\tau}}\right).
   \end{align}
   Dividing both sides of \eqref{prop-rate-T-ss-9} by $\Lambda_k=\sum_{i=0}^k \lambda_i^{-1}$ and noting that  $\Lambda_k=\Theta(k^{1-\tau})$ and the right-hand side of \eqref{prop-rate-T-ss-9} is bounded as $k\rightarrow \infty$ thanks to $\tau \in(0,1)$, we deduce that
 \begin{align*}\textstyle
   \Lambda_k^{-1}\sum_{i=k_2}^k \lambda_i^{-1} (G_{{\bm z}^i,\lambda_i}({\bm x}^i))^2
   =\mathcal{O}(1/k^{1-\tau}),
   \end{align*}
   proving the second estimate in \eqref{prop-rate-T-ss-2}.
\end{proof}

We are now ready to present the convergence result for Algorithm~\ref{alg} under Assumption~\ref{assu-parameter-ss}.
\begin{theorem}[Global convergence and complexity: short stepsize]\label{the-con-ss}
Suppose that Assumptions~\ref{assu-function} and \ref{assu-parameter-ss} hold.
	Let $\{({\bm x}^k,{\bm z}^k)\}$ be generated by Algorithm \ref{alg} and $L^*$ be the optimal value of problem \eqref{problem}. 
	Then
     \begin{equation*}
	\max\{|f({\bm x}^k)+h({\bm x}^k)-L^*|, \|[{\bm g}({\bm x}^k)]_+\|\} =\mathcal{O}(\max\{k^{-1/2}, k^{-(1-\tau)}, k^{-\tau}\}). 
\end{equation*}
If, in addition, problem \eqref{problem} has a Slater point, then there exists a subsequence of $\{({{\bm x}^k,[\lambda_k {\bm g}({\bm x}^k)+{\bm z}^k]_+})\}$ such that all of its cluster points are KKT points.
\end{theorem}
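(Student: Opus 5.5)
The argument mirrors the proof of Theorem~\ref{the-con}, with Proposition~\ref{prop-rate-T-ss} taking the place of Proposition~\ref{prop-rate-T}.

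For the complexity bound, we invoke Theorem~\ref{lemma-fea-fun-bound}. It gives
\[
\max\{|f({\bm x}^k)+h({\bm x}^k)-L^*|,\ \|[{\bm g}({\bm x}^k)]_+\|\}=\mathcal{O}(\max\{T_k,1/\lambda_k\}).
\]
By the first estimate in \eqref{prop-rate-T-ss-2}, $T_k=\mathcal{O}(\max\{k^{-1/2},k^{-(1-\tau)}\})$. By Assumption~\ref{assu-parameter-ss}(i), $\lambda_k=\Theta(k^\tau)$, so $1/\lambda_k=\mathcal{O}(k^{-\tau})$. Combining these three rates yields $\mathcal{O}(\max\{k^{-1/2},k^{-(1-\tau)},k^{-\tau}\})$. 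The bound is only needed for large $k$, and finitely many initial terms are absorbed into the constant, since all the quantities involved are bounded (Remark~\ref{remark-bound}).

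For the KKT assertion, we apply the last part of Theorem~\ref{the-con-0} and verify its two hypotheses.
\begin{enumerate}[\rm (i)]
\item Condition \eqref{general_conditions}, namely $T_k\to 0$, follows directly from the first estimate in \eqref{prop-rate-T-ss-2}, because $\tau\in(0,1)$.
\item Condition \eqref{the-con-1-3} holds with $\xi_i:=\lambda_i^{-1}>0$ and $\iota:=2$. With these choices the weighted average in \eqref{the-con-1-3} is exactly the quantity in the second estimate of \eqref{prop-rate-T-ss-2}, which is $\mathcal{O}(1/k^{1-\tau})$ and hence tends to $0$.
\end{enumerate}
One point needs care in (ii). The proof of Proposition~\ref{prop-rate-T-ss} bounds the sum only from index $k_2$ onward. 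The first $k_2$ terms contribute at most a constant divided by $\Lambda_k$, and $\Lambda_k=\Theta(k^{1-\tau})\to\infty$. So these terms vanish in the limit and \eqref{the-con-1-3} holds.

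With the Slater point assumed, Theorem~\ref{the-con-0} then produces the desired subsequence of $\{({\bm x}^k,[\lambda_k{\bm g}({\bm x}^k)+{\bm z}^k]_+)\}$ all of whose cluster points are KKT points. There is no real obstacle beyond this bookkeeping, because the substantive estimates were already established in Proposition~\ref{prop-rate-T-ss}.
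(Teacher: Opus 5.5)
Your proposal is correct and follows essentially the same route as the paper. The paper likewise combines the first estimate of Proposition~\ref{prop-rate-T-ss} with Theorem~\ref{lemma-fea-fun-bound}, and then invokes the last assertion of Theorem~\ref{the-con-0} with weights $\lambda_i^{-1}$ and exponent $2$. Your remark that the first $k_2$ terms of the weighted sum vanish after division by $\Lambda_k\to\infty$ is a valid piece of bookkeeping that the paper leaves implicit.
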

\begin{proof}
Using the first estimate in \eqref{prop-rate-T-ss-2} and Theorem~\ref{lemma-fea-fun-bound}, we deduce the desired complexity bounds on feasibility violation and primal suboptimality.

Finally, since $\sum_{i=0}^k \frac{\lambda_i^{-1}}{\Lambda_k}(G_{{\bm z}^i, \lambda_i}({\bm x}^{i}))^2\rightarrow 0$ (see the second relation in \eqref{prop-rate-T-ss-2}), the second conclusion follows from the last assertion in Theorem~\ref{the-con-0}.
\end{proof}

 \begin{remark}\label{remark-short-rate}
Taking $\tau=0.5$, we see from Theorem~\ref{the-con-ss} that Algorithm~\ref{alg} achieves the rate  $\max\{|f({\bm x}^k)+h({\bm x}^k)-L^*|, \|[{\bm g}({\bm x}^k)]_+\|\}=\mathcal{O}(1/\sqrt{k})$.
 \end{remark}

\section{Improved convergence rate under uniform convexity}\label{sec-con-ss}

It is known that the conditional gradient method with either the vanilla stepsize or the short stepsize enjoys a faster convergence rate when the constraint set admits some special structures such as uniform convexity; see, e.g., \cite{Garber-Hazan-2015,Kerdreux-dAspremont-Pokutta-2021, Wirth2025}. Inspired by this, in this section, we analyze Algorithm~\ref{alg} when $h$ is the indicator function of a uniformly convex set and show that one can achieve better convergence rates compared with those in Theorems~\ref{the-con} and \ref{the-con-ss}. To our knowledge, this is the first single-loop CG algorithm that demonstrates acceleration on an intersection of a uniformly convex set with convex inequalities. We first recall the definition of uniformly convex sets.

\begin{definition}[Uniformly convex sets \cite{Garber-Hazan-2015, Kerdreux-dAspremont-Pokutta-2021, Wirth2025,Wirth2023}]\label{def-u-convex}
A convex set $\mathcal{D}\subseteq \R^n$ is  said to be $(\nu,q)$-uniformly convex for some $q\geq 2$ and $\nu>0$
if, for all ${\bm x},{\bm y}\in \mathcal{D}$ and $t\in [0,1]$, it holds that
	\[
	t {\bm x} +  (1-t){\bm y}+  t (1-t)\nu\|{\bm x}-{\bm y}\|^q {\bm z}\in \mathcal{D}
	\]
whenever  ${\bm z}\in \R^n$ with $\|\bm z\|= 1$.
We say that $\mathcal{D}$ is strongly convex when $q=2$. 
\end{definition}


For given $\epsilon>0$, define the set
\begin{equation}\label{def-A}
\mathcal{A}_\epsilon:=\{ ({\bm x},{\bm z})\in {\bm g}^{-1}((-\infty,\epsilon]^m)\times \R^m_+ : \mbox{$\forall i$, if $g_i({\bm x})< -\epsilon$, then $z_i=0$}\}.
\end{equation}

In this section, we consider the following additional assumptions on \eqref{problem}.
\begin{assumption}\label{assu-uc-function}
	The following conditions hold for \eqref{problem}.
\begin{enumerate}[\rm (i)]
	\item Function $h$ is the indicator function of a convex set $\mathcal{C}$, i.e., $h={\rm Ind}_{\mathcal{C}}$, where $\mathcal{C}$ is compact and $(\nu,q)$-uniformly convex for some $q\ge 2$ and $\nu > 0$.
	\item For some $\epsilon >0$, it holds that
	\[\textstyle
	\zeta:=\inf\left\{\left\|\nabla f(\bm x)+ \sum_{i=1}^m  z_i\nabla g_i(\bm x)\right\| : {({\bm x},{\bm z})\in \mathcal{A}_\epsilon}, {\bm x}\in \mathcal{C} \right\}>0.
	\]
\end{enumerate}
\end{assumption}

In the absence of ${\bm g}(\bm x)\leq {\bm 0}$, Assumption \ref{assu-uc-function}(ii) reduces to the standard assumption $\inf_{{\bm x}\in \mathcal{C}}\|\nabla f({\bm x})\|>0$ used in the literature for the study of improved convergence rates of Frank-Wolfe algorithm; see, e.g., \cite{Garber-Hazan-2015, Kerdreux-dAspremont-Pokutta-2021}. The next proposition presents a sufficient condition for Assumption~\ref{assu-uc-function}(ii). Intuitively, condition \eqref{prop-suff-con-1} suggests that the set ${\cal C}$ is not redundant when the right hand sides of the inequality constraints in \eqref{problem} are slightly perturbed.

\begin{proposition}[A sufficient condition for Assumption~\ref{assu-uc-function}(ii)]\label{prop-suff-con}
Let Assumptions~\ref{assu-function} and \ref{assu-uc-function}(i) hold.
Suppose that there exist $\epsilon>0$ and $\tilde{\bm x}$ such that $g_i(\tilde {\bm x})< -\epsilon$ for all $i$, and that for all ${\bm \eta}\in[-\epsilon,\epsilon]^m$, it holds that 
\begin{align}\label{prop-suff-con-1}
\Argmin_{{\bm x}\in \R^n}\{f({\bm x}):\; {\bm g}({\bm x}) \le {\bm \eta}\} \cap {\cal C} = \varnothing. 
\end{align}
Then Assumption~\ref{assu-uc-function}(ii) holds with the above $\epsilon$.
\end{proposition}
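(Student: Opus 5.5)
We argue by contradiction combined with compactness. Suppose Assumption~\ref{assu-uc-function}(ii) fails for this $\epsilon$, i.e., $\zeta=0$. Then there is a sequence $({\bm x}^k,{\bm z}^k)\in\mathcal{A}_\epsilon$ with ${\bm x}^k\in\mathcal{C}$ and $\|\nabla f({\bm x}^k)+\sum_i z_i^k\nabla g_i({\bm x}^k)\|\to 0$. Since $\mathcal{C}$ is compact, we may pass to a subsequence along which ${\bm x}^k\to\bar{\bm x}\in\mathcal{C}$. By continuity of ${\bm g}$ we then have $g_i(\bar{\bm x})\le\epsilon$ for all $i$.

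The first step is to show that $\{{\bm z}^k\}$ is bounded, using the Slater-type point $\tilde{\bm x}$. If it were unbounded, we would normalize by $\|{\bm z}^k\|$ and pass to a limit $\bar{\bm w}\ge{\bm 0}$ with $\|\bar{\bm w}\|=1$ and $\sum_i\bar w_i\nabla g_i(\bar{\bm x})={\bm 0}$, since $\nabla f$ is bounded on $\mathcal{C}$. Now set ${\bm \eta}:=\lim{\bm g}({\bm x}^k)$ along a further subsequence, so ${\bm \eta}={\bm g}(\bar{\bm x})$. The complementarity built into $\mathcal{A}_\epsilon$ is only approximate: $z_i^k=0$ whenever $g_i({\bm x}^k)<-\epsilon$. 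So in the limit $\bar w_i>0$ forces $g_i(\bar{\bm x})\ge-\epsilon$.

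To handle this cleanly, define the perturbation $\eta_i:=\max\{g_i(\bar{\bm x}),-\epsilon\}\in[-\epsilon,\epsilon]$. Then ${\bm g}(\bar{\bm x})\le{\bm \eta}$, and $\bar w_i>0\Rightarrow g_i(\bar{\bm x})=\eta_i$. By convexity,
\[
0=\Big\langle\sum_i\bar w_i\nabla g_i(\bar{\bm x}),\tilde{\bm x}-\bar{\bm x}\Big\rangle\le\sum_i\bar w_i\big(g_i(\tilde{\bm x})-\eta_i\big).
\]
Each active term satisfies $g_i(\tilde{\bm x})-\eta_i<-\epsilon+\epsilon=0$, so the right-hand side is negative, a contradiction. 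Hence $\{{\bm z}^k\}$ is bounded. Passing to a further subsequence, ${\bm z}^k\to\bar{\bm z}\ge{\bm 0}$ with
\[
\nabla f(\bar{\bm x})+\sum_i\bar z_i\nabla g_i(\bar{\bm x})={\bm 0},
\]
and with $\bar z_i>0\Rightarrow g_i(\bar{\bm x})\ge-\epsilon$, hence $g_i(\bar{\bm x})=\eta_i$. Here the limit points with $g_i(\bar{\bm x})<-\epsilon$ have $g_i({\bm x}^k)<-\epsilon$ eventually, so $z_i^k=0$ eventually.

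The last step uses the KKT sufficiency of convex programs. The pair $(\bar{\bm x},\bar{\bm z})$ satisfies the KKT conditions for $\min\{f({\bm x}):{\bm g}({\bm x})\le{\bm \eta}\}$: stationarity, primal feasibility ${\bm g}(\bar{\bm x})\le{\bm \eta}$, $\bar{\bm z}\ge{\bm 0}$, and complementarity $\bar z_i(g_i(\bar{\bm x})-\eta_i)=0$. Since $f$ and $g_i$ are convex and differentiable, for any feasible ${\bm x}$ we get
\[
f({\bm x})\ge f(\bar{\bm x})+\langle\nabla f(\bar{\bm x}),{\bm x}-\bar{\bm x}\rangle=f(\bar{\bm x})-\sum_i\bar z_i\langle\nabla g_i(\bar{\bm x}),{\bm x}-\bar{\bm x}\rangle\ge f(\bar{\bm x})-\sum_i\bar z_i\big(g_i({\bm x})-\eta_i\big)\ge f(\bar{\bm x}).
\]
So $\bar{\bm x}$ belongs to the Argmin in \eqref{prop-suff-con-1} and also lies in $\mathcal{C}$, contradicting \eqref{prop-suff-con-1} for this ${\bm \eta}\in[-\epsilon,\epsilon]^m$.

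The main obstacle is the choice of ${\bm \eta}$. Using ${\bm \eta}={\bm g}(\bar{\bm x})$ directly may leave components below $-\epsilon$, so the clipping at $-\epsilon$ is needed to restore exact complementarity. We must also check that the limit preserves the implication "$\bar z_i>0\Rightarrow g_i(\bar{\bm x})\ge-\epsilon$"; this holds because $z_i^k=0$ whenever $g_i({\bm x}^k)<-\epsilon$.
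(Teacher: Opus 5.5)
Your proof is correct and takes essentially the same route as the paper. Both argue by contradiction from $\zeta=0$, bound $\{\bm z^k\}$ by normalizing and using the strictly feasible point $\tilde{\bm x}$, and then use the clipped perturbation $\eta_i=\max\{g_i(\bar{\bm x}),-\epsilon\}$ to obtain exact KKT conditions for the perturbed problem, which contradicts \eqref{prop-suff-con-1}. The differences are cosmetic: you already use the clipped $\bm\eta$ in the boundedness step, where the paper bounds $g_i(\tilde{\bm x})-g_i(\bar{\bm x})<-\epsilon-g_i(\bar{\bm x})$ directly, and you write out the KKT-sufficiency inequality that the paper leaves implicit.
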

\begin{proof}
Suppose to the contrary that the $\zeta$ defined in Assumption~\ref{assu-uc-function}(ii) is zero. Let $\{({\bm x}^k,{\bm z}^k)\}\subset\mathcal{A}_\epsilon$ with $\{{\bm x}^k\}\subset {\cal C}$ be a sequence such that 
\begin{align}\label{prop-suff-con-2}
\textstyle\left\|\nabla f({\bm x}^k)+\sum_{i=1}^m z^k_i \nabla g_i({\bm x}^k)\right\|\rightarrow 0.
\end{align}
We claim that $\{{\bm z}^k\}$ is bounded. To see this, suppose to the contrary that $\{{\bm z}^k\}$ is unbounded. Let $\tilde{\mathcal{K}}$ be an index set such that $\lim_{k\to\infty, k\in {\tilde{\cal K}}}\|{\bm z}^k\| = \infty$ and $\{({\bm x}^k,{\bm z}^k/\|{\bm z}^k\|)\}_{k\in \tilde{\cal K}}$ is a convergent subsequence converging to $(\bar{\bm x},\bar{\bm z})$ for some
$\bar{\bm x}\in \mathcal{C}$ (as $\mathcal{C}$ is compact) and $\bar{\bm z}\geq {\bm 0}$ with $\|\bar{\bm z}\|=1$. Then we deduce from \eqref{prop-suff-con-2} and the definition of ${\cal A}_\epsilon$ that
\[\textstyle
\sum_{i=1}^m \bar z_i \nabla g_i(\bar{\bm x}) = {\bm 0}, \ {\rm and} \ \bar z_j = 0\ {\rm whenever}\ g_j(\bar {\bm x}) < -\epsilon.
\]
The above display together with the convexity of $g_i$ shows that
\begin{align*}
0 &\textstyle= \sum_{i=1}^m \bar z_i \nabla g_i(\bar{\bm x})^T(\tilde{\bm x} - \bar{\bm x})\le \sum_{i=1}^m \bar z_i (g_i(\tilde{\bm x}) - g_i(\bar{\bm x}))\\
&\textstyle<\sum_{i=1}^m \bar z_i (-\epsilon - g_i(\bar{\bm x})) = \sum_{i: g_i(\bar {\bm x}) \ge -\epsilon} \bar z_i (-\epsilon - g_i(\bar{\bm x})) \le 0,
\end{align*}
where the strict inequality holds in view of the fact that $\bar {\bm z}\neq 0$ and the definition of $\tilde{\bm x}$. This contradiction shows that $\{{\bm z}^k\}$ is bounded. 

Let $\{({\bm x}^k,{\bm z}^k)\}_{k\in {\cal K}}$ be a convergent subsequence and let $(\hat{\bm x},\hat{\bm z}):=\lim_{k\rightarrow \infty, k\in \mathcal{K}} ({\bm x}^k,{\bm z}^k)$. Since $\{({\bm x}^k,{\bm z}^k)\}\subset\mathcal{A}_\epsilon$ and $\{{\bm x}^k\}\subset{\cal C}$, from the definition of $\mathcal{A}_\epsilon$ in \eqref{def-A} and the closedness of $\mathcal{C}$, we see that
$\hat{\bm x}\in \mathcal{C}$,
$g_i(\hat{\bm x})\leq \epsilon$ and $\hat{z}_i\geq 0$ for all $i$, and $\hat{z}_j=0$ whenever $g_j(\hat{\bm x})<-\epsilon$.
Set $\eta_i :=  \max\{g_i(\hat{\bm x}),-\epsilon\}$ for each $i$. Then ${\bm \eta}\in [-\epsilon,\epsilon]^m$ and we have 
\begin{equation}\label{display_for_short}
\hat{\bm z}^T ({\bm g}(\hat {\bm x})-{\bm \eta})= 0\ {\rm and}\ {\bm g}(\hat {\bm x})-{\bm \eta}\leq {\bm 0}.
\end{equation}
Moreover, from \eqref{prop-suff-con-2}, we have ${\bm 0}=\nabla f(\hat{\bm x})+\sum_{i=1}^m\hat{z}_i \nabla g_i(\hat{\bm x})$. This together with \eqref{display_for_short} shows that $\hat{\bm x}\in \Argmin_{{\bm x}\in \R^n}\{f({\bm x}):\; {\bm g}({\bm x}) \le {\bm \eta}\}$,
contradicting \eqref{prop-suff-con-1} (as $\hat{\bm x}\in {\cal C}$).
\end{proof}

Let $\hat{\bm v}\in \Argmin_{{\bm v}\in \mathcal{D}} \langle {\bm u}, {\bm v}\rangle$ for some ${\bm u}\in \R^n$.
If $\mathcal{D}$ is $(\nu,q)$-uniformly convex, then it holds that (see \cite[Lemma 2.1]{Kerdreux-dAspremont-Pokutta-2021})
\begin{align}\label{lb}
	\langle - {\bm u}, \hat{\bm v}-{\bm v}  \rangle \geq (\nu/2)  \|\hat{\bm v}- {\bm v}\|^q \|{\bm u}\|\quad \forall\, {\bm v}\in \mathcal{D}.
\end{align}
Then, we deduce from \eqref{lb} and \eqref{def-G} (with $h={\rm Ind}_{\cal D}$) that for all $({\bm x},{\bm z})\in {\cal D}\times \R^m_+$ and $\lambda > 0$,
\begin{align}\label{lb-1}
G_{{\bm z}, \lambda}(\bm x) \geq  (\nu/2) \|{\bm v}^+- {\bm x}\|^q\|\nabla_{\bm x} F_\lambda({\bm x},{\bm z})\|
\end{align}
whenever ${\bm v}^+\in \Argmin_{{\bm v}\in {\cal D}}   \langle \nabla_{\bm x} F_{\lambda}({\bm x}, {\bm z}),{\bm v}\rangle$.

\begin{lemma}[Lower bound on $\|\nabla_{\bm x}F_{\lambda}\|$]\label{lemma-F-bound}
Suppose that Assumptions~\ref{assu-function} and \ref{assu-uc-function} hold, and either Assumption~\ref{assu-parameter} or Assumption~\ref{assu-parameter-ss} holds.
	Let $\{({\bm x}^k,{\bm z}^k)\}$ be the sequence generated by Algorithm \ref{alg}.
	Then, there exists a positive $k_0$ such that
    $
    \inf_{k\geq k_0}\|\nabla_{\bm x} F_{\lambda_k}({\bm x}^k,{\bm z}^k)\|\geq \zeta
    $,
    where $\zeta>0$ is given in Assumption \ref{assu-uc-function}(ii).
	\end{lemma}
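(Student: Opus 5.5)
The plan is to show that, for all large $k$, the pair $({\bm x}^k,\hat{\bm z}^k)$ with $\hat{\bm z}^k:=[\lambda_k{\bm g}({\bm x}^k)+{\bm z}^k]_+$ belongs to $\mathcal{A}_\epsilon$, and that ${\bm x}^k\in\mathcal{C}$. Since
\[\textstyle
\nabla_{\bm x}F_{\lambda_k}({\bm x}^k,{\bm z}^k)=\nabla f({\bm x}^k)+\sum_{i=1}^m \hat z_i^k\nabla g_i({\bm x}^k),
\]
the definition of $\zeta$ in Assumption~\ref{assu-uc-function}(ii) then gives $\|\nabla_{\bm x}F_{\lambda_k}({\bm x}^k,{\bm z}^k)\|\ge\zeta$ directly. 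Membership ${\bm x}^k\in\mathcal{C}={\rm dom}\,h$ holds for every $k$, by Remark~\ref{remark-bound}: ${\bm x}^0\in{\rm dom}\,h$ and each ${\bm x}^{k+1}$ is a convex combination of ${\bm x}^k$ and ${\bm v}^k\in\mathcal{C}$. Clearly $\hat{\bm z}^k\ge{\bm 0}$.

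Next, I need $g_i({\bm x}^k)\le\epsilon$ for all $i$ once $k$ is large. Under either Assumption~\ref{assu-parameter} or Assumption~\ref{assu-parameter-ss}, Proposition~\ref{prop-rate-T} or Proposition~\ref{prop-rate-T-ss} gives $T_k\to0$. Hence \eqref{general_conditions} holds, and Theorem~\ref{the-con-0} yields $\|[{\bm g}({\bm x}^k)]_+\|\to0$. So there is $k_1$ with $\|[{\bm g}({\bm x}^k)]_+\|\le\epsilon$ for $k\ge k_1$, which gives ${\bm x}^k\in{\bm g}^{-1}((-\infty,\epsilon]^m)$.

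Finally, consider the complementarity-type condition: if $g_i({\bm x}^k)<-\epsilon$, we need $\hat z_i^k=0$. Here I use the boundedness of $\{{\bm z}^k\}$ from \eqref{remark-bound-zk}, say $\|{\bm z}^k\|\le M_z$, together with $\lambda_k\to\infty$. Choose $k_2$ so that $\lambda_k\epsilon> M_z$ for all $k\ge k_2$. Then whenever $g_i({\bm x}^k)<-\epsilon$,
\[
\lambda_k g_i({\bm x}^k)+z_i^k<-\lambda_k\epsilon+M_z<0,
\]
so $\hat z_i^k=0$. Taking $k_0:=\max\{k_1,k_2\}$, we get $({\bm x}^k,\hat{\bm z}^k)\in\mathcal{A}_\epsilon$ for all $k\ge k_0$, which concludes the argument.

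The only nontrivial step is the feasibility estimate $\|[{\bm g}({\bm x}^k)]_+\|\to0$. It depends on the earlier propositions establishing $T_k\to0$, and I would verify that those propositions need only Assumption~\ref{assu-function} (which $h={\rm Ind}_{\mathcal{C}}$ with $\mathcal{C}$ compact satisfies) together with the respective parameter assumptions. Everything else is elementary.
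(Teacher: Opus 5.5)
Your proposal is correct and follows essentially the paper's argument. For large $k$ you place $({\bm x}^k,[\lambda_k{\bm g}({\bm x}^k)+{\bm z}^k]_+)$ in $\mathcal{A}_\epsilon$, with ${\bm x}^k\in\mathcal{C}$, using $\|[{\bm g}({\bm x}^k)]_+\|\to0$, boundedness of $\{{\bm z}^k\}$ and $\lambda_k\to\infty$, and then read off the bound from the definition of $\zeta$. The only difference is cosmetic: the paper gets vanishing feasibility violation directly from Theorems~\ref{the-con} and~\ref{the-con-ss}, whereas you derive it from $T_k\to0$ (Propositions~\ref{prop-rate-T} and~\ref{prop-rate-T-ss}) via Theorem~\ref{the-con-0}, which rests on the same fact.
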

	\begin{proof}
    From \eqref{lagrangian-F}, we see that
    		\[\textstyle
		\nabla_{\bm x} F_\lambda({\bm x},{\bm z})= \nabla f(\bm x) +\sum_{i=1}^m\lambda[ g_i(\bm x)+{\bm z}/\lambda]_+\nabla g_i(\bm x). 
		\]
          From Theorem \ref{the-con} (if Assumption~\ref{assu-parameter} holds) and Theorem \ref{the-con-ss} (if Assumption~\ref{assu-parameter-ss} holds), the boundedness of $\{{\bm z}^k\}$ (see \eqref{remark-bound-zk}) and $\lambda_k\rightarrow \infty$, we see that 
        \begin{align*}
         [g_i({\bm x}^k)]_+ \rightarrow 0, \quad z^k_i/\lambda_k\rightarrow 0  \quad \mbox{for each $i\in [m]$}.
        \end{align*}
        Then, for the $\epsilon$ given in Assumption~\ref{assu-uc-function}(ii), there exists a positive $k_0$ such that for all $k\geq k_0$ and for each $i\in[m]$, we have $|z_i^k/\lambda_k|\le 0.5\epsilon$ and $[g_i({\bm x}^k)]_+ \le \epsilon$. Consequently, all $k\geq k_0$ and for each $i\in[m]$, it holds that
        \begin{itemize}
      \item if $g_i({\bm x}^k)\geq -\epsilon$, then it holds that $g_i({\bm x}^k)\leq \epsilon$ and $[g_i({\bm x}^k)+z^k_i/\lambda_k]_+\geq 0$;
    \item if $g_i({\bm x}^k)< -\epsilon$, then $g_i({\bm x}^k)+z^k_i/\lambda_k\leq -0.5 \epsilon$ and hence $[g_i({\bm x}^k)+z^k_i/\lambda_k]_+= 0$.
        \end{itemize}
         This shows that $({\bm x}^k,\lambda_k [{\bm g}({\bm x}^k)+{\bm z}^k/\lambda_k]_+)\in \mathcal{A}_\epsilon$ for all $k\geq k_0$, which implies the desired assertion in view of Assumption \ref{assu-uc-function}(ii) and the fact that ${\bm x}^k\in \mathcal{C}$ (see \eqref{alg-1}).
\end{proof}

Recall that the convergence rate in Theorem~\ref{lemma-fea-fun-bound} is largely governed by the convergence rates of $\{T_k\}$ and $\{1/\lambda_k\}$. We will establish improved convergence rates for $\{T_k\}$, under the two different stepsize rules studied in Section~\ref{sec4}.

\begin{proposition}[Improved rates on uniformly convex sets for $\{T_k\}$ using open-loop stepsize]\label{prop-uc-dmin}
Suppose that Assumptions~\ref{assu-function}, \ref{assu-parameter} and \ref{assu-uc-function} hold. 
Let $\{({\bm x}^k,{\bm z}^k)\}$ be generated by Algorithm \ref{alg} and $T_k$ be given in \eqref{def-Tk}.
Then it holds that
\begin{equation}\label{prop-uc-dmin-1}
T_k=
\begin{cases}
\mathcal{O}\big(\frac{1}{k^{1 -p + \tau}}\big) & \mbox{if $\mu = 0$},\\
\mathcal{O}\big(\max\big\{\frac{1}{k^{(1-\mu)\omega+p-\tau}}, \frac{1}{k^{1-p+\tau}}\big\}\big)  & \mbox{if $\mu \in (0,1)$},
\end{cases}
\end{equation}
where $\omega:=\min\{p-\tau,1-p+\tau\}$ with $0<\tau<p<1$ and $\mu:=1-2/q$.
	\end{proposition}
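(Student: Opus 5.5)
The plan is to redo the one-step analysis of Proposition~\ref{prop-rate-T}, but to bound the curvature term $\frac{L_\Psi({\bm x}^k,{\bm z}^k)+L_f}{2}\alpha_k^2\|{\bm v}^k-{\bm x}^k\|^2$ by exploiting uniform convexity instead of using the crude bound $\|{\bm v}^k-{\bm x}^k\|\le D$. I will then feed the resulting recursion into Proposition~\ref{prop_phi1}. I will argue that the rate obtained is at least as good as \eqref{prop-uc-dmin-1}.

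\textbf{Step 1 (curvature term via uniform convexity).} By Lemma~\ref{lemma-F-bound}, for $k\ge k_0$ we have $\|\nabla_{\bm x}F_{\lambda_k}({\bm x}^k,{\bm z}^k)\|\ge\zeta$. Since ${\bm x}^k\in\mathcal{C}$, \eqref{lb-1} then gives
\[
\|{\bm v}^k-{\bm x}^k\|^q\le \frac{2}{\nu\zeta}\,G_{{\bm z}^k,\lambda_k}({\bm x}^k).
\]
Hence $\|{\bm v}^k-{\bm x}^k\|^2\le c\,G_k^{2/q}=c\,G_k^{1-\mu}$, where $G_k:=G_{{\bm z}^k,\lambda_k}({\bm x}^k)$. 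Combining this with \eqref{Lip-Lpsi}, the curvature term is at most $c'\lambda_k\alpha_k^2G_k^{1-\mu}$.

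\textbf{Step 2 (absorb into the descent term).} I split $-\alpha_kG_k+c'\lambda_k\alpha_k^2G_k^{1-\mu}$ according to which part dominates.

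If $\mu=0$, the curvature term is $c'(\lambda_k\alpha_k)\alpha_kG_k$. Since $\lambda_k\alpha_k=\Theta(k^{\tau-p})\to0$, it is at most $\tfrac12\alpha_kG_k$ for large $k$.

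If $\mu\in(0,1)$, I apply Young's inequality with exponents $1/(1-\mu)$ and $1/\mu$ to $\alpha_k\cdot\big(c'\lambda_k\alpha_kG_k^{1-\mu}\big)$. This gives
\[
c'\lambda_k\alpha_k^2G_k^{1-\mu}\le \tfrac12\alpha_kG_k+C\alpha_k(\lambda_k\alpha_k)^{1/\mu}.
\]
Equivalently, I can argue by cases: either $G_k^\mu\ge 2c'\lambda_k\alpha_k$, or $G_k\le(2c'\lambda_k\alpha_k)^{1/\mu}$.

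In both cases $-\tfrac12\alpha_kG_k\le-\tfrac12\alpha_kT_k$ by Lemma~\ref{lemma-G-T}. The dual/penalty change term from \eqref{prop-rate-T-3-1} is handled exactly as in the proof of Proposition~\ref{prop-rate-T}: its $T_k/k$ part is absorbed into the contraction, and the remaining parts are $\mathcal{O}(k^{-(1+\tau)}+k^{-(1+2p-\tau)})$. This yields, for large $k$,
\[
T_{k+1}\le(1-\tfrac14\alpha_k)T_k+\mathcal{O}\big(k^{-(1+\tau)}\big)+
\begin{cases}0&\mu=0,\\ \mathcal{O}\big(k^{-p-(p-\tau)/\mu}\big)&\mu\in(0,1).\end{cases}
\]

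\textbf{Step 3 (conclude).} Apply Proposition~\ref{prop_phi1} with $t_1=p$; each source exponent exceeds $p$.

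For $\mu=0$ this gives $T_k=\mathcal{O}(k^{-(1-p+\tau)})$, which is the first line of \eqref{prop-uc-dmin-1}.

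For $\mu\in(0,1)$ it gives $T_k=\mathcal{O}(\max\{k^{-(p-\tau)/\mu},k^{-(1-p+\tau)}\})$. This implies the second line, because $(p-\tau)/\mu\ge(1-\mu)\omega+p-\tau$. Indeed, $\omega\le p-\tau$, so it suffices that $1/\mu\ge 2-\mu$, i.e.\ $(1-\mu)^2\ge0$.

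If the Young route ran into trouble, my fallback would be a bootstrap argument that reproduces the stated exponent directly. Start from $T_k=\mathcal{O}(k^{-\omega})$ from \eqref{prop-rate-T-2}, and use it to control the factor $G_k^{1-\mu}$ via the summed inequality \eqref{prop-rate-T-6}. The resulting error term is of order $\alpha_k\lambda_k\alpha_k k^{-(1-\mu)\omega}$, whose exponent is $p+(p-\tau)+(1-\mu)\omega$. This is exactly what the ``$(1-\mu)\omega$'' in the statement suggests.

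\textbf{Main obstacle.} The delicate point is Step 2. There $G_k$ is not a priori small (only $T_k$ is), so the bound must not require an upper bound on $G_k$. The case split/Young argument avoids this, and the remaining work is careful exponent bookkeeping to check that all error terms decay faster than $k^{-p}$.
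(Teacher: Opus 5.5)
Your proof is correct, and it departs from the paper's at exactly the step that produces the exponent $(1-\mu)\omega$.

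\textbf{The paper's route.} The paper first spends half of $\alpha_kG_k$, with $G_k:=G_{{\bm z}^k,\lambda_k}({\bm x}^k)$, on the dual-update error. It then splits at $G_k^\mu=\varrho\lambda_k\alpha_k$. That is precisely where the remaining descent $\tfrac12\alpha_kG_k$ and the curvature term $\tfrac{\varrho}{2}\lambda_k\alpha_k^2G_k^{1-\mu}$ balance. So in the ``large $G_k$'' case the net factor $1-\varrho\lambda_k\alpha_kG_k^{-\mu}$ can be arbitrarily close to $0$. To still obtain a contraction, the paper uses Lemma~\ref{lemma-G-T} to bound
$-\tfrac12\alpha_k(1-\varrho\lambda_k\alpha_kG_k^{-\mu})G_k\le-\tfrac12\alpha_kT_k+\tfrac{\varrho}{2}\lambda_k\alpha_k^2T_k^{1-\mu}$.
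It then inserts the a priori rate $T_k=\mathcal{O}(k^{-\omega})$ from Proposition~\ref{prop-rate-T}. The ``small $G_k$'' case gives the better exponent $p+(p-\tau)/\mu$, and the paper keeps the worse of the two.

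\textbf{What your route buys.} Your Young inequality (equivalently, a split that always leaves a fixed fraction $\tfrac12\alpha_kG_k$ of the descent) needs no bootstrap. You obtain $T_k=\mathcal{O}(\max\{k^{-(p-\tau)/\mu},k^{-(1-p+\tau)}\})$. This implies \eqref{prop-uc-dmin-1} by your $(1-\mu)^2\ge0$ check. It also has a strictly larger first exponent when $\mu\in(0,1)$, and it recovers the $\mu=0$ rate in the limit $\mu\downarrow0$.

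\textbf{Your fallback.} It is unnecessary, and as phrased it would not work. The summed inequality \eqref{prop-rate-T-6} gives no pointwise control of $G_k$, and Lemma~\ref{lemma-G-T} bounds $T_k$ by $G_k$, not conversely. The paper's bootstrap instead controls $T_kG_k^{-\mu}\le T_k^{1-\mu}$.
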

\begin{proof}
From Assumption \ref{assu-parameter} and Lemma \ref{lemma-F-bound}, there exist $\hat c_1>0$ and positive integer $k_0$ such that 
\begin{align}\label{prop-uc-dmin-2}
\begin{split}
\|\nabla_{\bm x} F_{\lambda_k}({\bm x}^k,{\bm z}^k)\|\geq \zeta\ \ {\rm and}\ \
(\sigma_k +\lambda_{k+1}-\lambda_k)/\lambda_k\leq \hat c_1/(k+1),
\end{split}
\end{align}
for all $k\geq k_0$.
It follows that for all $k\geq k_0$
\begin{align}
		&\cL_{\lambda_{k+1}}({\bm x}^{k+1},{\bm z}^{k+1})-L^* -\bigg(\frac{1}{\lambda_k}+\alpha^2_k\lambda_k+T_k\bigg)\frac{c_2}{k+1} \notag \\
        \overset{\rm (a)}{\leq}& \cL_{\lambda_k}({\bm x}^k,{\bm z}^k)-L^* - \alpha_k G_{{\bm z}^k,\lambda_k}({\bm x}^k) + \frac{L_{\Psi}({\bm x}^k,{\bm z}^k)+L_f}{2}\alpha_k^2 \|{\bm v}^k-{\bm x}^k\|^2 \notag\\
      \overset{\rm (b)}{\leq} & T_k - \alpha_k  G_{{\bm z}^k,\lambda_k}({\bm x}^k) + \frac{\ell \lambda_k}{2}\alpha_k^2 (\|{\bm v}^k-{\bm x}^k\|^{q})^{\frac{2}{q}}\notag \\
      \overset{\rm (c)}{\leq} & T_k - \alpha_k G_{{\bm z}^k,\lambda_k}({\bm x}^k) + \frac{\ell \lambda_k}{2}\alpha_k^2 \bigg(\frac{2}{\nu}\|\nabla_{\bm x}F_{\lambda_k}({\bm x}^k,{\bm z}^k)\|^{-1}G_{{\bm z}^k,\lambda_k}({\bm x}^k)\bigg)^{\frac{2}{q}}\notag \\
     \overset{\rm (d)}{\leq} & T_k - \alpha_k\big( G_{{\bm z}^k,\lambda_k}({\bm x}^k)- (\varrho/2) \lambda_k\alpha_k  G_{{\bm z}^k,\lambda_k}({\bm x}^k)^{\frac{2}{q}}\big), \label{prop-uc-dmin-3-0}
	\end{align}
    where (a) holds for some $c_2\geq \hat c_1$ by \eqref{prop-rate-T-1} and the second inequality in \eqref{prop-uc-dmin-2},
    (b) holds by invoking the definition of $T_k$ in \eqref{def-Tk} and the fact that $L_{\Psi}({\bm x}^k,{\bm z}^k)+L_f\leq \ell \lambda_k$ for some $\ell>0$ (see \eqref{Lip-Lpsi}),
    (c) follows from \eqref{lb-1},
    (d) follows from the first inequality in \eqref{prop-uc-dmin-2} upon defining $\varrho:=\ell(2/(\nu\zeta))^{\frac{2}{q}}$.
    Since $\alpha^k=\Theta(k^{-p})$ with $p\in (0,1)$, there exists $k_1\geq k_0$ such that $1/2-c_2/[\alpha_k (k+1)]\geq 0$ for any $k\geq k_1$.
 It follows that for any $k\geq k_1$
   \begin{align}
		&\cL_{\lambda_{k+1}}({\bm x}^{k+1},{\bm z}^{k+1})-L^*  \notag \\
    \overset{\rm (a)}{\leq} & T_k - \alpha_k\big( G_{{\bm z}^k,\lambda_k}({\bm x}^k)- (\varrho/2) \lambda_k\alpha_k  G_{{\bm z}^k,\lambda_k}({\bm x}^k)^{\frac{2}{q}}\big)
      +  \bigg(\frac{1}{\lambda_k}+ \alpha^2_k\lambda_k+T_k\bigg) \frac{c_2}{k+1} \notag \\
         \overset{\rm (b)}{\leq} & T_k - \alpha_k\bigg( G_{{\bm z}^k,\lambda_k}({\bm x}^k)- \frac{\varrho \lambda_k\alpha_k }{2}  G_{{\bm z}^k,\lambda_k}({\bm x}^k)^{\frac{2}{q}} - \frac{c_2 G_{{\bm z}^k,\lambda_k}({\bm x}^k)}{\alpha_k (k+1) }\bigg)
      + \frac{c_3}{(k+1)^{1+\tau}} \notag \\
       \overset{\rm (c)}{\leq} & T_k - \alpha_k\bigg( \frac{1}{2}G_{{\bm z}^k,\lambda_k}({\bm x}^k)- \frac{\varrho \lambda_k\alpha_k }{2}  G_{{\bm z}^k,\lambda_k}({\bm x}^k)^{\frac{2}{q}}  \bigg)
      + \frac{c_3}{(k+1)^{1+\tau}},  \label{prop-uc-dmin-3}
	\end{align}
    where (a) follows from \eqref{prop-uc-dmin-3-0}, (b) holds for some $c_3 \geq c_2$ by Assumption~\ref{assu-parameter} (note $1+2p-\tau>1+p>1+\tau$ so $(\lambda_k^{-1} + \alpha_k^2\lambda_k)/(k+1)=\mathcal{O}(1/k^{1+\tau})$ ) and Lemma~\ref{lemma-G-T}, and (c) follows from the facts that $1/2-c_2/[\alpha_k (k+1)]\geq 0$ for any $k\geq k_1$.
    
   We consider the following two cases for $q$.
   \begin{enumerate}[\rm (i)]
   \item $q=2$. In this case,  $\mu =0$. By choosing a larger $k_1$ if necessary, we assume also that $(\varrho/2)\lambda_k \alpha_k\!\le\! 0.25$ for all $k\ge k_1$ (thanks to $p>\tau$). Using this, \eqref{prop-uc-dmin-3} and Lemma~\ref{lemma-G-T}, it holds that
      \begin{align*}\textstyle
     T_{k+1}\leq T_k \left(1-0.25\alpha_k\right)+ {c_3}/{(k+1)^{1+\tau}}\quad  \forall\, k\geq k_1.
     \end{align*}
     Since $1+\tau>p$, invoking Proposition \ref{prop_phi1} and Assumption~\ref{assu-parameter}(ii), we deduce \eqref{prop-uc-dmin-1} for the case $\mu=0$.
  
   \item $q>2$. In this case, we have $\mu =1 - 2/q\in (0,1)$. For any $k\ge k_1$, we proceed by considering two cases. 
   \begin{itemize}
   \item If $G_{{\bm z}^k,\lambda_k}({\bm x}^k) > \varrho^{1/\mu}(\lambda_k \alpha_k)^{1/\mu}$, then in particular $G_{{\bm z}^k,\lambda_k}({\bm x}^k) > 0$ and we have
    $1- \varrho \lambda_k\alpha_k  G_{{\bm z}^k,\lambda_k}({\bm x}^k)^{-\mu}\in (0,1)$. Hence,
    \begin{align}
    &\cL_{\lambda_{k+1}}({\bm x}^{k+1},{\bm z}^{k+1})-L^*\notag\\
    \overset{\rm (a)}\le& T_k -0.5 \alpha_k\big( 1- \varrho \lambda_k\alpha_k  G_{{\bm z}^k,\lambda_k}({\bm x}^k)^{\frac{2}{q}-1}\big)G_{{\bm z}^k,\lambda_k}({\bm x}^k)+ \frac{c_3}{(k+1)^{1+\tau}} \notag \\
    \overset{\rm (b)}{\leq}& T_k \bigg(1-0.5\alpha_k\bigg( 1- \frac{\varrho\lambda_k\alpha_k}{(G_{{\bm z}^k,\lambda_k}({\bm x}^k))^{\mu}}\bigg)\bigg)+ \frac{c_3}{(k+1)^{1+\tau}} \notag \\
    \overset{\rm (c)}{\leq}& T_k(1-0.5\alpha_k) +(\varrho/2)\lambda_k \alpha_k^2 T_k^{1-\mu}+\frac{c_3}{(k+1)^{1+\tau}}
    \notag \\
    \overset{\rm (d)}{\leq}& T_k(1-0.5 \alpha_k) \!+\!\frac{\varrho}{2} \frac{c_4}{(k+1)^{(1-\mu)\omega+(2p-\tau)}} \!+\! \frac{c_3}{(k+1)^{1+\tau}},
    \label{prop-uc-dmin-4} 
    \end{align}
where (a) follows from \eqref{prop-uc-dmin-3}, (b) follows from Lemma~\ref{lemma-G-T} and the fact that $\alpha_k(1- \varrho \lambda_k\alpha_k  G_{{\bm z}^k,\lambda_k}({\bm x}^k)^{-\mu})\in (0,1)$,
(c) follows from Lemma \ref{lemma-G-T},
and (d)  holds for some positive constant $c_4$ in view of \eqref{prop-rate-T-2} (note that $\omega=\min\{p-\tau,1-p+\tau\}$), the facts that $\lambda_k=\Theta(k^\tau)$ and $\alpha_k=\Theta(k^{-p})$.

\item If  $G_{{\bm z}^k,\lambda_k}({\bm x}^k)\leq \varrho^{1/\mu}(\lambda_k \alpha_k)^{1/\mu}$, then we have from Lemma \ref{lemma-G-T} that
\begin{align}\label{prop-uc-dmin-5}
T_k\leq G_{{\bm z}^k,\lambda_k}({\bm x}^k)\leq \varrho^{1/\mu}(\lambda_k \alpha_k)^{1/\mu}=\mathcal{O}(1/k^{(p-\tau)/\mu}),
\end{align}
where the last relation follows from Assumption~\ref{assu-parameter}(i) and (ii). Hence,
\begin{align}
&\cL_{\lambda_{k+1}}({\bm x}^{k+1},{\bm z}^{k+1})-L^*\notag\\
\overset{\rm (a)}\le& T_k \!-\! 0.5 \alpha_kG_{{\bm z}^k,\lambda_k}({\bm x}^k) + (\varrho/2) \lambda_k\alpha_k^2  G_{{\bm z}^k,\lambda_k}({\bm x}^k)^{\frac{2}{q}}+ \frac{c_3}{(k+1)^{1+\tau}}  \notag\\
\overset{\rm (b)}{\leq}& T_k(1-0.5\alpha_k)+ (\varrho/2) \lambda_k\alpha_k^2  G_{{\bm z}^k,\lambda_k}({\bm x}^k)^{1-\mu} +\frac{c_3}{(k+1)^{1+\tau}}\notag \\
\overset{\rm (c)}{\leq}& T_k(1-0.5\alpha_k)+ \frac{c_5}{(k+1)^{\frac{p-\tau}{\mu}+p}}+\frac{c_3}{(k+1)^{1+\tau}}, \label{prop-uc-dmin-6}
\end{align}
where (a) follows from \eqref{prop-uc-dmin-3}, 
(b) follows from Lemma \ref{lemma-G-T},
and (c) holds for some $c_5>0$ by  \eqref{prop-uc-dmin-5} and the facts that $\lambda_k=\Theta(k^\tau)$ and $\alpha_k=\Theta(k^{-p})$.
\end{itemize}

Noting that $1/\mu > 2-\mu$ for $\mu\in(0,1)$ and $\omega=\min\{p-\tau,1-p+\tau\}$, we have 
\[
(1-\mu)\omega+(2p-\tau) \leq (2-\mu)(p-\tau)+p < (p-\tau)/\mu +p.
\]
This fact together with \eqref{prop-uc-dmin-4} and \eqref{prop-uc-dmin-6} implies that there exists $c_6\!\geq\! \max\{c_4,c_5\}$  and $k_2\geq k_1$ such that 
\begin{align*}
T_{k+1}\leq T_k(1-0.5\alpha_k)+ \frac{c_6}{(k+1)^{(1-\mu)\omega+2p-\tau}}  + \frac{c_4}{(k+1)^{1+\tau}} \quad \forall\, k\geq k_2.
\end{align*}
The second bound in \eqref{prop-uc-dmin-1} now follows from Proposition \ref{prop_phi1}.
\end{enumerate}
\end{proof}

\begin{remark}[Improved rate to arbitrarily close to $\mathcal{O}(1/k)$ for open-loop stepsize] \label{remark-uc-dmin-rate}
Clearly, for any $\mu\in[0,1)$, the rates for $T_k$ in \eqref{prop-uc-dmin-1} are never worse than those in Proposition~\ref{prop-rate-T}, and as $\mu$ approaches $1$ (i.e., the set becoming less uniformly convex), we see that $\max\{\frac{1}{k^{(1-\mu)\omega+(p-\tau)}},\frac{1}{k^{\tau}}\}$ becomes $\max\{\frac{1}{k^{p-\tau}},\frac{1}{k^{\tau}}\}$, matching the rate in Proposition~\ref{prop-rate-T}.

Moreover, using Theorem~\ref{lemma-fea-fun-bound}, \eqref{prop-uc-dmin-1} and the fact that $1-p+\tau>\tau$, we deduce that
\begin{align*}
\max\{|f({\bm x}^k)  +  h({\bm x}^k) - L^*|, \|[{\bm g}({\bm x}^k)]_+\|\} 
=
\begin{cases}
 \mathcal{O}(k^{-\tau}) & \mbox{if $\mu=0$}, \\
 \mathcal{O}(\max\{\frac{1}{k^{(1-\mu)\omega+(p-\tau)}},\frac{1}{k^{\tau}}\}) & \mbox{if $\mu\in(0,1)$}. 
 \end{cases}
\end{align*}
Hence, we obtain better rates compared with the rates in Theorem~\ref{the-con}.
 In particular, for $\mu=0$ (i.e. $h={\rm Ind}_\mathcal{C}$ with strongly convex $\mathcal{C}$), the above rate can be made arbitrarily close to $\mathcal{O}(1/k)$ by choosing $p>\tau$ with $\tau \in (0,1)$ being arbitrarily close to $1$.
\end{remark}


\begin{proposition}[Improved rates on uniformly convex sets for $\{T_k\}$ using short stepsize]\label{prop-uc-bound}
Suppose that Assumptions~\ref{assu-function}, \ref{assu-parameter-ss} and \ref{assu-uc-function} hold.
	Let $\{({\bm x}^k,{\bm z}^k)\}$ be generated by Algorithm \ref{alg} and $T_k$ be given in \eqref{def-Tk}.
	Let $\mu:=1-2/q\in[0,1)$. Then it holds 
\begin{equation}\label{prop-uc-bound-1}
T_k =\mathcal{O}(\max\{1/{k^\frac{1}{1+\mu}},1/k^{\frac{2-2\tau}{1+\mu}}\}).
\end{equation}
	\end{proposition}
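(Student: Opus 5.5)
Starting from inequality \eqref{prop-rate-T-ss-0} of Proposition~\ref{prop-rate-T-ss}, I would derive a sharper recursion for $T_k$ of the form \eqref{phi_relation1} with $\mu=1-2/q$, and then apply Proposition~\ref{prop_phi2} after a rescaling by $\lambda_k$. For the short stepsize, \eqref{prop-rate-T-ss-0} gives, for $k\ge k_0$,
\[
\cL_{\lambda_{k+1}}({\bm x}^{k+1},{\bm z}^{k+1})-L^*\le T_k-\tfrac14 G_k\min\Big\{1,\tfrac{G_k}{(L_\Psi+L_f)\|{\bm v}^k-{\bm x}^k\|^2}\Big\}+\tfrac{c_2}{(k+1)^{1+\tau}}+\tfrac{c_3}{(k+1)^{2-\tau}},
\]
where $G_k:=G_{{\bm z}^k,\lambda_k}({\bm x}^k)$. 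Instead of bounding $\|{\bm v}^k-{\bm x}^k\|^2\le D^2$ as before, I use \eqref{lb-1} together with Lemma~\ref{lemma-F-bound}. For large $k$ this gives $\|{\bm v}^k-{\bm x}^k\|^q\le 2G_k/(\nu\zeta)$, so $\|{\bm v}^k-{\bm x}^k\|^2\le (2G_k/(\nu\zeta))^{2/q}$. Combining this with $L_\Psi+L_f\le \ell\lambda_k$ from \eqref{Lip-Lpsi}, the ratio inside the min is at least $G_k^{1-2/q}/(\varrho\lambda_k)=G_k^{\mu}/(\varrho\lambda_k)$, with $\varrho=\ell(2/(\nu\zeta))^{2/q}$.

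Next, Lemma~\ref{lemma-G-T} gives $G_k\ge T_k$. Since $x\mapsto x\min\{1,x^\mu/(\varrho\lambda_k)\}$ is nondecreasing, I obtain
\[
T_{k+1}\le T_k\max\Big\{\tfrac34,\,1-\tfrac{T_k^{\mu}}{4\varrho\lambda_k}\Big\}+\tfrac{c_2}{(k+1)^{1+\tau}}+\tfrac{c_3}{(k+1)^{2-\tau}}.
\]
The case $G_k=0$ is trivial because then $T_k=0$. I then set $\varphi_k:=T_k/(4\varrho\lambda_k)^{1/\mu}$ when $\mu>0$. Because $\lambda_k$ is increasing, dividing the recursion gives $\varphi_{k+1}\le\varphi_k\max\{3/4,1-\varphi_k^\mu\}+\gamma_k$, where
\[
\gamma_k:=c\big((k+1)^{-(1+\tau)-\tau/\mu}+(k+1)^{-(2-\tau)-\tau/\mu}\big).
\]

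Proposition~\ref{prop_phi2} with $\eta=3/4$ then yields $\varphi_k=\mathcal{O}(\gamma_k^{1/(1+\mu)})$. Undoing the scaling multiplies by $\lambda_k^{1/\mu}=\Theta(k^{\tau/\mu})$. The $\tau/\mu$ contributions then combine as $\tau/\mu-\tau/(\mu(1+\mu))=\tau/(1+\mu)$. This leaves $T_k=\mathcal{O}(k^{-(1+\tau)/(1+\mu)+\tau/(1+\mu)})=\mathcal{O}(k^{-1/(1+\mu)})$ and $\mathcal{O}(k^{-(2-2\tau)/(1+\mu)})$, which is \eqref{prop-uc-bound-1}. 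The case $\mu=0$ ($q=2$) is handled separately, since the recursion is then contractive: $T_{k+1}\le T_k\max\{3/4,1-1/(4\varrho\lambda_k)\}+\ldots$. I would treat it either by Proposition~\ref{prop_phi1} with $\tau_k=1/(4\varrho\lambda_k)$, which gives $\mathcal{O}(\max\{k^{-1},k^{-(2-2\tau)}\})$, or by the same scaling argument with $\mu$ replaced by a limiting exponent. I expect the Proposition~\ref{prop_phi1} route to be cleanest, since $t_1=\tau<1$ and $t_2\in\{1+\tau,2-\tau\}>\tau$.

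The main obstacle is verifying condition \eqref{condition_for_gamma} for this $\gamma_k$, since Remark~\ref{comment_condition_gen} requires the decay exponent $s$ to satisfy $s\le 1+1/\mu$. Here $s=\min\{1+\tau,2-\tau\}+\tau/\mu$ (taking the slower term, as a sum of two powers behaves like its dominant one for this condition). We have $1+\tau+\tau/\mu\le 1+1/\mu$ iff $\tau(1+\mu)\le 1$, which may fail. If it fails, I would first weaken $\gamma_k$ to the larger sequence $c(k+1)^{-(1+1/\mu)}$ when necessary, or check the difference condition directly, and confirm that the final rate is unaffected. A second point needing care is that Lemma~\ref{lemma-F-bound} only holds for $k$ beyond some index, so the recursion starts at $\max\{k_0,\cdot\}$; this is handled by shifting indices.
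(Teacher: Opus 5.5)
Your proposal is correct and follows the paper's proof essentially step for step. It derives the same recursion $T_{k+1}\le T_k\max\{3/4,1-cT_k^{\mu}/\lambda_k\}+c_2(k+1)^{-(1+\tau)}+c_3(k+1)^{-(2-\tau)}$ from \eqref{prop-rate-T-ss-0}, \eqref{lb-1}, Lemma~\ref{lemma-F-bound} and Lemma~\ref{lemma-G-T}, then applies Proposition~\ref{prop_phi1} when $q=2$, and rescales by $\lambda_k^{1/\mu}$ and applies Proposition~\ref{prop_phi2} when $q>2$.

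The obstacle you flag never actually arises, because you tested the wrong exponent. For $\tau\le 1/2$ the dominant exponent is $s=1+\tau+\tau/\mu$, and $\tau(1+\mu)<1$ holds automatically since $\mu<1$. For $\tau>1/2$ (the only regime where $\tau(1+\mu)>1$ is possible) the dominant exponent is instead $s=2-\tau+\tau/\mu$, and $s-(1+1/\mu)=(1-\tau)(1-1/\mu)<0$. So Remark~\ref{comment_condition_gen} always applies, which is exactly the two-case check the paper performs, and no weakening of $\gamma_k$ is needed.
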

\begin{proof}
In view of Lemma \ref{lemma-F-bound}, \eqref{Lip-Lpsi} and the monotonicity of $\{\lambda_k\}$, there exists a positive integer $k_0$ such that for any $k\geq k_0$
\begin{align}\label{prop-uc-bound-2}
\frac{\left(\frac{\nu}{2} \|\nabla_{\bm x} F_{\lambda_k}({\bm x}^k,{\bm z}^k)\|\right)^{\frac{2}{q}}}{L_{\Psi}({\bm x}^k,{\bm z}^k)+L_f} 
\geq \frac{(\nu/2)^{2/q}\zeta^{2/q}}{\varsigma \lambda_k+L_f}
\geq \frac{(\nu/2)^{2/q}\zeta^{2/q} }{\varsigma +L_f/\lambda_0}\lambda_k^{-1} =\varrho \lambda_k^{-1},
\end{align}
where the equality holds upon letting $\varrho := \frac{(\nu/2)^{2/q}\zeta^{2/q} }{\varsigma +L_f/\lambda_0}$.
Also, it holds that for all $k\geq k_0$, when $G_{{\bm z}^k,\lambda_k}({\bm x}^k)>0$, we have
    \begin{align}
    & (\cL_{\lambda_{k+1}}({\bm x}^{k+1},{\bm z}^{k+1}) -L^*)-(\cL_{\lambda_{k}}({\bm x}^{k},{\bm z}^{k}) -L^*)\notag \\
     \overset{\rm (a)}{\leq} &  - \frac{1}{4}\alpha_k G_{{\bm z}^k,\lambda_k}({\bm x}^k) + \frac{c_2}{(k+1)^{1+\tau}} +  \frac{c_3}{(k+1)^{2-\tau}}\notag\\
     \overset{\rm (b)}{\leq} & - \frac{T_k}{4}\min\bigg\{1, \frac{(G_{{\bm z}^k,\lambda_k}({\bm x}^k))^{\frac{2}{q}}(G_{{\bm z}^k,\lambda_k}({\bm x}^k))^{1-\frac{2}{q}}}{(L_{\Psi}({\bm x}^k,{\bm z}^k)+L_f)\| {\bm x}^k - {\bm v}^{k}\|^2} \bigg\} +\frac{c_2}{(k+1)^{1+\tau}} +  \frac{c_3}{(k+1)^{2-\tau}}\notag\\
    \overset{\rm (c)}{\leq} &  - \frac{T_k}{4}\min\bigg\{1, \frac{(\frac{\nu}{2} \|\nabla_{\bm x} F_{\lambda_k}({\bm x}^k,{\bm z}^k)\|)^{\frac{2}{q}}(G_{{\bm z}^k,\lambda_k}({\bm x}^k))^{1-\frac{2}{q}}}{L_{\Psi}({\bm x}^k,{\bm z}^k)+L_f} \bigg\} +\frac{c_2}{(k+1)^{1+\tau}} +  \frac{c_3}{(k+1)^{2-\tau}}\notag\\
    \overset{\rm (d)}{\leq} &  - T_k\min\{1/4, (\varrho/4){T_k}^{1-\frac{2}{q}}\lambda_k^{-1}\}+ \frac{c_2}{(k+1)^{1+\tau}}+  \frac{c_3}{(k+1)^{2-\tau}}, \notag
   \end{align}
   where (a) holds for some positive $c_2$ and $c_3$ by \eqref{prop-rate-T-ss-0} in Proposition~\ref{prop-rate-T-ss},
   (b) follows from Lemma~\ref{lemma-G-T},
   (c) follows from \eqref{lb-1} with $({\bm x}^k,{\bm z}^k,{\bm v}^k,\lambda_k)$ in place of $({\bm x},{\bm z},{\bm v}^+,\lambda)$,
  and (d) follows from \eqref{prop-uc-bound-2} and Lemma~\ref{lemma-G-T}. Notice that the above display also holds when $G_{{\bm z}^k,\lambda_k}({\bm x}^k)=0$ (with the convention $0^0 = 1$ when $q = 2$) because we have $T_k = 0$ in view of Lemma~\ref{lemma-G-T}, which implies that the above display (i.e., (a) through (d)) holds because (a) holds regardless of whether $G_{{\bm z}^k,\lambda_k}({\bm x}^k)=0$.

 From the definition of $T_k$ in \eqref{def-Tk}, we obtain from the last display that
\begin{align}
T_{k+1} \leq T_k\max\{3/4, 1- (\varrho/4) T_k^{1-\frac{2}{q}}\lambda_k^{-1}\}+c_2/(k+1)^{1+\tau}\!\!+\!c_3/(k+1)^{2-\tau}\ \ \, \forall k\ge k_0.\label{prop-uc-bound-3}
\end{align}

\begin{enumerate}[\rm (i)]
\item First, we consider $q=2$ so that $\mu := 1-2/q = 0$.  In this case, since $\lambda_k\rightarrow \infty$, by increasing $k_0$ if necessary, we assume that $1- (\varrho/4)\lambda_k^{-1} \geq 3/4$ for all $k\geq k_0$.
Then, the inequality in \eqref{prop-uc-bound-3} reduces to
\begin{align*}
	T_{k+1} \le \left(1- (\varrho/4)\lambda_k^{-1}\right) T_k
	+  c_2/(k+1)^{1+\tau}+c_3/(k+1)^{2-\tau}\ \ \ \forall k\ge k_0.
	\end{align*}
Using Proposition~\ref{prop_phi1}, we can show \eqref{prop-uc-bound-1} for the case $\mu=0$.

	\item  Now, we consider $q>2$.
    Then $\mu := 1-2/q \in (0,1)$. Using the substitution $\phi_k := (\varrho/4)^{1/\mu}\cdot T_k/\lambda_k^{1/\mu}$, we obtain from \eqref{prop-uc-bound-3} that for all $k\ge k_0$,
    \begin{align*}\label{prop-uc-bound-711}
		&\phi_{k+1} = (\varrho/4)^{1/\mu} T_{k+1}/\lambda_{k+1}^{1/\mu}\le (\varrho/4)^{1/\mu} T_{k+1}/\lambda_{k}^{1/\mu}\\
        &
        \leq (\varrho/4)^{1/\mu} T_{k}\lambda_{k}^{-1/\mu}\!\max\{3/4, 1\!-\!(\varrho/4) T_k^\mu\lambda_k^{-1}\} \\
        &+(\varrho/4)^{1/\mu} c_2 (k+1)^{-1-\tau}\lambda_k^{-1/\mu}+(\varrho/4)^{1/\mu} c_3 (k+1)^{-2+\tau}\lambda_k^{-1/\mu}\\
        & \le \phi_k \max\{3/4, 1-\phi_k^\mu\} + (\varrho/4)^{1/\mu} c_4(k+1)^{-1-\tau-\tau/\mu}+ (\varrho/4)^{1/\mu} c_5(k+1)^{-2+\tau-\tau/\mu},
	\end{align*}
    where the first inequality follows from the monotonicity of $\{\lambda_k\}$, and the last inequality holds for some $c_4\geq c_2$ and $c_5\geq c_3$ thanks to $\lambda_k=\Theta(k^{\tau})$. Also, we see that for any $\mu\in(0,1)$,
    \begin{equation*}
    \begin{cases}
    1+\tau < 2-\tau, \,\, 1+\tau+\tau/\mu \leq  1+0.5(\mu+1)/\mu \leq 1+1/\mu & \mbox{if $\tau\in (0,0.5)$}, \\
    1+\tau \geq 2-\tau, \,\, 2-\tau +\tau/\mu =  2+(1/\mu-1)\tau < 1+1/\mu & \mbox{if $\tau\in [0.5,1)$}. \\
    \end{cases}
    \end{equation*}
    Thus, Proposition~\ref{prop_phi2} is applicable in view of Remark~\ref{comment_condition_gen}. Then we have
\begin{align*}
T_k =& \mathcal{O}(\phi_k \lambda_k^{1/\mu}) = \mathcal{O}(\max\{1/k^{(1+\tau+\tau/\mu)/(1+\mu)-\tau/\mu},1/k^{(2-\tau+\tau/\mu)/(1+\mu)-\tau/\mu}\}) \\
=&\mathcal{O}(\max\{1/k^{1/(1+\mu)},1/k^{(2-2\tau)/(1+\mu)}\}).
\end{align*}  	
\end{enumerate}
This completes the proof.
\end{proof}

\begin{remark}[Improved rate to $\mathcal{O}(1/k^{2/3})$ for short stepsize]\label{remark-uc-ss-rate}
We can see that, for any $\mu\in[0,1)$, the rates for $T_k$ in \eqref{prop-uc-bound-1} are never worse than those in Proposition~\ref{prop-rate-T-ss}. In addition,  as $\mu$ approaches $1$ (i.e., the set becomes less uniformly convex), we have that $\max\{1/{k^\frac{1}{1+\mu}},1/k^{\frac{2-2\tau}{1+\mu}}\}$ becomes $\max\{\frac{1}{\sqrt{k}},\frac{1}{k^{1-\tau}}\}$, matching the rate in Proposition~\ref{prop-rate-T-ss}.

Moreover, from Theorem~\ref{lemma-fea-fun-bound}, \eqref{prop-uc-bound-1}, we deduce that
\begin{align*}
\max\{|f({\bm x}^k)  +  h({\bm x}^k) - L^*|, \|[{\bm g}({\bm x}^k)]_+\|\} 
=\mathcal{O}(\max\{1/{k^\frac{1}{1+\mu}},1/k^{\frac{2-2\tau}{1+\mu}},1/k^\tau\}).
\end{align*}
Hence, we obtain better rates compared with the rates in Theorem~\ref{the-con-ss}.
 In particular, for $\mu=0$ (i.e. $h={\rm Ind}_\mathcal{C}$ with strongly convex $\mathcal{C}$), the best rate is $\mathcal{O}(1/k^{2/3})$
by choosing $\tau = 2/3$.
\end{remark}

\section{Preliminary numerical results}\label{sec-numerics}

We test quadratically constrained quadratic program with the Birkhoff polytope constraint as follows:
\begin{equation}
\label{e:num}
\begin{aligned}
&\min_{{\bm x}\in \R^{n\times n }}\, f(\bm x)=\langle {\bm x}- {\bm b}, {\bm A}({\bm x}-{\bm b})\rangle_F + {\rm Ind}_{\mathcal{C}}({\bm x}) \\
&  \quad \mbox{s.t. $g_i({\bm x})= \langle {\bm x},{\bm Q}_i{\bm x}\rangle_F+\langle {\bm x},{\bm r}_i\rangle_F + d_i \leq 0$},  \quad i \in [m],
\end{aligned}
\end{equation}
 where $\langle\cdot,\cdot\rangle_F$ is the trace (Frobenius) inner product,
$\mathcal{C}$ is the set of doubly-stochastic $n\times n$ matrices, also known as the Birkhoff polytope defined as $\mathcal{C}:=\{ {\bm x}\in \R^{n\times n} :  {\bm x} {\bm 1} = {\bm 1}, \,{\bm x}^T {\bm 1} = {\bm 1}, \,{\bm x}\geq {\bm 0} \}$ with ${\bm 1}$ being the vector in $\R^n$ with all elements being $1$, ${\bm A}\in\R^{n\times n}$ is positive definite with eigenvalues selected uniformly at random in the range $\{1,\ldots,10\}$, $m\le n!$ and for every $i\in\{1,\ldots,m\}$, ${\bm Q_i}\in\R^{n\times n}$ is positive definite with eigenvalues being selected uniformly at random in the range $\{0.1,0.2,\ldots,0.9,1\}$, and the entries of ${\bm r}_i\in \R^{n\times n}$ are generated uniformly in $[0,1]$, and the scalar $d_i$ is selected to guarantee that:
\begin{enumerate}[\rm (i)]
\item The barycenter of $\mathcal{C}$, denoted by ${\bm c}$, resides in the relative interior of the constraint, i.e., $g_i(\bm c)< 0$ for each $i$; and
\item At least one distinct vertex of the Birkhoff polytope ${\bm p}_i\in \mathcal{C}$ is excluded from the constraint, i.e., $g_i({\bm p}_i)>0$ for each $i$. 
\end{enumerate}
Together, items (i) and (ii) above guarantee that this instance of \eqref{problem} is feasible and that each constraint is not redundant. Finally, to explore behavior where constraints may interact, the vertex of the quadratic ${\bm b} := {\bm c}+10({\bm p}_1-{\bm c})$ is outside of $\mathcal{C}$ in the direction of an excluded vertex.


We compare the performance of our Algorithm~\ref{alg} with several state-of-the-art  solvers, including the CoexDurCG in \cite{LanR21} and the powerful commercial solver Gurobi \cite{Gurobi} (with an academic license). All algorithms are implemented in Julia (version 1.12.5) on a laptop with Apple M4 processors and 16GB memory.

The parameters for CoexDurCG were selected according to \cite[Corollary~3.4]{LanR21}.
For our Algorithm~\ref{alg}, we set  $\lambda_k=(k+1)^{0.4}$, $\sigma_k = 1/(k+2)^{1+0.01}$, while  the setting for stepsize $\alpha_k$ is as follows:
\begin{itemize}
  \item  open-loop (OL):  $\alpha_k=1/(k+1)^{0.95}$;
  \item short stepsize (SS): a scaled variant of the short stepsize \eqref{short_stepsize_haha} with prefactor $\iota(k)$ is used for the first 1000 iterations (as a warm start) as follows:
  \begin{align*}
\alpha_k := \begin{cases}
0 & {\rm if }\ {G_{{\bm z}^k,\lambda_k}({\bm x})}=\|{\bm v}^k-{\bm x}^k\|=0,\\
\displaystyle \min\left\{1,\frac{\iota(k)\cdot G_{{\bm z}^k,\lambda_k}({\bm x}^k)}{(L_\Psi({\bm x}^k,{\bm z}^k)+L_f)\|{\bm v}^k-{\bm x}^k\|^2}\right\} & {\rm otherwise},
\end{cases}
\end{align*}
where $\iota(k)=1020$ if $k\leq 900$ and $\iota(k)=\max\{1020 - (k-900)\times 10.2,1\}$ if $k> 900$.\footnote{This means that $\iota(k)$ drops from 1020 for $k = 900$ to $1$ for $k = 1000$, and stays constant from then on.}
\end{itemize}
Both Algorithm~\ref{alg} and CoexDurCG start with the same initial point ${\bm x}^0=\frac{1}{n} {\bm 1}$ (the barycenter of the Birkhoff polytope) and the same initial multiplier ${\bm z}^0= [27,27]^T$; in addition, the LMO of $\mathcal{C}$ is computed using a subroutine of the solver \cite{FW.jl} based on the Hungarian algorithm \cite{Kuhn-1955}.  For the Gurobi solver, all parameters are under the default setting, such as the initial point and stopping criterion.

We compare our Algorithm~\ref{alg} with CoexDurCG via the relative primal function value measure and the relative feasibility measure,  respectively defined as
    \[
    {\rm val}_k:=\frac{|f({\bm x}^k)-f^*_{\rm es}|}{\max\{f^*_{\rm es},1\}},  \quad {\rm feas}_k:=\max\left\{\frac{\|[{\bm g}({\bm x}^k)]_+\|_{\infty}}{\max\{\|{\bm g}({\bm 0})\|_\infty,1\}},10^{-8}\right\},
    \]
    where $f^*_{\rm es}>0$ is the estimated optimal value returned by Gurobi.\footnote{Of all runs, Gurobi returned the lowest function value on an approximately-feasible point, and hence this value is used as a benchmark.}
Figure~\ref{fig:open-loop-xz} exhibits the performance of Algorithm~\ref{alg}(OL),  Algorithm~\ref{alg}(SS) and CoexDurCG in terms of ${\rm val}_k$ and ${\rm feas}_k$ over iterations.   The plots in Figure~\ref{fig:open-loop-xz} show the best-observed behavior for each respective algorithm variant. While Algorithm~\ref{alg}(OL) provided the best function value overall, Algorithm~\ref{alg}(SS) had a stable improvement of function value and the feasibility over iterations.  Overall, all algorithms were able to produce a reasonable feasible approximate solution, although CoexDurCG had the highest relative primal function value errors. 
Moreover,  from the table in Figure~\ref{fig:open-loop-xz}, both Algorithm~\ref{alg}(OL) and Algorithm~\ref{alg}(SS) are more than 4 times faster than the Gurobi solver, though Gurobi provided a slightly better function value. This shows that Algorithm~\ref{alg} is competitive when a high solution accuracy is not the primary concern.

\begin{figure}[!h]
\begin{subfigure}{.5\linewidth}
    \centering
  \begin{tikzpicture}[scale=1]
\begin{axis}[height=5.2cm,width=6.2cm, legend cell align={left},
ylabel style={yshift=-0.75em,font=\scriptsize}, 
xlabel style={yshift=0.50em,font=\scriptsize},
tick label style={font=\scriptsize, scale=0.8, transform shape},
legend
entries={Algorithm~\ref{alg} (OL),Algorithm~\ref{alg} (SS), CoexDurCG},
legend style={
   font=\scriptsize,
   at={(2.4,1.2)},
   legend columns=-1,
   column sep= 1em,
   anchor=north east,
   draw = none,
   scale=0.5,         
   transform shape,
   fill = none,
   },
ytick={0,1e-8,1e-6,1e-4,1e-2,1},
xtick={0,1000,3000,5000},
grid,
xmin =0, xmax=5000, ymin=0, ymax=1, ymode=log,
mark repeat=50]
\addplot+[thin, each nth point=3, mark=triangle, mark repeat=20, color=cb-burgundy] table[x={iter}, y={rel_primal_gap}] {FWALo-n500-5000-05-09T17-28.txt};
\addplot+[thin, mark=square, mark repeat=150, color=cb-green-sea] table[x={iter}, y={rel_primal_gap}] {FWALs-n500-5000-05-12T22-44.txt};
 \addplot+[thin,solid, mark=otimes, mark repeat=150, color=cb-salmon-pink]
 table[x={iter}, y={rel_primal_gap}] {CD-n500-5000-05-06T13-24.txt};
\end{axis}
\end{tikzpicture}
\caption{${\rm val}_k$ over iterations}
\label{fig:sub1}
\end{subfigure}
\begin{subfigure}{.5\linewidth}
    \centering
  \begin{tikzpicture}[scale=1]
\begin{axis}[height=5.2cm,width=6.2cm, legend cell align={left},
ylabel style={yshift=-0.75em,font=\scriptsize}, 
xlabel style={yshift=0.50em,font=\scriptsize},
tick label style={font=\scriptsize, scale=0.8, transform shape},
ytick={0,1e-8,1e-6,1e-4,1e-2,1},
xtick={0,1000,3000,5000},
grid,
ymode=log,
xmin =0, xmax=5000, ymin=0, ymax=10, mark repeat=50]
\addplot+[thin, each nth point=3,  mark=triangle, mark repeat=20, color=cb-burgundy] table[x={iter}, y={rel_feas}] {FWALo-n500-5000-05-09T17-28.txt};
\addplot+[thin, mark=square, mark repeat=150, color=cb-green-sea] table[x={iter}, y={rel_feas}] {FWALs-n500-5000-05-12T22-44.txt};
 \addplot+[thin,solid, mark=otimes, mark repeat=150, color=cb-salmon-pink]
 table[x={iter}, y={rel_feas}] {CD-n500-5000-05-06T13-24.txt};
\end{axis}
\end{tikzpicture}
\caption{${\rm feas}_k$ over iterations}
\label{fig:sub1}
\end{subfigure}
\begin{center}
	\begin{tabular}{cccccc}
	\hline
	Algorithms  &  iterations & objective & feasibility & ${\rm val}_{5000}$ & time (sec) \\
	\hline
Algorithm~\ref{alg} (OL)	&  5000  & 123414.30 &  0  &  $4.75\times 10^{-5}$ &  1260.93 \\
Algorithm~\ref{alg} (SS)	&  5000 & 123454.93&  0 & $ 3.77\times 10^{-4}$  &   852.82\\
CoexDurCG	& 5000 & 124580.37 &  0  & $9.45\times 10^{-3}$ & 2835.03  \\
Gurobi	&      18     &  123408.44 &  $6.87\times 10^{-13}$ & 0 & 5108.93   \\
	\hline
\end{tabular}	
\end{center}
    \caption{We set $(n,m)=(500,2)$ in \eqref{e:num}, i.e, a quarter of a million variables, two quadratic constraints and the Birkhoff polytope constraint.}
    \label{fig:open-loop-xz}
\end{figure}

\noindent {\bf Acknowledgements.} The work of the first author was partially supported  by the National Natural Science Foundation of China (12501427). The work of the second author was supported in part by the Hong Kong Research Grants Council PolyU 15300423. The work of the third author was supported by the National Science Foundation under grant DMS-253242.

\noindent {\bf Data availability.} 
The code used to generate our numerical results can be found at the following link: \url{https://github.com/zevwoodstock/CGALI}.

\noindent {\bf Competing interests.} The authors declare no competing interests.

\bibliography{references_alm_nonlinear}

\end{document}